\documentclass[a4paper,10pt]{article}
\usepackage[T1]{fontenc}
\usepackage[utf8]{inputenc}
\usepackage[english,greek,frenchb,french]{babel}
\usepackage{url,csquotes}
\usepackage[hidelinks,hyperfootnotes=false]{hyperref}
\usepackage{float}
\usepackage{enumitem}
\usepackage{amsfonts,amssymb,enumerate}
\usepackage{amsmath}
\allowdisplaybreaks[1]
\usepackage{amsthm}
\usepackage{graphicx}
\usepackage{bbm}
\usepackage{listings}
\usepackage{titling}
\usepackage{dsfont}
\usepackage{color}

\newcommand{\E}{\mathbb{E}}

\newcommand{\cE}{\mathcal{E}}
    \newcommand{\Prb}{\mathbb{P}}
	
		\newcommand{\cG}{\mathcal{G}}
			\newcommand{\cM}{\mathcal{M}}
\newcommand{\cV}{\mathcal{V}}
\newcommand{\cL}{\mathcal{L}}
\newcommand{\cI}{\mathcal{I}}
\newcommand{\cW}{\mathcal{W}}
\newcommand{\vv}{\overrightarrow{v}}

		\newcommand{\cH}{\mathcal{H}}
		\newcommand{\cP}{\mathcal{P}}
	\newcommand{\cB}{\mathcal{B}}
		\newcommand{\cF}{\mathcal{F}}
		\newcommand{\fF}{\mathfrak{F}}
	\newcommand{\sR}{\mathbb{R}}
	
		\newcommand{\sS}{\mathbb{S}}
	
\DeclareMathOperator{\Animals}{Animals}
		
			\DeclareMathOperator{\disc}{disc}

				\DeclareMathOperator{\card}{card}
\DeclareMathOperator{\e}{e}					
		\DeclareMathOperator{\dive}{div}
		
		\DeclareMathOperator{\cyl}{cyl}
		\DeclareMathOperator{\hull}{hull}
    \newcommand{\sZ}{\mathbb{Z}}
    \newcommand{\sC}{\mathcal{C}}
    \newcommand{\ep}{\varepsilon}

    \newcommand{\ind}{\mathds{1}}
 \theoremstyle{plain}  
\newtheorem{thm}{Theorem}
\newtheorem{prop}{Proposition}[section]
\theoremstyle{plain}
\newtheorem{defn}{Definition}[section]
\newtheorem{lem}{Lemma}
\newtheorem{cor}{Corollary}[section]
\newlength{\separationtitre}
\author{Barbara Dembin\thanks{LPSM UMR 8001, Université Paris Diderot, Sorbonne Paris Cité, CNRS, F-75013 Paris, France}}
\theoremstyle{remark}
\newtheorem{rk}{Remark}[section]
\date{}
\setlength{\separationtitre}{1mm}
\begin{document}
\newpage

 \selectlanguage{english}

\title{Existence of the anchored isoperimetric profile in supercritical bond percolation in dimension two and higher}

\maketitle

\textbf{Abstract}: Let $d\geq 2$. We consider an i.i.d. supercritical bond percolation on $\sZ^d$, every edge is open with a probability $p>p_c(d)$, where $p_c(d)$ denotes the critical point. We condition on the event that $0$ belongs to the infinite cluster $\sC_\infty$ and we consider connected subgraphs of $\sC_\infty$ having at most $n^d$ vertices and containing $0$. Among these subgraphs, we are interested in the ones that minimize the open edge boundary size to volume ratio. These minimizers properly rescaled converge towards a translate of a deterministic shape and their open edge boundary size to volume ratio properly rescaled converges towards a deterministic constant.
\newline

\textit{AMS 2010 subject classifications:} primary 60K35, secondary 82B43.

\textit{Keywords:} Percolation, anchored isoperimetric profile.

\section{Introduction}

Isoperimetric problems are among the oldest problems in mathematics. They consist in finding sets that maximize the volume given a constraint on the perimeter or equivalently that minimize the perimeter to volume ratio given a constraint on the volume. These problems can be formulated in the anisotropic case. Given a norm $\nu$ on $\sR^ d$ and $S$ a continuous subset of $\sR^d$, we define the tension exerted at a point $x$ in the boundary $\partial S$ of $S$ to be $\nu(n_S(x))n_S(x)$, where $n_S(x)$ is the exterior unit normal vector of $S$ at $x$. The quantity $\nu(n_S(x))$ corresponds to the intensity of the tension that is exerted at $x$. We define the surface energy of $S$ as the integral of the intensity of the surface tension $\nu(n_S(x))$ over the boundary $\partial S$. An anisotropic  isoperimetric problem consists in finding sets that minimize the surface energy to volume ratio given a constraint on the volume. To solve this problem, in \cite{wulff_cluster}, Wulff introduced through the Wulff construction a shape achieving the infimum. This shape is called the Wulff crystal, it corresponds to the unit ball for a norm built upon $\nu$. Later, Taylor proved in \cite{taylor1975} that this shape properly rescaled is the unique minimizer, up to translations and modifications on a null set, of the associated isoperimetric problem.

The study of isoperimetric problems in the discrete setting is more recent. In the continuous setting, we study the perimeter to volume ratio, in the context of graphs, the analogous problem is the study of the size of edge boundary to volume ratio. This can be encoded by the Cheeger constant. For a finite graph $\cG=(V(\cG),E(\cG))$, we define the edge boundary $\partial_\cG A$  of a subset $A$ of $V(\cG)$ as  $$\partial_\cG A =\Big\{\,e=\langle x,y\rangle \in E(\cG):x\in A,y\notin A \,\Big\}\,. $$ We denote by $\partial A$ the edge boundary of $A$ in $(\sZ^d,\E^d)$ and by $|B|$ the cardinal of the finite set $B$. The isoperimetric constant, also called Cheeger constant, is defined as
$$\varphi_\cG=\min\left\{\,\frac{|\partial_\cG A|}{|A|}\,: \, A\subset V(\cG), 0<|A|\leq \frac{|V(\cG)|}{2}\,\right\}\,.$$
This constant was introduced by Cheeger in his thesis \cite{thesis:cheeger}  in order to obtain a lower bound for the smallest eigenvalue of the Laplacian. The isoperimetric constant of a graph gives information on its geometry.

Let $d\geq 2$. We consider an i.i.d. supercritical bond percolation on $\sZ^d$, every edge is open with a probability $p>p_c(d)$, where $p_c(d)$ denotes the critical parameter for this percolation. We know that there exists almost surely a unique infinite open cluster $\sC_\infty$ \cite{Grimmett99}. In this paper, we want to study the geometry of $\sC_\infty$ through its Cheeger constant. However, if we minimize the isoperimetric ratio over all possible subgraphs of $\sC_\infty$ without any constraint on the size, one can show that $\varphi_{\sC_\infty}=0$ almost surely. For that reason, we shall minimize the isoperimetric ratio over all possible subgraphs of $\sC_\infty$ given a constraint on the size. There are several ways to do it. We can for instance study the Cheeger constant of the graph $\sC_n=\sC_\infty \cap [-n,n]^d$ or of the largest connected component \smash{$\widetilde{\sC}_n$} of $\sC_n$ for $n\geq 1$. As we have $\varphi_{\sC_\infty}=0$ almost surely, the isoperimetric constants $\varphi_{\sC_n}$ and $\varphi_{\widetilde{\sC}_n}$ go to $0$ when $n$ goes to infinity. Benjamini and Mossel \cite{benjamini2003mixing}, Mathieu and Remy \cite{mathieu2004isoperimetry}, Rau \cite{Rau2006}, Berger, Biskup, Hoffman and Kozma \cite{berger2008anomalous}, Pete \cite{ECP1390} proved that $\varphi_{\widetilde{\sC}_n}$ is of order $n^{-1}$. Roughly speaking, by analogy with the full lattice, we expect that subgraphs of \smash{$\widetilde{\sC}_n$} that minimize the isoperimetic ratio have an edge boundary size of order $n^{d-1}$ and a size of order $n^d$, this is coherent with the fact that $\varphi_{\widetilde{\sC}_n}$ is of order $n^{-1}$.  This leads Benjamini to conjecture that for $p>p_c(d)$, the limit of $n\varphi_{\widetilde{\sC}_n}$ when $n$ goes to infinity exists and is a positive deterministic constant. 

This conjecture was solved in dimension $2$ by Biskup, Louidor, Procaccia and Rosenthal in \cite{biskup2012isoperimetry} and by Gold in dimension $3$ in \cite{Gold2016}. They worked on a modified Cheeger constant. Instead of considering the open edge boundary of subgraphs within $\sC_n$, they considered the open edge boundary within the whole infinite cluster $\sC_\infty$, this is more natural because $\sC_n$ has been artificially created by restricting $\sC_\infty$ to the box $[-n,n]^d$. They also added a stronger constraint on the size of subgraphs of $\sC_n$ to ensure that minimizers do not touch the boundary of the box $[-n,n]^d$.  Moreover, the subgraphs achieving the minimum, properly rescaled, converge towards a deterministic shape that is the Wulff crystal. Namely, it is the shape solving the continuous anisotropic isoperimetric problem associated with a norm $\beta_p$ corresponding to the surface tension in the percolation setting. The quantity $n\varphi_{\sC_n}$ converges towards the solution of a continuous isoperimetric problem. 

Dealing with the isoperimetric ratio within $\sC_n$ needs to be done with caution. Indeed, we do not want minimizers to be close to the boundary of $\sC_n$ because this boundary does not exist in $\sC_\infty$. There is another way to define the Cheeger constant of $\sC_\infty$, that is more natural in the sense that we do not restrict minimizers to remain in the box $[-n,n]^d$. This is called the anchored isoperimetric profile  $\varphi_n$ and it is defined by:
$$ \varphi_n=\min\left\{\,\frac{|\partial_{\sC_\infty} H|}{|H|}: 0\in H\subset \sC_\infty,\, \text{ H connected, } 0<|H|\leq n^d\,\right\}\,, $$
 where we condition on the event $\{0\in\sC_\infty\}$.
We say that $H$ is a valid subgraph if $0\in H\subset \sC_\infty$, $H$ is connected and $|H|\leq n^d$. We also define
$$\partial ^o H=\Big\{\,e\in\partial H,\text{ $e$ is open}\,\Big\}\,.$$
Note that if $H\subset \sC_\infty$, then 
$\partial_{\sC_\infty} H=\partial^o H$. For each $n$, let $\cG_n$ be the set of the valid subgraphs that achieve the infimum in $\varphi_n$. In this context, a minimizer $G_n\in\cG_n$ can go potentially very far from $0$. The minimizer $G_n$ properly rescaled do not belong anymore to a compact set. This lack of compacity is the main issue to overcome to prove that the limit exists. It was done in dimension $2$ in \cite{biskup2012isoperimetry}, with a specific norm that cannot be extended to higher dimensions. We need to introduce some definitions to be able to define properly a limit shape in dimension $d\geq 2$. In order to build a continuous limit shape, we shall define a continuous analogue of the open edge boundary. In fact, we will see that the open edge boundary may be interpreted in term of a surface tension $\cI$, in the following sense. Given a norm $\tau$ on $\sR^d$ and a subset $E$ of $\sR^d$ having a regular boundary, we define $\cI_\tau(E)$ as 
$$\cI_\tau (E)=\int_{\partial E}\tau(n_E(x))\cH^{d-1}(dx)\,,$$
where $\cH ^{d-1}$ denotes the Hausdorff measure in dimension $d-1$.
The quantity $\cI_\tau(E)$ represents the surface tension of $E$ for the norm $\tau$. At the point $x$,  the tension has intensity $\tau(n_E(x))$ in the direction of the normal unit exterior vector $n_E(x)$. We denote by $\cL ^d$ the $d$-dimensional Lebesgue measure. We can associate with the norm $\tau$ the following isoperimetric problem:
$$\text{minimize $\frac{\cI_\tau(E)}{\cL^d(E)}$ subject to $\cL^d(E)\leq 1$}\,.$$
We use the Wulff construction to build a minimizer for this anisotropic isoperimetric problem. We define the set $\widehat{W}_\tau$ as
$$\widehat{W}_\tau=\bigcap_{v\in\sS^{d-1}}\left\{x\in\sR^d:\, x\cdot v\leq \tau(v)\right\}\,,$$
where $\cdot$ denotes the standard scalar product and $\sS^{d-1}$ is the unit sphere of $\sR^d$. The set $\widehat{W}_\tau$ is a minimizer for the isoperimetric problem associated with $\tau$. We will build in section \ref{sect3} an appropriate norm $\beta_p$ for our problem that will be directly related to the open edge boundary ratio. We define the Wulff crystal $W_p$ as the dilate of $\widehat{W}_{\beta_p}$ such that $\cL^d(W_p)=1/\theta_p$, where $\theta_p=\Prb(0\in\sC_\infty)$. 

In this paper, we adapt the proof of Gold to any dimension $d\geq 2$ to give a self-contained proof of the existence of the limit for the anchored isoperimetric profile. Note that this proof also holds in dimension $2$, it gives an alternative proof of \cite{biskup2012isoperimetry} with a simpler norm.  The aim of this paper is the proof of the two following Theorems. The first theorem asserts the existence of the limit of $n\varphi_n$. 
\begin{thm}\label{thmheart}
Let $d\geq 2$, $p>p_c(d)$ and let $\beta_p$ be the norm that will be properly defined in section \ref{sect3}. Let $W_p$ be the Wulff crystal for this norm, \textit{i.e.}, the dilate of $\widehat{W}_{\beta _p}$ such that $\cL^d(W_p)=1/{\theta_p}$. Then, conditionally on $\{0\in\sC_\infty\}$,
$$\lim_{n\rightarrow \infty} n\varphi_n=\frac{\cI_p(W_p)}{\theta_p\cL^d(W_p)}=\cI_p(W_p)\text{ a.s..}$$
\end{thm}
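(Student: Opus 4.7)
The plan is to prove the two matching almost sure inequalities
$$\limsup_{n\rightarrow\infty}n\varphi_n\leq \cI_p(W_p)\quad\text{and}\quad\liminf_{n\rightarrow\infty}n\varphi_n\geq \cI_p(W_p),$$
following the scheme of Gold \cite{Gold2016} adapted to the anchored setting in arbitrary dimension. The identity $\theta_p\cL^d(W_p)=1$ is what collapses $\cI_p(W_p)/(\theta_p\cL^d(W_p))$ to $\cI_p(W_p)$. The main structural obstacle is the absence of compactness: unlike in \cite{biskup2012isoperimetry,Gold2016}, a minimiser $G_n\in\cG_n$ is not a priori confined to a box of side $O(n)$, and the usual compactness-based passage from the discrete to the continuous isoperimetric problem does not apply directly to the rescaling $n^{-1}G_n$.

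For the upper bound I would proceed by an explicit Wulff construction. Fix $\ep>0$, set $R_n=(1-\ep)nW_p$, and build a valid subgraph $H_n\subset\sC_\infty\cap R_n$ whose open edge boundary realises, up to a negligible error, the minimal open cut separating $R_n$ from its exterior. Such a choice is possible thanks to the surface tension interpretation of $\beta_p$ developed in section \ref{sect3}: the maximal flow through a cylinder perpendicular to $v$ concentrates on $\beta_p(v)$ per unit cross-section. The density theorem for $\sC_\infty$ gives $|H_n|\approx \theta_p\cL^d(R_n)=(1-\ep)^d n^d\leq n^d$ for $n$ large, and flow concentration gives $|\partial^o H_n|\leq (1-\ep)^{d-1}n^{d-1}\cI_{\beta_p}(W_p)(1+o(1))$. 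Prepending a geodesic in $\sC_\infty$ from $0$ to $H_n$ to enforce $0\in H_n$ adds only $O(n)$ vertices and edges. Dividing, letting $\ep\to 0$, and using $\theta_p\cL^d(W_p)=1$ yields $\limsup_n n\varphi_n\leq \cI_p(W_p)$ almost surely.

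For the lower bound, let $G_n\in\cG_n$ be a minimiser and associate to it a continuous hull $\widehat G_n\subset \sR^d$ after rescaling by $n^{-1}$, chosen so that the rescaled empirical measure induced by $G_n$ is well approximated by $\theta_p\ind_{\widehat G_n}\cL^d$. The constraint $|G_n|\leq n^d$ then gives $\cL^d(\widehat G_n)\leq 1/\theta_p=\cL^d(W_p)$. The technical heart of the argument is a uniform discrete-to-continuous lower bound
$$|\partial^o G_n|\geq n^{d-1}\cI_{\beta_p}(\widehat G_n)-o(n^{d-1})\quad\text{a.s.,}$$
obtained by covering $\partial\widehat G_n$ with mesoscopic flat patches of definite normal $v$ and bounding below the number of open edges crossing each patch by the max-flow in the associated cylinder, which concentrates on $\beta_p(v)$ times the area. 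Combined with the Wulff inequality
$$\frac{\cI_{\beta_p}(E)}{\cL^d(E)^{(d-1)/d}}\geq \frac{\cI_{\beta_p}(W_p)}{\cL^d(W_p)^{(d-1)/d}}$$
applied to $E=\widehat G_n$ (valid since $\cL^d(\widehat G_n)\leq \cL^d(W_p)$), this yields
$$n\frac{|\partial^o G_n|}{|G_n|}\geq \frac{\cI_{\beta_p}(W_p)}{\theta_p\cL^d(W_p)}(1-o(1))=\cI_p(W_p)(1-o(1)).$$

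The delicate step is ensuring that the discrete-to-continuous lower bound is uniform without any a priori diameter bound on $G_n$. I would handle this in two stages. First, rule out pathological splitting of the minimiser: the strict concavity of $v\mapsto v^{(d-1)/d}$ entering the Wulff inequality forces any optimum to concentrate its volume essentially in a single macroscopic component, since splitting into several pieces would strictly increase the total surface tension for the same total volume. Second, on that essentially bounded component apply the supercritical renormalisation of \cite{Grimmett99}: at a good mesoscopic scale, uniform large-deviation bounds on local densities and on cylindrical flows allow the patchwise surface-tension comparison to be run with exponentially small error. Summability of these errors in $n$ together with Borel--Cantelli then upgrades both bounds from convergence in probability to almost sure convergence, completing the proof.
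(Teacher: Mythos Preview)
Your overall architecture---prove an upper and a lower large deviation inequality and then apply Borel--Cantelli---is exactly what the paper does: Theorem~\ref{thmheart} is deduced in one line from Theorems~\ref{ULD} and~\ref{LLD}. Your upper bound sketch is close to the paper's argument in Section~\ref{sectionULD}; note only that the paper approximates $W_p$ from the inside by a convex polytope and builds the cutset face by face via Lemma~\ref{lem2} and Theorem~\ref{upperlargedeviationcyl}, and that in its construction $0$ is automatically in $H_n$ (it is the connected component of $0$ in $\sC_\infty$ after removing the cutset), so no geodesic needs to be prepended.

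The lower bound is where your proposal has a real gap. Two ingredients that the paper works hard to supply are missing. First, you assume you can associate to $G_n$ a continuous set $\widehat G_n$ with $|\partial^o G_n|\gtrsim n^{d-1}\cI_{\beta_p}(\widehat G_n)$, but the edge boundary of $G_n$ can be extremely tangled and of order much larger than $n^{d-1}$; the paper devotes Section~\ref{sectconstruction} to Zhang's construction (smoothing the contour at a renormalised scale, filling small holes, surgically excising large holes) precisely to produce a set $P_n$ of controlled perimeter (Corollary~\ref{cor}) for which the patchwise comparison can be run. This is not a detail one can absorb into ``renormalisation of \cite{Grimmett99}''.

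Second, your localisation step is circular. You argue that strict concavity of $v\mapsto v^{(d-1)/d}$ forces the \emph{continuous} optimum to be a single piece, and then restrict attention to that piece. But $G_n$ is already connected; the problem is not splitting but that $G_n$ may wander arbitrarily far from the origin, and you cannot invoke the continuous Wulff inequality to confine the \emph{discrete} minimiser before you have established the very discrete-to-continuous comparison you are trying to prove. The paper handles this with the method of~\cite{Cerf:StFlour} (Step~(i)--(ii) of the proof of Theorem~\ref{prel}): using only the perimeter bound on $P_n$, it covers almost all of $P_n$ by a bounded number of balls of \emph{constant} radius, then cuts $P_n$ along spheres chosen so that the artificially created surface tension is negligible (inequalities~\eqref{eq3b}--\eqref{eql}). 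Only after this localisation can one run a genuine compactness argument on each ball and reduce to the rare local events $G(x,\rho,v,w,\delta_2)$. Without this mechanism your lower bound does not go through.
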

The second theorem shows that the graphs $G_n$ realizing the minimum converge in probability towards a translate of $W_p$.
\begin{thm}\label{formeGn}
Let $d\geq 2$ and $p>p_c(d)$. Let $\ep>0$. There exists positive constants $C_1$ and $C_2$ depending on $d$, $p$ and $\ep$ such that, for all $n\geq 1$,
$$\Prb\left[\max_{G_n\in\cG_n}\inf_{x\in\sR^d}\frac{1}{n^d}\big|G_n\Delta (n(x+W_p)\cap\sC_\infty)\big|\geq \ep\,\Big|\,0\in\sC_\infty\,\right ]\leq C_1\e^{-C_2n^{1-3/2d}}\,,$$
where $\Delta$ denotes the symmetric difference.
\end{thm}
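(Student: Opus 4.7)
The plan is to translate the discrete minimization problem into a continuous anisotropic isoperimetric problem, then combine Theorem \ref{thmheart} with a quantitative form of Wulff's uniqueness theorem to upgrade scalar convergence of the isoperimetric constant into set convergence. Given a minimizer $G_n\in\cG_n$, I associate at scale $1/n$ a continuous set $E_n\subset\sR^d$ obtained, roughly speaking, as the union of the cubes $(1/n)(x+[-1/2,1/2]^d)$ for $x\in G_n$, with volume $\cL^d(E_n)\approx|G_n|/(\theta_p n^d)$. Here I implicitly use that $\sC_\infty$ has density $\theta_p$ and that, with high probability, $G_n$ is locally dense in the region it occupies, so the empirical measure on $(1/n)G_n$ is close to $\theta_p\cL^d$ restricted to $E_n$.

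The probabilistic heart of the argument is that, with probability at least $1-C_1\e^{-C_2n^{1-3/2d}}$, the open edge boundary $|\partial^o G_n|$ differs from $n^{d-1}\cI_p(E_n)$ by at most $\eta(\ep)n^{d-1}$, where I write $\cI_p=\cI_{\beta_p}$. This is proved by covering $\partial E_n$ with small cubes at a well-chosen mesoscopic scale and applying the concentration estimates of Section \ref{sect3} for the minimal open edge cut across each such cube, the exponent $1-3/(2d)$ being dictated by the optimization over the mesoscopic scale. Combined with Theorem \ref{thmheart} and the constraint $\cL^d(E_n)\leq\cL^d(W_p)=1/\theta_p$, the anisotropic isoperimetric inequality
$$\cI_p(E)\geq\cI_p(W_p)\left(\frac{\cL^d(E)}{\cL^d(W_p)}\right)^{(d-1)/d}$$
forces, on the good event, both $\cL^d(E_n)\to\cL^d(W_p)$ and $\cI_p(E_n)\to\cI_p(W_p)$. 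A quantitative anisotropic isoperimetric inequality (of Figalli-Maggi-Pratelli type, $\cI_p(E)-\cI_p(W_p)\geq c(d,p)\,\inf_x\cL^d(E\Delta(x+W_p))^2$ for $E$ of volume $\cL^d(W_p)$) then produces $x_n\in\sR^d$ with $\cL^d(E_n\Delta(x_n+W_p))\leq\ep/2$, and undoing the discrete-to-continuous transfer yields the stated bound with translate $nx_n$.

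The main obstacle, absent in the fixed-box setting of $\widetilde{\sC}_n$ treated in \cite{biskup2012isoperimetry} and \cite{Gold2016}, is the lack of \emph{a priori} compactness already flagged in the introduction: without extra control, $G_n$ could sprawl over a region of diameter much larger than $n$, in which case $E_n$ would fail to be precompact and the stability argument would collapse. This must be addressed first: up to events of probability at most $C_1\e^{-C_2 n^{1-3/2d}}$, every minimizer must be contained in the ball of radius $K(\ep)n$ around the origin. The underlying geometric idea is that a connected set of given volume with large diameter carries disproportionately large boundary, so a sprawling candidate minimizer can always be compared unfavorably to a more compact competitor obtained by local surgery and by refilling the removed vertices in a bulky region of $\sC_\infty$ nearby. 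Making this quantitative, and interleaving it with the mesoscopic concentration estimates that fix the exponential rate, is the most delicate ingredient and constitutes the main adaptation of Gold's argument to arbitrary dimension.
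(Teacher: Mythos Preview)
Your outline has two genuine gaps that the paper's proof handles quite differently.

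First, the naive continuous set $E_n=\bigcup_{x\in G_n}(1/n)(x+[-1/2,1/2]^d)$ is explicitly rejected in the paper (Section~\ref{sectconstruction}): the full edge boundary $\partial G_n$ can be far larger than $|\partial^o G_n|$ because of the many \emph{closed} edges on $\partial G_n$, so $\cP(E_n)$ and hence $\cI_p(E_n)$ need not be of order $n^{d-1}$ at all. Your claim that $|\partial^o G_n|$ and $n^{d-1}\cI_p(E_n)$ differ by at most $\eta(\ep)n^{d-1}$ therefore fails for this $E_n$. The paper invests all of Section~\ref{sectconstruction} in building a different continuous object $P_n$ via Zhang's renormalized cutset construction, filling small holes of $G_n$ and cutting out the large ones along smooth contours, precisely so that $\cP(P_n)\le\beta$ with the right probability (Proposition~\ref{controlper} and Corollary~\ref{cor}). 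Even then one does not get a two-sided comparison $|\partial^o G_n|\approx n^{d-1}\cI_p(P_n)$; one only controls things in the direction needed, and the link to the surface tension is made locally through the events $G(x,r,v,w,\delta_2)$ and the lower large deviation estimates of \cite{CerfTheret09infc}, not by a global mesoscopic covering of a random boundary $\partial E_n$.

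Second, your compactness step asserts that with high probability $G_n\subset B(0,K(\ep)n)$. The paper neither proves nor uses this; it is not clear that such a containment holds, and the surgery sketch you give (``a sprawling candidate can be compared unfavorably to a more compact competitor'') is exactly the step whose failure motivates the paper's entire strategy. Instead, the paper localizes the mass of $P_n$ in a \emph{bounded number} of disjoint balls of bounded radius (in the rescaled picture), up to a set of volume $\le\eta$, via the covering argument of Chapter~17 in \cite{Cerf:StFlour}; these balls may be very far apart. All subsequent estimates are done ball by ball, conditioning on the random set $E(X)$ of centers and radii, and the surface tension created by cutting $\overline P_n$ along the ball boundaries is controlled separately (inequalities \eqref{eq3b}--\eqref{eql}). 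This is the ``main difficulty'' flagged in the introduction, and it is not bypassed by a containment lemma.

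Finally, note the logical order: Theorem~\ref{thmheart} is a \emph{consequence} of Theorems~\ref{ULD} and~\ref{LLD}, and the latter already rests on Theorem~\ref{prel}. The paper derives Theorem~\ref{formeGn} directly from Theorem~\ref{prel} by choosing tailored test functions $f_i,g_i$ (translates of smoothed indicators of $W_p$ and its complement) so that $\sup_{h\in\fF}|\mu_n(h)-\nu_{z_i+W_p}(h)|$ dominates the rescaled symmetric difference; no quantitative Figalli--Maggi--Pratelli inequality is needed, only the soft stability statement that $\cI_p(E)\ge(1+c(\ep))\cI_p(rW_p)$ once $E$ is $\ep$-far from every translate of $rW_p$ in $L^1$.
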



To prove Theorem \ref{thmheart}, we first prove a large deviations result from above for $n\varphi_n$ stated in the following Theorem.
\begin{thm}\label{ULD}
Let $d\geq 2$. Let $p>p_c(d)$. For all $\ep>0$, there exist positive constants $C_1$ and $C_2$ depending on $p$, $d$, $\ep$ such that, for all $n\geq 1$,
$$\Prb\left[n\varphi_n \geq (1+\ep) \dfrac{\cI_p(W_p)}{\theta_p(d)\cL^d(W_p)}\,\Big |\,0\in\sC_\infty\right] \leq C_1\exp(-C_2n)\,.$$
\end{thm}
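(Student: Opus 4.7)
This is an upper large-deviation bound on $\varphi_n$, so it suffices to exhibit, with probability at least $1-C_1\e^{-C_2n}$ on $\{0\in\sC_\infty\}$, a valid subgraph $H$ of $\sC_\infty$ containing $0$, with $|H|\leq n^d$ and $|\partial^o H|/|H|\leq (1+\ep)\cI_p(W_p)/n$. The natural candidate is (essentially) $\sC_\infty$ intersected with a slight shrinkage of a translate of $nW_p$, chosen so that $0$ lies in the interior.

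\textbf{Construction and estimates.} Fix $\delta>0$ small (to be tuned to $\ep$) and let $K$ be a translate of $(1-\delta)W_p$ such that $0$ lies in the interior of $nK$ for all $n\geq n_0(\delta)$; note $\cL^d(nK)=(1-\delta)^dn^d/\theta_p$. Let $H_n$ be the unique giant connected component of $\sC_\infty\cap nK$. By Pisztora-type renormalisation in the supercritical phase, $0$ belongs to $H_n$ with probability at least $1-C_1\e^{-C_2n}$ given $\{0\in\sC_\infty\}$. A Chernoff-type concentration of $\sum_{x\in nK\cap\sZ^d}\ind\{x\in\sC_\infty\}$, combined with the exponential tail on finite-cluster sizes, yields
$$(1-\delta)^{d+1}n^d \;\leq\; |H_n| \;\leq\; (1+\delta)(1-\delta)^{d}n^d \;\leq\; n^d$$
with exponentially high probability, the last inequality holding for $\delta$ small since $(1+\delta)(1-\delta)^d\leq 1$ when $d\geq 2$. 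For the boundary, approximate $\partial K$ from outside by a finite polyhedron $P$ with $\cI_p(P)\leq (1+\delta)\cI_p(K)$; partition $\partial^o H_n$ according to which face of $nP$ each edge crosses, and apply the exponential concentration of the number of open edges crossing each face around its predicted surface-tension value $n^{d-1}\beta_p(\nu)\cH^{d-1}(\text{face})$. This concentration is exactly what is built into the construction of $\beta_p$ in Section~\ref{sect3}. One also controls, via the exponential decay of finite-cluster sizes, the additional edges of $\partial^o H_n$ incident to ``island'' vertices of $\sC_\infty\cap nK$ not in $H_n$: these contribute only $o(n^{d-1})$. Summing, one obtains
$$|\partial^o H_n|\;\leq\; (1+\delta)^2\cI_p(nK) \;=\; (1+\delta)^2(1-\delta)^{d-1}n^{d-1}\cI_p(W_p).$$

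\textbf{Conclusion and main obstacle.} Combining the two estimates,
$$n\varphi_n \;\leq\; \frac{n|\partial^o H_n|}{|H_n|} \;\leq\; \frac{(1+\delta)^2}{(1-\delta)^2}\,\cI_p(W_p),$$
and the right-hand side is at most $(1+\ep)\cI_p(W_p) = (1+\ep)\cI_p(W_p)/(\theta_p\cL^d(W_p))$ provided $\delta$ is chosen small enough in terms of $\ep$; conditioning on $\{0\in\sC_\infty\}$ only costs a multiplicative factor $1/\theta_p>0$, preserving the exponential rate. The main technical obstacle is the boundary estimate. It rests on two ingredients to be developed in Section~\ref{sect3}: a polyhedral approximation of $W_p$ with arbitrarily small loss in $\cI_p$, and an exponentially fast large-deviation upper bound on the number of open edges of $\sC_\infty$ crossing a large slab in direction $\nu$, around $n^{d-1}\beta_p(\nu)$ times the slab's area. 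The remaining inputs (uniqueness of the crossing cluster, volume concentration, control of small finite clusters) are by now standard in supercritical Bernoulli percolation.
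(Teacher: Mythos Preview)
Your overall strategy---exhibit a valid subgraph whose boundary-to-volume ratio is close to optimal---is correct, and the volume estimate via renormalisation is essentially what the paper does. But there is a genuine gap in your boundary estimate.

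You take $H_n$ to be the giant component of $\sC_\infty\cap nK$ and assert that $|\partial^o H_n|$ concentrates around $n^{d-1}\cI_p(K)$. This is not true. The edges of $\partial^o H_n$ are exactly the open edges crossing $\partial(nK)$ whose inner endpoint lies in $H_n$; their density per unit boundary area is governed by $\Pr[e\text{ open},\ \text{inner endpoint}\in\sC_\infty]$, and there is no reason this equals $\beta_p(v)$. The quantity $\beta_p(v)$ constructed in Section~\ref{sect3} is the \emph{minimal} cutset capacity per unit area---realised by a random, non-flat interface that seeks out closed edges---not the open-edge density across a deterministic flat hypersurface. Generically the latter is strictly larger: for $p$ just above $p_c$, the flat cut still has open-edge density of order $p\theta_p$, whereas $\beta_p(v)\to 0$. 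Your candidate $H_n$ therefore has too large an open edge boundary, and the resulting bound on $n\varphi_n$ does not reach $(1+\ep)\cI_p(W_p)$. The sentence ``this concentration is exactly what is built into the construction of $\beta_p$'' conflates $\tau_p(nA,h)$, a minimal-cut quantity, with the raw count of open edges crossing $nA$.

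The paper repairs this by reversing the order of construction. One first approximates $W_p$ from the inside by a convex polytope $P$ and builds a near-optimal cutset $\Gamma_n$ separating $nP$ from infinity: for each face $F_i$ one takes a minimal cutset $E'_i$ realising $\tau_p(n(F_i+\delta v_i),\delta n)$ in a thin slab over the face, and then adds $O(\delta^{d-1}n^{d-1})$ bridging edges at the seams between adjacent slabs. Only \emph{then} does one define $H_n$ as the set of vertices connected to $0$ by open paths avoiding $\Gamma_n$. By construction every edge of $\partial^o H_n$ lies in $\Gamma_n$, so $|\partial^o H_n|\le|\Gamma_n|_o$, and Lemma~\ref{lem2} together with Theorem~\ref{upperlargedeviationcyl} yields $|\Gamma_n|_o\le(1+\delta)\cI_p(P)n^{d-1}$ except with probability $C_1\exp(-C_2 n^d)$. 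The volume control of $H_n$ is then carried out by a good-cube renormalisation just as you outline.
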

The proof of Theorem \ref{ULD} is inspired from the proof of Theorem 5.4 in \cite{Gold2016}. We shall build a valid subgraph that has an isoperimetric ratio close to $\varphi_n$. In order to do so, we approximate the Wulff shape $W_p$ from the inside by a convex polytope $P$. We shall build a cutset $\Gamma_n$ that cuts $nP$ from infinity whose number of open edges is close to $n^{d-1}\cI_p(P)$ with high probability.  For each face $F$ of $P$ and $v$ its associated exterior unit normal vector, we consider the cylinder $\cyl(F+\ep v,\ep)$ of basis $F+\ep v$ and of height $\ep>0$. We build $E$ by merging the cutsets from the top to the bottom of minimal capacity of the cylinders $\cyl(F+\ep v,\ep)$. The union of these cutsets is not yet a cutset itself because of the potential holes between these cutsets. We fix this issue by adding extra edges to fill the holes. We control next the number of extra edges we have added. We also need to control the capacity of the cutsets in a cylinder of polyhedral basis. We next build a valid subgraph $H_n\subset \sZ^d$ from $\Gamma_n$ by taking all the vertices of $\sC_\infty \cap nP$ that are connected to $0$ without using edges in $\Gamma_n$. We expect that $|H_n|$ is of order $\theta_p(d)n^d\cL^d(P)$. We can bound from above $|\partial_{\sC_\infty}H_n|$ thanks to the number of open edges in $\Gamma_n$ and so we control its isoperimetric ratio. Finally, we control the upper large deviations for this number of open edges thanks to the upper large deviations for the flow in a cylinder of polyhedral basis. 
The next step is to obtain the large deviations result from below.
\begin{thm}\label{LLD}
Let $d\geq 2$. Let $p>p_c(d)$. For all $\ep>0$, there exist positive constants $C_1$ and $C_2$ depending on $p$, $d$, $\ep$ such that, for all $n\geq 1$,
$$\Prb\left[n\varphi_n \leq (1-\ep) \dfrac{\cI_p(W_p)}{\theta_p(d)\cL^d(W_p)}\,\Big |\,0\in\sC_\infty\right] \leq C_1\exp(-C_2n^{1-3/2d})\,.$$
\end{thm}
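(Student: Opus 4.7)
The plan is to proceed by contradiction: assume that with probability at least $C_1\exp(-C_2 n^{1-3/2d})$ there exists a valid subgraph $G$ with $|\partial^o G|/|G|\leq(1-\ep)\cI_p(W_p)/(n\theta_p\cL^d(W_p))$, and then associate with $G$ a Borel set $E\subset\sR^d$ of finite perimeter such that $\cL^d(E)\leq\cL^d(W_p)$ and $\cI_p(E)\leq (1+o(1))|\partial^o G|/n^{d-1}$. Taylor's theorem identifies the Wulff crystal $W_p$ as the unique minimizer, up to translations and Lebesgue-null modifications, of $\cI_p(\cdot)/\cL^d(\cdot)^{(d-1)/d}$, so any such $E$ satisfies $\cI_p(E)\geq\cI_p(W_p)(\cL^d(E)/\cL^d(W_p))^{(d-1)/d}$. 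Combined with the volume comparison $|G|\approx \theta_p n^d\cL^d(E)$, this will contradict the hypothesis on $G$.

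To construct $E$, I would discretize at a mesoscopic scale $m=m(n)$ to be optimized at the end. Tile $\sZ^d$ by boxes of side $m$ and call such a box \textit{good} when the number of $\sC_\infty$-vertices inside is within a small relative fraction of $\theta_p m^d$ and when the truncated cluster inside $B$ behaves regularly enough in the sense of a Pisztora-type renormalisation. A box is good except with probability at most $\exp(-cm)$. Define $E$ as the rescaled union $(1/n)\bigcup B$, where $B$ ranges over good boxes that contain a macroscopic fraction of the vertices of $G$. The density estimate and a union bound then give $|G|=\theta_p n^d\cL^d(E)(1+o(1))$ and $\cL^d(E)\leq 1/\theta_p=\cL^d(W_p)$ with the desired stretched-exponential probability.

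The critical step is to bound $\cI_p(E)$ from above by $|\partial^o G|/n^{d-1}$, using the interpretation of $\beta_p$ as a surface tension via minimal open-edge cuts in flat cylinders, which is built into the definition of $\beta_p$ in Section \ref{sect3}. On each face of the mesoscopic polyhedral approximation of $\partial E$, of unit normal $v$, the edges of $\partial^o G$ intersecting the associated discrete cylinder of height of order $m$ form a cut of that cylinder; the lower large deviations for the capacity of such a cylinder, analogous to the flow estimates already invoked to prove Theorem \ref{ULD}, show that this cut contains at least $(1-o(1))\beta_p(v)$ open edges per unit of discrete face area. Summing over all faces of the mesoscopic polyhedral approximation of $\partial E$ yields $|\partial^o G|/n^{d-1}\geq(1-o(1))\cI_p(E)$.

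The main obstacle is the lack of compactness: since $G$ is only anchored at $0$ and allowed to have volume up to $n^d$, the set $E$ can a priori be spread over an unbounded region of $\sR^d$, which prevents a direct appeal to BV-compactness of sets of finite perimeter. I would circumvent this by a localisation argument exploiting the connectedness of $G$: after coarse-graining, $E$ is a finite union of connected components of controlled total volume, and discarding any component not contributing to the connection to $0$ only decreases the ratio $\cI_p(E)/\cL^d(E)$, so one may assume $E$ is contained in a ball of radius depending only on $\ep$ and $p$. Optimizing $m$ between the polynomial number $(n/m)^d$ of boxes to control simultaneously and the stretched-exponential cost of the bad mesoscopic events for each, together with the lower large-deviation rate per face for the minimal cut, then yields the claimed rate $\exp(-C_2 n^{1-3/2d})$.
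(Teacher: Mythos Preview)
Your high-level picture is right: one wants to associate to a minimizer $G_n$ a continuous set $E$ of finite perimeter, compare $|G_n|$ with $\theta_p n^d\cL^d(E)$, bound $n^{-(d-1)}|\partial^o G_n|$ from below by something close to $\cI_p(E)$, and conclude via the Wulff isoperimetric inequality. But the proposal has a genuine gap at the ``critical step'' and two secondary problems.

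\textbf{Main gap: $\partial^o G$ restricted to a cylinder is not a cut.} You assert that on each face of $\partial E$ with normal $v$, the edges of $\partial^o G$ inside the associated cylinder ``form a cut of that cylinder''. This is false in general. The set $\partial G$ separates $G$ from $\sZ^d\setminus G$, but your full/empty dichotomy only says that \emph{most} $\sC_\infty$-vertices on one side lie in $G$ and \emph{most} on the other side do not; there may be thin strands of $G$ reaching into the empty box and vice versa, so $\partial G\cap\cyl$ need not disconnect the two ends. To get an honest cutset one must surgically add edges to sever these strands and then control the number of open edges among the added edges. The paper (following \cite{CerfTheret09infc}, Section~6) does exactly this: it encodes the situation as the event $G(x,\rho,v,w,\delta_2)$ that there exists $U\subset nB\cap\sZ^d$ with $|U\Delta (nB^-\cap\sZ^d)|$ small and $|\partial^o G_n\cap nB|$ small, and then shows this event is rare by integrating over heights to find a slice where the symmetric difference is thin, adding a full layer of edges there, and comparing with the lower large deviations of $\tau_p$ in a genuine cylinder. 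This surgery is not a technicality you can absorb into ``$(1-o(1))$''; without it the inequality $|\partial^o G|\ge (1-o(1))\cI_p(E)n^{d-1}$ simply does not follow.

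\textbf{The localisation step is incorrect as stated.} Discarding a connected component $E_i$ of $E$ changes $\cI_p(E)/\cL^d(E)$ to $(\cI_p(E)-\cI_p(E_i))/(\cL^d(E)-\cL^d(E_i))$, which can move in either direction; connectedness of $G$ gives no control here because thin tendrils of $G$ linking distant regions need not fill any mesoscopic box. The paper handles the lack of compactness quite differently: it first covers $P_n$ up to small volume by finitely many balls of bounded radius (Step~(i) of the proof of Theorem~\ref{prel}), then uses compactness of $\{F\subset\mathring B(0,r):\cP(F,\mathring B(0,r))\le\beta\}$ in $L^1$ to reduce to finitely many model shapes, and finally cuts along a well-chosen sphere where the created surface tension is negligible (inequalities~\eqref{eq1}--\eqref{eql}).

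\textbf{The rate $n^{1-3/2d}$ does not come from optimising a box scale.} In the paper this exponent is produced much earlier, in the construction of the continuous object $P_n$: one must first smooth $\partial G_n$ via Zhang's construction, and the rate $n^{1-3/2d}$ arises from the bound $m(G_n)\le c\,n^{d-2+3/2d}$ on the number of large holes of $G_n$ (Proposition~\ref{propcompcon}) together with the Peierls argument controlling $|\widetilde\Gamma_n|$ (Proposition~\ref{controlper}). The local flow estimates you invoke actually give the much better rate $n^{d-1}$; the bottleneck is the coarse-graining needed to pass from $G_n$ to a set of finite perimeter, which your sketch skips entirely by defining $E$ as a union of boxes without first controlling how tangled $\partial G_n$ can be.

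In short, the paper's route is: Zhang's construction $\Rightarrow$ $P_n$ with $\cP(P_n)\le\beta$ $\Rightarrow$ localisation in finitely many balls $\Rightarrow$ $L^1$-compactness $\Rightarrow$ Vitali-type covering of $\partial F$ by small balls $\Rightarrow$ the rare event $G(x,\rho,v,w,\delta_2)$ $\Rightarrow$ lower deviations for $\tau_p$. Your sketch tries to short-circuit the first four steps and jump directly to a comparison with $\tau_p$, but the missing surgery is precisely what makes that comparison valid.
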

\begin{rk} The deviation order in Theorem \ref{LLD} is not optimal due to technical details of the proof. In this work we do not make any attempt to get the proper order of deviation. Our aim is mainly to obtain Theorems \ref{thmheart} and \ref{formeGn}. The study of the large deviations order would be an interesting problem in itself.
\end{rk}
\noindent Theorem \ref{thmheart} follows from Theorem \ref{ULD} and Theorem \ref{LLD} by a straightforward application of the Borel-Cantelli Lemma. Proving the large deviations result from below is the most difficult part of this work.  To be able to compare discrete objects with continuous ones, we shall encode each optimizer $G_n\in\cG_n$ as a measure $\mu_n$ defined as
$$\mu_n=\frac{1}{n^d}\sum_{x\in V(G_n)}\delta _{x/n}\,.$$ We first need to build from a minimizer $G_n$ an appropriate continuous object $P_n$. To do so, we use the same method as in \cite{Gold2016}. The main issue is that the boundary of $G_n$ may be very tangled, we will have to build a smoother boundary of size of order $n^{d-1}$. This will enable us to build a continuous object $P_n$ of finite perimeter such that, with high probability, its associated measure is close to $\mu_n$ in some sense to be specified later.

Let $F$ be a subset of $\sR^d$. We define its associated measure $\nu_F$:
$$\forall E\in\cB(\sR^d),\qquad \nu_F(E)=\theta_p\cL^d(F\cap E)\,.$$
We now define the set $\cW$ of the measures associated with the translates of the Wulff shape as
$$\cW=\big\{\,\nu_{x+W_p}:\,x\in\sR^d\,\big\}\,.$$
Note that $\mu_n$ belongs to $\cM(\sR^d)$, the set of finite measures on $\sR^d$. We cannot use a metric as in \cite{Gold2016} where $\mu_n$ was a measure on $[-1,1]^d$. In fact, we will not use a metric here.
We first show that all the minimizers $G_n\in\cG_n$ are with high probability in a local neighborhood of $\cW$ for a weak topology. This is the key step before proving Theorem \ref{LLD}.
\begin{thm}\label{prel}Let $d\geq 2$ and $p>p_c(d)$.  Let $u:]0,+\infty[\rightarrow ]0,+\infty[$ be a non-decreasing function such that $\lim_{t\rightarrow 0}u(t)=0$. For all $\zeta>0$, there exist positive constants $C_1$ and $C_2$ depending on $d$, $p$, $u$ and $\zeta$ such that for all $n\geq 1$, for any finite set $\fF_n$ of uniformly continuous functions that satisfies: 
$$ \forall f \in\fF_n \quad\|f\|_\infty\leq 1\qquad\text{ and }\qquad\forall x,y\in\sR^d\quad |f(x)-f(y)|\leq u(\|x-y\|_2)\,,$$ we have
$$\Prb\left[\exists G_n\in\cG_n, \,\forall \nu \in \cW, \,\sup_{f\in\fF_n}|\mu_n(f)-\nu(f)|>\zeta \,\Big |0\in\sC_\infty\right]\leq C_1 \e ^{-C_2n^{1-3/2d}}\,.$$
\end{thm}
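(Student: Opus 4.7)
The plan is to adapt the compactness-and-stability scheme of \cite{Gold2016} to the non-compact anchored setting. On a high-probability event, Theorem \ref{ULD} forces every minimizer $G_n$ to have near-optimal isoperimetric ratio; to $G_n$ I associate a continuum set $P_n \subset \sR^d$ of finite perimeter whose $\beta_p$-surface energy $\cI_p(P_n)$ is near-optimal and whose associated measure $\nu_{P_n}$ is close to $\mu_n$ when tested against $1$-bounded uniformly continuous functions with modulus $u$. A quantitative form of Taylor's Wulff uniqueness theorem then identifies $P_n$ with a translate of $W_p$ up to a small $L^1$ error, and this transfers to the weak closeness against each $f \in \fF_n$.

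\emph{Step 1: near-optimality and spatial localization.} Theorem \ref{ULD} gives $n\varphi_n \leq (1+\delta)\cI_p(W_p)$ outside an event of probability $C_1 e^{-C_2 n}$, for any prescribed $\delta>0$. On this event every $G_n \in \cG_n$ satisfies $|\partial^o G_n|/|G_n| \leq (1+\delta)\cI_p(W_p)/n$ and, via the continuum anisotropic isoperimetric bound for volume constraint, $|G_n|/n^d$ is forced close to $1$ (otherwise the ratio would blow up by a factor $(n^d/|G_n|)^{1/d}$). Since the ambient box of \cite{Gold2016} is absent, I must also show that $G_n$ is essentially contained in a ball of radius $O(n)$ around $0$: because $\beta_p$ is bounded below on $\sS^{d-1}$ (Section \ref{sect3}) and $G_n$ is connected with $0 \in G_n$, any configuration extending to scale $\gg n$ in some direction would produce surface energy strictly above $(1+\delta)\cI_p(W_p)$, contradicting near-optimality. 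This spatial confinement replaces the compactness that came for free in \cite{biskup2012isoperimetry, Gold2016}.

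\emph{Step 2: continuum approximation.} At a mesoscopic scale $r_n$ of order $n^{3/2d}$ I partition $\sZ^d$ into $r_n$-boxes and classify each as \emph{good} or \emph{bad} according to standard renormalization events (unique large crossing cluster, density of $\sC_\infty$ within $\theta_p \pm \delta$, good control of local minimal cuts and surface tension). A bad box has probability at most $e^{-c r_n}$; a Peierls-type argument over the boundary of the bad-box cluster then yields the tail $C_1 e^{-C_2 n^{1-3/2d}}$, which fixes the rate in the statement. From $G_n$ and the good/bad classification I construct $P_n \subset \sR^d$ by coarse-graining $G_n$ at scale $r_n$, interpolating through bad boxes and smoothing the discrete boundary, so that on the good event: $\cI_p(P_n) \leq (1+\delta')\cI_p(W_p)$, $\cL^d(P_n)$ is within $\delta'$ of $1/\theta_p$, and for every $f$ with $\|f\|_\infty \leq 1$ and modulus $u$, $|\mu_n(f) - \nu_{P_n}(f)| \leq \eta(\delta', u, r_n/n)$ with $\eta \to 0$ as $\delta' \to 0$ and $r_n/n \to 0$. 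The uniform modulus $u$ enters precisely here, to exchange points at distance $\leq r_n/n$ uniformly in $f \in \fF_n$.

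\emph{Step 3: Wulff stability, conclusion, and main obstacle.} By Taylor's uniqueness theorem \cite{taylor1975} together with a quantitative anisotropic isoperimetric inequality, there exists $x_n \in \sR^d$ with $\cL^d(P_n \Delta (x_n + W_p)) \leq \omega(\delta')$ and $\omega(\delta') \to 0$. Hence $|\nu_{P_n}(f) - \nu_{x_n + W_p}(f)| \leq \theta_p \,\omega(\delta')$ for every $\|f\|_\infty \leq 1$; combining with Step 2 yields $|\mu_n(f) - \nu_{x_n + W_p}(f)| \leq \zeta$ for all $f \in \fF_n$ once $\delta,\delta'$ are chosen small enough in terms of $\zeta$ and $u$, with $\nu_{x_n + W_p} \in \cW$. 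Finiteness of $\fF_n$ only enters through a harmless union bound. I expect the main obstacle to be Step 1: turning the qualitative positivity of $\beta_p$ into a quantitative localization statement that is uniform in the location of $G_n$ within $\sZ^d$ and does not cost additional probabilistic tails; once $G_n$ is pinned to a ball of radius $O(n)$ around $0$, Steps 2 and 3 follow the scheme of \cite{Gold2016} with only cosmetic changes.
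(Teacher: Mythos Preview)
Your plan diverges from the paper's route in two places, and in one of them there is a genuine gap.

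\textbf{Step 1 versus the paper.} You propose to confine every minimizer $G_n$ to a single ball of radius $O(n)$ around $0$, using positivity of $\beta_p$. The paper does \emph{not} attempt this. Instead it follows the covering scheme of \cite{Cerf:StFlour}: from the perimeter bound $\cP(P_n)\leq\beta$ it extracts a \emph{random finite} family of unit balls $B(y_i,r_i)$ (centers anywhere in a polynomially large region) carrying all but an $\eta$-fraction of $\cL^d(P_n)$, then sums over the possible positions of these balls. Your single-ball localization is not obviously false, but note that the bound you would need is on the \emph{open} edge boundary, not the full edge boundary; a long filament of $G_n$ inside $\sC_\infty$ need not contribute proportionally many open boundary edges. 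You correctly flag this as the hardest step, but you should be aware that the paper sidesteps it entirely.

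\textbf{Step 2: the real gap.} You assert that the coarse-grained set $P_n$ satisfies $\cI_p(P_n)\leq(1+\delta')\cI_p(W_p)$ on the good event. This is the crux, and it is not available. The construction in Section~\ref{sectconstruction} yields only $\cP(P_n)\leq\beta$ for some absolute constant $\beta$ (Corollary~\ref{cor}); there is no claim that the $\beta_p$-surface energy of $P_n$ is close to $n^{-(d-1)}|\partial^o G_n|$, and in fact the smoothed boundary $\widetilde{\Gamma}_n$ can contain far more edges than $|\partial^o G_n|$. The paper therefore never reduces to a continuum isoperimetric problem for $P_n$. Instead, after the ball covering and a compactness argument in each $\sC_\beta^{(r)}$, it compares $|\partial^o G_n|$ \emph{directly} to a sum of local surface tensions $\sum_k\sum_{i}\alpha_{d-1}\rho_i^{d-1}\beta_p(v_i)$ via Lemma~\ref{lemball}, and then invokes the lower large deviation estimates for the flow in small balls (the events $G(x,\rho,v,w,\delta_2)$ from \cite{CerfTheret09infc}). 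Quantitative Wulff stability is used, but only on the auxiliary continuum set $\overline{F}$ obtained after cutting along spheres where little mass sits; the link back to $|\partial^o G_n|$ is made locally, not through $\cI_p(P_n)$.

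In short: your Step 3 is morally right, but the input it needs from Step 2---near-optimal continuum surface energy of $P_n$---is precisely what the paper cannot and does not prove. The paper replaces that missing global estimate by a local discrete-to-continuous comparison, and this is where the exponent $n^{1-3/2d}$ (coming from Propositions~\ref{propcompcon} and~\ref{controlper}) actually enters.
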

The main difficulty of this paper lies in the proof of this theorem. In our context, an issue that was not present in \cite{Gold2016} arises. Whereas the support of the measure $\mu_n$ was included in a fixed compact set in \cite{Gold2016}, this is not the case here because we do not constrain $G_n\in\cG_n$ to remain in the box $[-n,n]^d$. To fix this issue, we will use the method developed in \cite{Cerf:StFlour}. We will first localize the set $G_n$ in a finite number of balls of radius of order $n$ up to a set of small fractional volume. We will study $G_n$ only inside these balls, \textit{i.e.}, the intersection of $G_n$ with these balls. The intersection of $G_n$ with the boundary of these balls will create an additional surface tension. However, this surface tension is not related to the open boundary edges of $G_n$ but to the fact that we have cut $G_n$ along these boundaries. Therefore, we should not take this surface tension into account for the isoperimetric constant. In fact, we will cut $G_n$ in such a way to ensure that we do not create too much surface tension, \textit{i.e.}, we will cut in regions where $G_n$ is not concentrated. To conclude, we will link the probability that the measure $\mu_n$ corresponding to $G_n\in\cG_n$ is far from  a weak neighborhood of $\cW$ with the probability that the surface tension of $G_n$ is locally abnormally small.

Finally, to prove Theorem \ref{formeGn}, we exhibit a set $\cF_n$ of uniformly continuous functions such that we can bound from above the symmetric difference $|G_n\Delta (n(x+W_p)\cap\sC_\infty)|$ by $\sup_{f\in\fF_n}|\mu_n(f)-\nu(f)|$ for some $\nu\in\cW$ and then apply the result of Theorem \ref{prel}.

The rest of the paper is organized as follows. In section \ref{defs}, we give some definitions and useful results. We do the construction of the norm $\beta_p$ in section \ref{sect3}. In section \ref{sectionULD}, we prove the upper large deviations in Theorem \ref{ULD}. We build a continuous object $P_n$ from a minimizer $G_n\in\cG_n$ and prove that its associated measure is close in some sense to the measure $\mu_n$ of $G_n$ in section \ref{sectconstruction}. Finally, in section \ref{sectionLLD}, we prove Theorem \ref{prel} that is a preliminary work before proving the lower large deviations Theorem \ref{LLD} and the convergence of $G_n$ properly rescaled towards a limit shape in Theorem \ref{formeGn}.
\section{Some definitions and useful results}\label{defs}
\subsection{Geometric notations}
For $x=(x_1,\dots,x_d)$, we define $$\|x\|_2=\sqrt{\sum_{i=1}^dx_i^2}\,\qquad \text{and}\qquad\|x\|_\infty=\max_{1\leq i\leq d}|x_i|\,.$$
We say that $x,y\in\sZ^d$ are $*$-connected if $\|x-y\|_\infty=1$. We say that $\gamma=(x_0,\dots,x_n)$ is an $*$-path of $\sZ^d$ if for any $0\leq i\leq n-1$, the points $x_i$ and $x_{i+1}$ belong to $\sZ^d$ and are $*$-connected. We say that $\Gamma$ is $*$-connected or a lattice animal if any $x,y\in\Gamma$ are connected by an $*$-path in $\Gamma$. We denote by $\Animals_x$ the set of lattice animals containing the point $x\in\sZ^d$.
\begin{lem}\label{tailleanimal}[Kesten \cite{Kesten:StFlour}, p82 or Grimmett \cite{Grimmett99}, p85]
Let $x\in\sZ^d$. For all positive integer $l$,
$$|\{\Gamma\in\Animals_x,\,|\Gamma|=l\}|\leq (7^d)^l\,.$$
\end{lem}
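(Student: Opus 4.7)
The plan is to use a classical depth-first-search (DFS) encoding of lattice animals. Given $\Gamma\in\Animals_x$ with $|\Gamma|=l$, view $\Gamma$ as a subgraph of the $*$-lattice on $\sZ^d$, choose an arbitrary spanning tree $T$ of it rooted at $x$, and perform a DFS traversal starting at $x$. This produces a closed walk $x=y_0,y_1,\dots,y_{2(l-1)}=x$ in which every edge of $T$ is crossed exactly twice and every vertex of $\Gamma$ is visited, so the walk in particular determines the vertex set of $\Gamma$.

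I would encode this walk by two ingredients: (i) a sequence of $2(l-1)$ labels in $\{\text{up},\text{down}\}$ with exactly $l-1$ of each, telling us whether each step returns to the parent or enters an unvisited child; and (ii) for each of the $l-1$ ``down'' steps, a displacement vector in $\{-1,0,1\}^d\setminus\{0\}$ specifying which $*$-neighbor becomes the next child. The ``up'' steps are forced by the tree structure, so the pair (pattern, displacements) determines the walk. Since a given code reconstructs the sequence of visited vertices (hence $\Gamma$), the map from animals to codes is injective once a deterministic rule for choosing $T$ and the DFS order is fixed. The number of codes is at most
$$\binom{2(l-1)}{l-1}(3^d-1)^{l-1}\leq 4^{l-1}(3^d-1)^{l-1}=\bigl(4(3^d-1)\bigr)^{l-1},$$
using the elementary bound $\binom{2k}{k}\leq 4^k$.

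To match the stated bound, I would check the elementary inequality $4(3^d-1)\leq 7^d$ for $d\geq 2$: at $d=2$ it is $36\leq 49$, and for larger $d$ one uses that $(7/3)^d/4$ is increasing. This yields that the number of animals of size $l$ containing $x$ is at most $(7^d)^{l-1}\leq (7^d)^l$, as required. The argument has no genuine obstacle: the only technical point is fixing, once and for all, a deterministic convention for the spanning tree and the DFS (for instance, the BFS tree with lexicographic tie-breaking on the finite set of $*$-displacements) so that the map $\Gamma\mapsto\text{code}$ is a well-defined injection; any such convention will do.
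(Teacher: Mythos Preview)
Your argument is correct and is the standard DFS/contour encoding proof of this bound. Note, however, that the paper does not give its own proof of this lemma: it is simply quoted from Kesten and Grimmett, so there is nothing to compare against beyond observing that your approach is essentially the classical one those references use.

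One tiny slip: at $d=2$ you have $4(3^2-1)=32$, not $36$; the inequality $32\leq 49$ is of course still fine. More generally, $4(3^d-1)<4\cdot 3^d\leq 7^d$ for $d\geq 2$ since $(7/3)^2=49/9>4$ and $(7/3)^d$ is increasing, exactly as you say. Your remark about fixing a deterministic spanning-tree and DFS convention to make the encoding injective is the only delicate point, and you handle it correctly.
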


Let $S\subset \sR^d$ and $r>0$, we define $ d_2(x,S)=\inf_{y\in S}\|x-y\|_2$ and $\cV(S,r)$ the open $r$-neighborhood of $S$ by
$$\cV(S,r)=\Big\{\,x\in\sR^d \,:\, d_2(x,S)< r\,\Big\}\,.$$
Let $x\in \sR^d$, $r>0$ and a unit vector $v$. We denote by $B(x,r)$ the closed ball of radius $r$ centered at $x$, by $\disc(x,r,v)$ the closed disc centered at $x$ of radius $r$ normal to $v$, and by $B^+(x,r,v)$ (respectively $B^- (x,r,v)$) the upper (resp. lower) half part of $B(x,r)$ along the direction of $v$, \textit{i.e.},
$$B^+ (x,r,v)=\Big\{\,y\in B(x,r)\,:\, (y-x)\cdot v\geq 0\,\Big\},$$
and
$$B^- (x,r,v)=\Big\{\,y\in B(x,r)\,:\, (y-x)\cdot v\leq 0\,\Big\}\,.$$  We denote by $\alpha_d$ the $\cL^d$ measure of a unit ball in $\sR^d$. We denote by $\cH^{d-1}$ the Hausdorff measure in dimension $d-1$. In particular, the $\cH^{d-1}$ measure of a $d-1$ dimensional unit disc in $\sR^d$ is equal to $\alpha_{d-1}$.
 Let $A$ be a non-degenerate hyperrectangle, \textit{i.e.}, a rectangle of dimension $d-1$ in $\sR^d$. Let $\vv$ be one of the two unit vectors normal to $A$. Let $h>0$, we denote by $\cyl(A,h)$ the cylinder of basis $A$ and height $h$ defined by 
$$\cyl(A,h)=\Big\{\,x+t\vv\, : \,  x\in A,\, t\in[-h,h]\,\Big\}\,.$$
The dependence on $\vv$ is implicit in the notation $\cyl(A,h)$.
Note that these definitions of cylinder may be extended in the case where $A$ is of linear dimension $d-1$, \textit{i.e.}, $A$ is included in an hyperplane of $\sR^d$, which is the affine span of $A$.

\subsection{Sets of finite perimeter and surface energy}
The perimeter of a Borel set $S$ of $\sR^d$ in an open set $O$ is defined as 
$$\cP(S,O)=\sup \left\{\int _S \dive f(x)d\cL^d(x):\, f\in C^\infty_c(O,B(0,1))\right\}\,, $$
where $C^\infty_c(O,B(0,1))$ is the set of the functions of class $\sC^\infty$ from $\sR^d$ to $B(0,1)$ having a compact support included in $O$, and $\dive$ is the usual divergence operator. The perimeter $\cP(S)$ of $S$ is defined as $\cP(S,\sR^d)$. The topological boundary of $S$ is denoted by $\partial S$.  The reduced boundary $\partial ^* S$ of $S$ is a subset of $\partial S$ such that, at each point $x$ of $ \partial ^* S$, it is possible to define a normal vector $n_S(x)$ to $S$ in a measure-theoretic sense, and moreover $\cP(S)=\cH^{d-1}(\partial^*S)$. 
Let $\nu$ be a norm on $\sR^d$. We define its associated Wulff crystal $\cW_\nu$ as
$$\cW_\nu=\Big\{\,x\in\sR^d\,:\, \forall y, \,\, y\cdot x\leq \nu(y)\,\Big\}.$$ 
With the help of the Wulff crystal, we define the surface energy of a general set.
\begin{defn} The surface energy $\cI(S,O)$ of a Borel set $S$ of $\sR^d$ in an open set $O$ is defined as 
$$\cI(S,O)=\sup \left\{\int _S \dive f(x)d\cL^d(x):\, f\in C^1_c(O,\cW_\nu)\right\} .$$
\end{defn}
\noindent We will note simply $\cI(S)=\cI(S,\sR^d)$.
\begin{prop}[Proposition 14.3 in \cite{Cerf:StFlour}]\label{propI} The surface energy $\cI(S,O)$ of a Borel set $S$ of $\sR^d$ of finite perimeter in an open set $O$ is equal to
$$\cI(S,O)=\int _{\partial^* S \cap O} \nu(n_S(x))d\cH^{d-1}(x)\,.$$
\end{prop}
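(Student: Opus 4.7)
The statement identifies the variational definition of $\cI(S,O)$ with its integral representation. I would prove the two inequalities separately. The upper bound is an immediate consequence of the Gauss--Green formula for sets of finite perimeter, while the lower bound requires a careful construction of near-optimal test fields.

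For the upper bound, fix $f \in C^1_c(O, \cW_\nu)$. The Gauss--Green formula gives
$$\int_S \dive f \, d\cL^d = \int_{\partial^* S} f(x)\cdot n_S(x)\, d\cH^{d-1}(x) = \int_{\partial^* S \cap O} f(x)\cdot n_S(x)\, d\cH^{d-1}(x),$$
the last equality because $f$ has support in $O$. By the very definition of $\cW_\nu$, if $f(x) \in \cW_\nu$ then $f(x) \cdot n_S(x) \leq \nu(n_S(x))$ pointwise, so taking the supremum over $f$ gives $\cI(S,O) \leq \int_{\partial^* S \cap O} \nu(n_S)\, d\cH^{d-1}$.

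For the matching lower bound I would build near-optimal test fields. Fix $\ep > 0$. Since $\nu$ is a norm, $\cW_\nu$ is compact convex and one has the duality $\nu(v) = \sup_{y \in \cW_\nu} y \cdot v$. By compactness of $\sS^{d-1}$ and uniform continuity of $\nu$, cover $\sS^{d-1}$ by finitely many Borel sets $U_1, \ldots, U_N$ of small diameter, and pick $y_i \in \cW_\nu$ with $y_i \cdot v \geq \nu(v) - \ep$ for every $v \in U_i$. The Borel sets $A_i = \{x \in \partial^* S \cap O : n_S(x) \in U_i\}$ partition $\partial^* S \cap O$. Since $\partial^* S$ is countably $(d-1)$-rectifiable by the De Giorgi--Federer structure theorem and $n_S$ is Borel, Lusin's theorem lets me further decompose, up to arbitrarily small $\cH^{d-1}$-error, into finitely many pairwise disjoint compact pieces $K_i \subset A_i$ that are separated by positive distance. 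A smooth partition of unity $\{\chi_i\}$ on $O$ with $\chi_i = 1$ in a neighborhood of $K_i$ and $\chi_i = 0$ near $K_j$ for $j \neq i$ yields $g = \sum_i \chi_i y_i$, which lies in $\cW_\nu$ by convexity. Multiplying by a smooth cutoff supported in a slightly shrunken open subset $O' \Subset O$, and if needed mollifying (again using convexity of $\cW_\nu$), produces $f_\ep \in C^1_c(O, \cW_\nu)$. Applying Gauss--Green to $f_\ep$ gives $\int_S \dive f_\ep \, d\cL^d \geq \int_{\bigcup_i K_i} \nu(n_S)\, d\cH^{d-1} - C\ep$. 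Letting the $K_i$'s exhaust $\partial^* S \cap O$, then $O' \nearrow O$, and finally $\ep \to 0$ closes the inequality.

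The main obstacle is the construction in the lower bound: one must simultaneously arrange that $g$ takes the correct value at $\cH^{d-1}$-almost every point of $\partial^* S \cap O$, remains $\cW_\nu$-valued after smoothing, and has compact support strictly inside $O$. The convexity of $\cW_\nu$ is essential, because it allows both convex combinations via the partition of unity and mollification to preserve $\cW_\nu$-membership; the $(d-1)$-rectifiability of $\partial^* S$ is essential to pass from Borel sets on which $n_S$ varies wildly to compact pieces on which it is continuous. Once these approximations are set up, the remaining computation is a direct application of Gauss--Green and the defining property of the $y_i$.
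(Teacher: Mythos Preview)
The paper does not prove this proposition at all: it is quoted verbatim as Proposition~14.3 from Cerf's Saint-Flour lecture notes \cite{Cerf:StFlour} and used as a black box. So there is no proof in the paper to compare your attempt against.

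That said, your sketch is the standard route to this identity and is essentially correct. The upper bound via Gauss--Green is exactly right. For the lower bound, the strategy of discretizing the direction of $n_S$, invoking the duality $\nu(v)=\sup_{y\in\cW_\nu} y\cdot v$, approximating the level sets of $n_S$ from inside by well-separated compacts, and gluing constant vectors via a partition of unity is the classical argument (it is, in fact, how the result is proved in the cited reference and in the underlying geometric measure theory literature). Your identification of the two delicate points---that convexity of $\cW_\nu$ is what makes both the convex combination and any subsequent mollification stay admissible, and that rectifiability plus Lusin is what lets you pass to compacts where $n_S$ is nearly constant---is accurate. One small refinement: rather than Lusin, it is often cleaner to use inner regularity of the measure $\cH^{d-1}\mres(\partial^*S\cap O)$ directly to produce the compacts $K_i$, and then choose the $\chi_i$ with pairwise disjoint supports (which is possible once the $K_i$ are at positive distance); this avoids any ambiguity about what ``$n_S$ continuous on $K_i$'' means.
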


We recall the two following fundamental results.
\begin{prop}[Isoperimetric inequality]\label{isop}
There exist two positive constants $b_{iso}$, $c_{iso}$ which depend only on the dimension $d$, such that for any Cacciopoli set $E$, any ball $B(x,r)\subset \sR^d$,
$$\min\left(\cL^d(E\cap B(x,r)),\cL^d((\sR^d\setminus E)\cap B(x,r))\right)\leq b_{iso}\cP(E,\mathring{B}(x,r))^{d/d-1},$$
$$\min\left(\cL^d(E),\cL^d(\sR^d\setminus E)\right)\leq c_{iso} \cP(E)^{d/d-1}\,.$$
\end{prop}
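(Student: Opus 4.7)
The plan is to deduce both statements from the Sobolev--Gagliardo--Nirenberg inequality for $BV$ functions, which in its sharp form states that for every $u\in BV(\sR^d)$ with $\cL^d(\{u\neq 0\})<\infty$,
$$\|u\|_{L^{d/(d-1)}(\sR^d)}\leq K_d\,|Du|(\sR^d),$$
with $K_d$ depending only on the dimension. Applied to $u=\ind_E$ this is exactly the classical global isoperimetric inequality
$$\cL^d(E)^{(d-1)/d}\leq K_d\,\cP(E),$$
which, by raising to the power $d/(d-1)$, gives $\cL^d(E)\leq c_{iso}\,\cP(E)^{d/(d-1)}$.

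For the global statement (the second inequality), I would distinguish two cases. If $\cL^d(E)\leq \cL^d(\sR^d\setminus E)$, apply the inequality above directly to $E$; if the reverse holds, apply it to $\sR^d\setminus E$ and use $\cP(E)=\cP(\sR^d\setminus E)$. In both cases one obtains $\min(\cL^d(E),\cL^d(\sR^d\setminus E))\leq c_{iso}\cP(E)^{d/(d-1)}$. Note that the inequality is trivial whenever $\cP(E)=+\infty$, and when $\cP(E)<\infty$ the global isoperimetric inequality already forces the minimum to be finite, so the argument is complete.

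For the local (relative) statement, I would reduce to the unit ball by scaling: the two sides of the inequality are homogeneous of degree $d$ and $d$ respectively under $E\mapsto rE$ (using $\cP(rE,\mathring{B}(0,r))=r^{d-1}\cP(E,\mathring{B}(0,1))$), so it suffices to establish the bound for $B(x,r)=B(0,1)$ with a constant $b_{iso}$ depending only on $d$. On the unit ball, I would invoke the Poincaré--Sobolev inequality for $BV$, namely
$$\Bigl\|u-\fint_{B(0,1)}u\Bigr\|_{L^{d/(d-1)}(B(0,1))}\leq C_d\,|Du|(B(0,1)),$$
applied to $u=\ind_E$. A short computation with the mean $m=\cL^d(E\cap B(0,1))/\cL^d(B(0,1))$ shows that the left-hand side is bounded below by $c\,\min(\cL^d(E\cap B(0,1)),\cL^d(B(0,1)\setminus E))^{(d-1)/d}$, while the right-hand side equals $C_d\,\cP(E,\mathring{B}(0,1))$. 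Raising to the power $d/(d-1)$ yields the desired inequality with an explicit $b_{iso}=b_{iso}(d)$.

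The main obstacle is really the local statement: one has to justify the Poincaré--Sobolev inequality on the ball for $BV$ functions (equivalently, the relative isoperimetric inequality in $B(x,r)$) with a dimensional constant and with the truncated perimeter $\cP(E,\mathring{B}(x,r))$ on the right-hand side, not the full perimeter $\cP(E)$. This is a standard but non-trivial ingredient of geometric measure theory; in the present paper it is only cited, and I would simply invoke it from a standard reference (e.g.\ the $BV$ chapter of Ambrosio--Fusco--Pallara or the Saint-Flour notes \cite{Cerf:StFlour}) rather than reprove it, since the statement is classical and independent of percolation.
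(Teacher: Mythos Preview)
Your outline is correct and follows the standard route via the Sobolev--Gagliardo--Nirenberg inequality for $BV$ functions together with the Poincar\'e--Sobolev inequality on balls; in particular you correctly identify that the only nontrivial point is the relative inequality with the truncated perimeter $\cP(E,\mathring{B}(x,r))$ on the right, and that scaling reduces to the unit ball.

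However, there is nothing to compare: the paper does not prove this proposition at all. It is listed in Section~\ref{defs} under ``We recall the two following fundamental results'' and is simply stated as a classical fact from geometric measure theory, to be used as a black box later (in Step~\textit{(i)} of the proof of Theorem~\ref{prel}). Your own final paragraph already anticipates this. So your proposal is more detailed than what the paper does, not different from it.
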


\subsection{Approximation by convex polytopes}
We recall here an important result, which allows to approximate adequately a set of finite perimeter by a convex polytope.
\begin{defn}[Convex polytope]
Let $P\subset \sR^d$. We say that $P$ is a convex polytope  if there exist $v_1,\dots,v_m$ unit vectors and $\varphi_1,\dots,\varphi_m$ real numbers such that
$$P=\bigcap_{1\leq i\leq m}\Big\{\,x\in \sR^d  : \, x\cdot v_i\leq \varphi_i\,\Big\}\,.$$
We denote by $F_i$ the face of $P$ associated with $v_i$, \textit{i.e.}, $$F_i= P\cap\Big\{\,x\in \sR^d  : \, x\cdot v_i= \varphi_i\,\Big\}\,.$$
\end{defn}
\noindent Any convex subset can be approximated from the outside and from the inside by a convex polytope with almost the same surface energy.
\begin{lem}\label{ApproP} Let $A$ be a bounded convex set. For each $\ep>0$, there exist convex polytopes $P$ and $Q$ such that $P\subset A\subset Q$ and $\cI(Q)-\ep\leq \cI(A)\leq \cI(P)+\ep$.  
\end{lem}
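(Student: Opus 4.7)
The plan is to approximate $A$ in Hausdorff distance by convex polytopes, from the outside using supporting half-spaces and from the inside using convex hulls of finite boundary nets, and then to show that the surface energy $\cI$ is continuous along such approximating sequences. Two facts underpin the proof. First, since $A$ is bounded and convex, its topological boundary $\partial A$ is $(d-1)$-rectifiable, coincides with the reduced boundary $\partial^* A$ up to an $\cH^{d-1}$-null set, and admits an outer unit normal $n_A(x)$ at $\cH^{d-1}$-almost every $x\in\partial A$; the same regularity holds for any convex polytope. Second, the norm $\nu$ is Lipschitz on $\sR^d$, hence uniformly continuous on $\sS^{d-1}$, which is the only place where it is evaluated in the formula of Proposition \ref{propI}.

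For the outer polytope, fix $\delta>0$ and pick unit vectors $v_1,\dots,v_m$ such that every direction of $\sS^{d-1}$ lies within distance $\delta$ of some $v_i$. Setting $\varphi_i=\sup_{x\in A}x\cdot v_i$, the convex polytope
$$Q_\delta=\bigcap_{i=1}^m\bigl\{x\in\sR^d:\,x\cdot v_i\leq\varphi_i\bigr\}$$
contains $A$, and its Hausdorff distance to $A$ tends to $0$ as $\delta\to 0$, since each hyperplane $\{x\cdot v_i=\varphi_i\}$ supports $A$ and the directions $v_i$ become dense in $\sS^{d-1}$. For the inner polytope, pick a $\delta$-net $\{x_1,\dots,x_N\}\subset\partial A$ and set $P_\delta=\text{conv}(x_1,\dots,x_N)$; by convexity of $A$ we have $P_\delta\subset A$, and a standard argument (relying on the fact that $A$ has nonempty interior, as is the case for the Wulff crystal to which the lemma will be applied) gives that $P_\delta\to A$ in Hausdorff distance.

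It then remains to establish that if convex bodies $K_n$ converge to a convex body $K$ in Hausdorff distance, then $\cI(K_n)\to\cI(K)$. This follows from the weak continuity of the surface area measure $S_K$ on $\sS^{d-1}$ of a convex body under Hausdorff convergence (a classical result in convex geometry; see for instance Schneider's monograph on the Brunn--Minkowski theory) combined with the continuity of $\nu$ on $\sS^{d-1}$: writing $\cI(K)=\int_{\sS^{d-1}}\nu(v)\,dS_K(v)$ via Proposition \ref{propI}, we obtain $\cI(K_n)\to\cI(K)$. Applying this to $Q_\delta$ and $P_\delta$ and choosing $\delta$ small enough yields the two inequalities $\cI(A)\leq\cI(P_\delta)+\ep$ and $\cI(Q_\delta)\leq\cI(A)+\ep$.

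The most delicate point is the weak continuity of the surface area measure, which is the main obstacle I expect in the proof. An entirely self-contained argument is however possible in the present setting: one partitions $\sS^{d-1}$ into the caps $C_i$ of radius $\delta$ centered at the $v_i$, and for the outer approximation one compares $\cH^{d-1}(F_i)$, the area of the face of $Q_\delta$ with normal $v_i$, with $\cH^{d-1}\bigl(\{x\in\partial A:n_A(x)\in C_i\}\bigr)$ via orthogonal projection onto the hyperplane $\{x\cdot v_i=\varphi_i\}$; uniform continuity of $\nu$ then lets one replace $\nu(v_i)$ by $\nu(n_A(x))$ up to an error of order $\delta$. Summation over $i$ gives a bound of the form $|\cI(Q_\delta)-\cI(A)|\leq C(A)\,\delta$, and an analogous estimate holds for $P_\delta$. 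Picking $\delta$ small enough to make both errors less than $\ep$ completes the proof.
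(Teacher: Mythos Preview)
Your proof is correct but takes a different route from the paper. For the inner polytope $P$, the paper argues more cheaply: it lets $P_n$ be the convex hull of the first $n$ points of a dense sequence in $\partial A$, notes that $P_n\to A$ in $L^1$, and invokes only the \emph{lower} semi-continuity of $\cI$ for the $L^1$ topology to obtain $\cI(A)\leq\liminf_n\cI(P_n)$, which immediately yields $\cI(A)\leq\cI(P_n)+\ep$ for some large $n$. No surface-area-measure machinery is needed, because the inequality to be proved runs in exactly the direction that lower semi-continuity already provides. For the outer polytope $Q$, the paper gives no argument at all and simply cites an external lemma (stated there for the Wulff shape) with the remark that it adapts to a general bounded convex $A$.

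Your approach instead establishes full continuity of $\cI$ along Hausdorff-converging convex bodies via the weak continuity of the surface area measures $S_K$ on $\sS^{d-1}$, and applies it symmetrically to both $P_\delta$ and $Q_\delta$. This is a heavier tool, but it has the merit of treating both directions uniformly and of being essentially self-contained (modulo the Schneider reference), whereas the paper's route is quick for $P$ but outsources $Q$ entirely. One small caveat: your ``self-contained'' sketch at the end, comparing $\cH^{d-1}(F_i)$ with $\cH^{d-1}(\{x\in\partial A:n_A(x)\in C_i\})$ by projection, is the nontrivial part of the weak-continuity statement and would require more care to make fully rigorous; relying on the cited convex-geometry result is the cleaner option.
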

\begin{proof}
Let $A$ be a bounded convex set. Let $\ep>0$. Let $(x_k)_{k\geq 1}$ be a dense family in $\partial A$. For $n\geq 1$, we define $P_n$ as the convex hull of $x_1,\dots,x_n$, \textit{i.e.}, the smallest convex that contains the points $x_1,\dots,x_n$. 
As $A$ is convex, we have $P_n\subset A$ and $P_n$ converges towards $A$ when $n$ goes to infinity for the $\cL ^1$ topology.
The functional $\cI$ is lower semi-continuous, thus
$$\cI(A)\leq\liminf_{n\rightarrow \infty}\cI(P_n)\,,$$
so there exists $n$ large enough such that 
$$\cI(A)\leq \cI(P_n)+\ep\,$$
and we take $P=P_n$.
The existence of $Q$ was shown in Lemma 5.1. in \cite{cerf2000} for the Wulff shape. The proof may be easily adapted to a general convex bounded set $A$. 
\end{proof}


\section{Construction of the norm}\label{sect3}

Minimizing the open edge boundary is the analogue of minimizing a surface tension in the continuous setting. We shall build a norm $\beta_p$ that represents the tension that is exerted on the surface, \textit{i.e.}, any point $x$ in a surface $S$ having $n_S(x)$ as a normal unit exterior vector has a tension $\beta_p(n_S(x))n_S(x)$ that exerts at the point $x$. To build this norm, let us consider $G_n\in\cG_n$. We zoom on the boundary of $G_n$, we look at what happens in a small but macroscopic cube centered at a point $x$ in the boundary $\partial G_n$ (see figure \ref{fig1}). The cube is located in such a way that its bottom intersects $G_n$ and its top intersects $\sZ^d\setminus G_n$, and it is rotated so that its normal vector coincides with the normal exterior vector at the point $x$. As this cube is small, the portion of $G_n$ in that cube does not affect much $|G_n|$, the total volume of $G_n$. Thus, if one would like to minimize the open edges to volume ratio, one needs to minimize the number of open edges of $\partial G_n$ in that cube. This problem is equivalent to finding a set of edges that separates the top from the bottom of the cube with a minimal number of open edges. 
\begin{figure}[H]
\def\svgwidth{0.5\textwidth}
\begin{center}
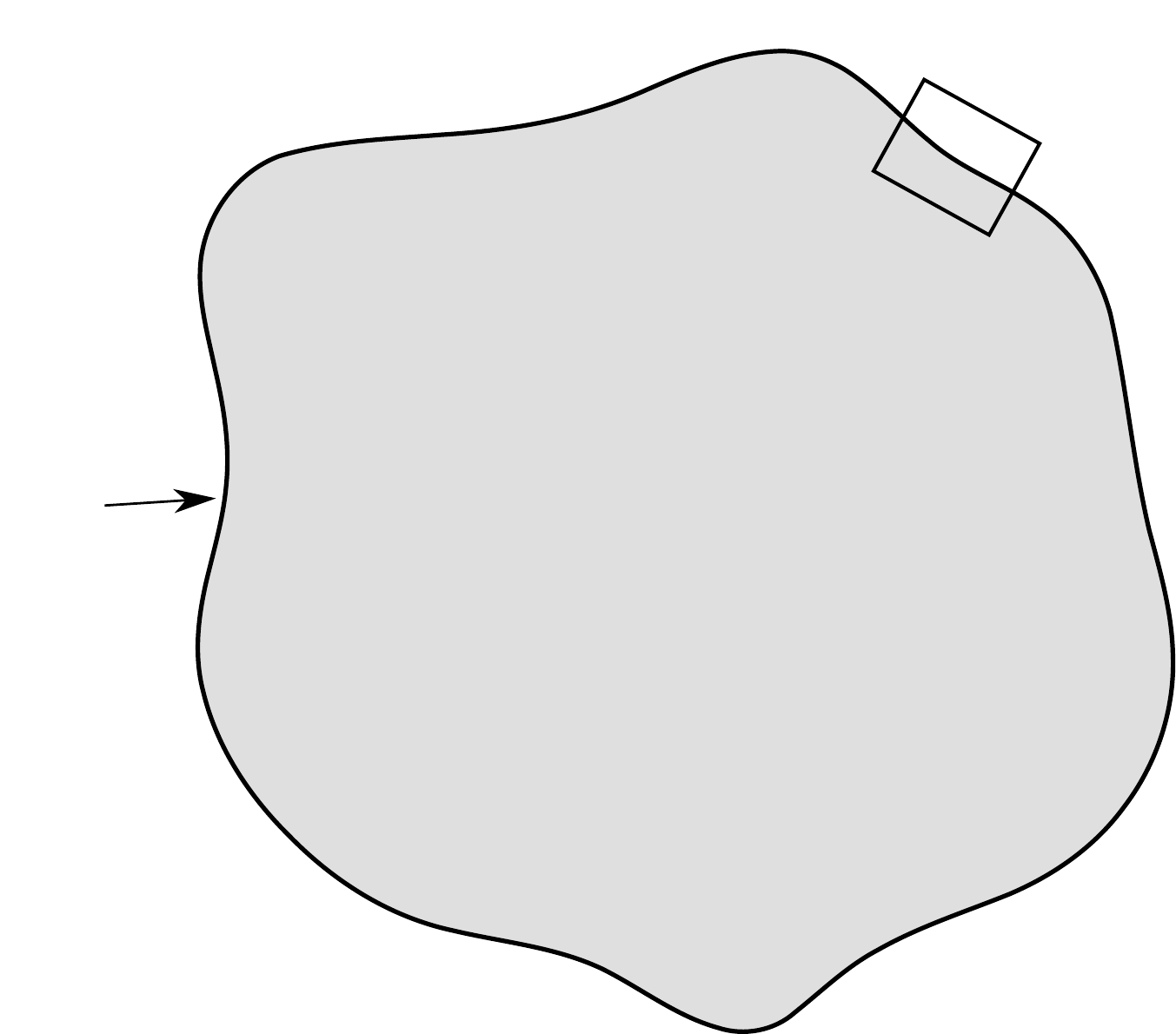
\caption[fig2]{\label{fig1}A small box on the boundary $\partial G_n$ of a minimizer $G_n\in\cG_n$}
\end{center}
\end{figure}
Let us give now a more precise definition of the norm $\beta_p$.
We consider a bond percolation on $\sZ^d$ of parameter $p>p_c(d)$ with $d\geq 2$. We introduce many notations used for instance in  \cite{Rossignol2010} concerning flows through cylinders.
Let $A$ be a non-degenerate hyperrectangle, \textit{i.e.}, a rectangle of dimension $d-1$ in $\sR^d$. Let $\vv$ be one of the two unit vectors normal to $A$. Let $h>0$, we denote by $\cyl(A,h)$ the cylinder of basis $A$ and height $2h$ defined by 
$$\cyl(A,h)=\Big\{\,x+t\vv\,: \,  x\in A,\, t\in[-h,h]\,\Big\}\,.$$
The set $\cyl(A,h)\setminus A$ has two connected components, denoted by $C_1(A,h)$ and $C_2(A,h)$. For $i=1,2$, we denote by $C'_i(A,h)$ the discrete boundary of $C_i(A,h)$ defined by 
$$C'_i(A,h)=\Big\{\,x\in\sZ^d\cap C_i(A,h)\,:\,
\exists y \notin \cyl(A,h),\, \langle x,y \rangle\in\E^d  \,\Big\}\,.$$

We say that a set of edges $E$ cuts $C'_1(A,h)$ from $C'_2(A,h)$ in $\cyl(A,h)$ if any path $\gamma$ from $C'_1(A,h)$ to $C'_2(A,h)$ in $\cyl(A,h)$ contains at least one edge of $E$. We call such a set a cutset. For any set of edges $E$, we denote by $|E|_o$ the number of open edges in $E$. We shall call it the capacity of $E$. We define
$$\tau_p(A,h)=\min\Big\{\, |E|_o: \,\text{ $E$ cuts $C'_1(A,h)$ from $C'_2(A,h)$ in $\cyl(A,h)$}\,\Big\}\,.$$
Note that this is a random quantity as $|E|_o$ is random, and that the cutsets in this definition are pinned near the boundary of $A$. Finding cutsets of minimal capacity is equivalent to the study of maximal flows, see \cite{Bollobas}. To each edge $e$, we can associate the random variable $t(e)=\ind_{e\text{ is open}}$. In the study of maximal flows, we interpret each $t(e)$ as the capacity of the edge $e$, \textit{i.e.}, the maximal amount of water that can flow through $e$ per unit of time. We are interested in the maximal amount of water that can flow through the cylinder given the constraint on the capacity. We refer to \cite{flowconstant} for a rigorous definition of maximal flows. In the following, we will use the term flow to speak about the quantity $\tau_p$. 
The following proposition is a corollary of Proposition 3.5 in  \cite{Rossignol2010}, it enables us to give a rigorous definition of the norm $\beta_p$.
\begin{prop}[Definition of the norm $\beta_p$]\label{normbeta}
Let $d\geq 2$, $p>p_c(d)$, $A$ be a non-degenerate hyperrectangle and $\vv$ one of the two unit vectors normal to $A$. Let $h$ an height function such that $\lim_{n\rightarrow \infty }h(n)=\infty$. The limit 
$$\beta_p(\vv)=\lim_{n\rightarrow \infty } \dfrac{\E[\tau_p(nA,h(n))]}{\cH^{d-1}(nA)}$$
exists and is finite. Moreover, this limit is independent of $A$ and $h$ and $\beta_p$ is a norm.
\end{prop}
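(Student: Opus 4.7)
The plan is to invoke Proposition~3.5 of \cite{Rossignol2010}, which establishes the existence of a directional flow constant for general i.i.d. edge capacities under mild integrability. In the present setting the capacities are the indicators $t(e)=\ind_{e\text{ open}}$, a bounded Bernoulli variable, so the integrability hypotheses are trivial; only strict positivity of the limit will require the supercriticality assumption $p>p_c(d)$.

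The main ingredients in the underlying proof, which one would reproduce here, are as follows. Fix a unit vector $\vv$ and a non-degenerate hyperrectangle $A$ orthogonal to it. Translation invariance of the percolation makes $\tau_p(\cdot,h)$ a stationary random set function, and it is almost-subadditive: any cutset of $\cyl(A,h)$ can be built by concatenating minimal cutsets of a hyperplanar partition $A=A_1\cup\cdots\cup A_m$ and sealing the vertical seams, adding at most $Ch\sum_i\cH^{d-2}(\partial A_i)$ extra edges, a correction negligible against $\cH^{d-1}(A)$ at large scale. A subadditive ergodic theorem (in the Akcoglu--Krengel form suited to rectangle-indexed families) then yields convergence of $\E[\tau_p(nA,h(n))]/\cH^{d-1}(nA)$ to a finite constant. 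Independence of the height function $h(n)\to\infty$ follows because minimal cutsets can be localized in a constant neighbourhood of $A$, so extra height is harmless; independence of the choice of base $A$ for a fixed normal $\vv$ comes from a tiling argument in which $nA$ and $nA'$ are each paved by translates of a small common cube orthogonal to $\vv$, with boundary mismatch of order $o(n^{d-1})$.

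Once existence is secured, the norm axioms are checked directly. Positive homogeneity is extended by setting $\beta_p(\lambda\vv)=|\lambda|\beta_p(\vv)$, and the symmetry $\beta_p(-\vv)=\beta_p(\vv)$ is immediate since $\cyl(A,h)$ is symmetric under $\vv\mapsto -\vv$. The triangle inequality is obtained by the classical tent construction: given unit vectors $\vv_1,\vv_2$ with sum $\vv$, one builds a cutset in a cylinder normal to $\vv/\|\vv\|_2$ shaped as a roof with two flat faces normal to $\vv_1$ and $\vv_2$, each face being an approximately optimal cutset for its own direction; elementary trigonometry on the areas of the projected faces then bounds $\beta_p(\vv)$ by $\beta_p(\vv_1)+\beta_p(\vv_2)$.

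The most delicate step I expect to be $\beta_p(\vv)>0$. A cutset of $\cyl(nA,h(n))$ contains at least $c\,\cH^{d-1}(nA)$ edges by a geometric lower bound, so it suffices to show that a positive fraction of those edges is open, uniformly over the choice of cutset. This is where supercriticality enters: a Peierls/BK renormalization argument at a scale depending on $p-p_c(d)$ shows that cutsets in which an abnormally small fraction of edges is open are exponentially rare when $p>p_c(d)$. This simultaneously delivers strict positivity of $\beta_p$ and the concentration estimates on $\tau_p(nA,h(n))$ that will be needed in the upper and lower large-deviation arguments later in the paper.
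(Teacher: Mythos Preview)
Your approach matches the paper's: it gives no proof and simply records the proposition as a corollary of Proposition~3.5 in \cite{Rossignol2010}. Your supplementary sketch of the underlying argument is broadly accurate, though one detail is off---minimal cutsets do not in general localize in a constant neighbourhood of $A$, and independence of the height is obtained instead by monotonicity and boundary-gluing comparisons between cylinders of different heights.
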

\noindent The norm $\beta_p$ is called the flow constant. Roughly speaking, $\beta_p(\vv)$ corresponds to the expected maximal amount of water that can flow in the direction $\vv$ on average. Actually, we can obtain a stronger convergence. A straightforward application of Theorem 3.8 in \cite{Rossignol2010} gives the existence of the following almost sure limit:
 $$\lim_{n\rightarrow \infty } \dfrac{\tau_p(nA,h(n))}{\cH^{d-1}(nA)}=\beta_p(\vv)\,.$$
We define $$\beta_{min}=\inf_{\vv\in \sS^{d-1}}\beta_p(\vv)\,,\qquad\beta_{max}=\sup_{\vv\in \sS^{d-1}}\beta_p(\vv)\,.$$
As $\beta_p$ is a norm on $\sR^d$, we have $\beta_{min}>0$ and $\beta_{max}<\infty$.
 We will need the following upper large deviations result which is a straightforward application of Theorem 4 in \cite{TheretUpperTau14}.
 
\begin{thm}\label{upperlargedeviationcyl}
Let $d\geq 2$ and $p>p_c(d)$. For every unit vector $\vv$, for every non-degenerate hyperrectangle $A$ normal to $\vv$, for every  $h>0$ and for every  $\lambda>\beta_p(\vv)$, there exist $C_1$ and $C_2$ depending only on $\lambda$ and $G$, such that, for all $n\geq 0$,
$$\Prb\left[ \tau_p(nA,hn)\geq \lambda \cH^{d-1}(A)n^{d-1}\right]\leq C_1\exp(-C_2h n^{d})\,.$$
\end{thm}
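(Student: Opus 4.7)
The plan is to derive the bound directly from Theorem 4 of \cite{TheretUpperTau14}, which provides an upper large deviations inequality for the maximal flow through a straight cylinder with Bernoulli capacities. Since the author explicitly announces a straightforward application, my proposal amounts to matching the setup carefully and checking that nothing has to be reproved.

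First, I would translate the cutset formulation of $\tau_p(nA,hn)$ into a max-flow statement. By the discrete max-flow / min-cut duality (see \cite{Bollobas}), assigning capacity $t(e)=\ind_{e \text{ open}}$ to each edge $e$ of $\cyl(nA,hn)\cap\sZ^d$, the quantity $\tau_p(nA,hn)$ equals the maximum flow from $C'_1(nA,hn)$ to $C'_2(nA,hn)$ inside the cylinder. This identification places $\tau_p$ exactly in the framework of \cite{TheretUpperTau14}.

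Next, I would apply Théret's theorem to $\cyl(nA,hn)$, which gives, for every $\lambda>\beta_p(\vv)$,
$$\Prb\left[\tau_p(nA,hn)\geq \lambda\, \cH^{d-1}(nA)\right]\leq C_1\exp\bigl(-C_2\,\cL^d(\cyl(nA,hn))\bigr),$$
with constants depending on $\lambda$, $p$, $d$ and on the shape of $A$. Since $\cH^{d-1}(nA)=\cH^{d-1}(A)n^{d-1}$ and $\cL^d(\cyl(nA,hn))=2h\,\cH^{d-1}(A)\,n^d$, absorbing the prefactor $2\cH^{d-1}(A)$ into $C_2$ yields precisely the claimed bound with rate $h n^d$. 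The fact that the speed is of volume order $hn^d$ is morally because an abnormally large min-cut requires a bulk excess of open edges throughout the cylinder, not just a surface-order fluctuation.

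The only point requiring verification is that the \emph{pinned} cutsets in the definition of $\tau_p$ here (touching $C'_1$ and $C'_2$) correspond to the cutsets, or equivalently to the flows, used by Théret. This is a combinatorial matter: any cutset in one formulation can be converted to one in the other by adding at most $O(n^{d-2})$ edges along the lateral boundary of the cylinder, which is negligible compared to the threshold $\lambda\cH^{d-1}(A)n^{d-1}$. One may therefore first pick $\lambda'\in(\beta_p(\vv),\lambda)$, absorb the boundary correction, and then apply Théret's theorem at level $\lambda'$. I expect this step to be entirely routine, consistent with the author's "straightforward application" remark; no genuine obstacle is anticipated.
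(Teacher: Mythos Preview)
Your proposal is correct and matches the paper's approach exactly: the paper does not give a proof of this theorem at all, it simply states it as ``a straightforward application of Theorem 4 in \cite{TheretUpperTau14}''. Your write-up just fleshes out the routine verification (scaling of $\cH^{d-1}$ and $\cL^d$, and the negligible boundary correction between cutset conventions) that the paper leaves implicit.
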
 
To ease the reading and lighten the notations, the value of the constants may change from appearance to appearance.

\section{Upper large deviations}\label{sectionULD}

  \subsection{The case of a cylinder}

The aim of this section is to prove Theorem \ref{ULD}.
A convex polytope of dimension $d-1$ is a convex polytope $F$ which is contained in an hyperplane of $\sR^d$ and such that $\cH^{d-1}(F)>0$. We have the following Lemma.

\begin{lem}\label{lem2}Let $p>p_c(d)$. Let $F$ be a convex polytope of dimension $d-1$. Let $v$ be a unit vector normal to $F$. There exist positive real numbers $C_1$ and $C_2$ depending on $F$, $p$ and $d$ such that for all $n\geq 1$, for all $\lambda>\beta_p(v)\cH^{d-1}(F)$, for all $h>0$
$$\Prb[\tau_p(nF,nh)\geq \lambda n^{d-1}]\leq C_1\exp(-C_2 h n^{d})\,.$$
\end{lem}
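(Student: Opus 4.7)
The plan is to reduce the claim for the $(d-1)$-dimensional polytope $F$ to the hyperrectangular case already handled by Theorem \ref{upperlargedeviationcyl}. The idea is to approximate $F$ from inside by a disjoint union of $(d-1)$-dimensional hyperrectangles in the hyperplane of $F$, construct a minimal cutset in each lifted sub-cylinder, and glue the result into a cutset of $\cyl(nF,nh)$.

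First, fix a small parameter $\eta>0$ to be tuned later. Since $F$ is a convex polytope of dimension $d-1$ lying in the hyperplane $H=\{x\in\sR^d:x\cdot v=c\}$ for some constant $c$, one can choose pairwise disjoint closed hyperrectangles $A_1,\dots,A_N\subset F$ lying in $H$ with common normal $v$, separated by thin \emph{moats}, such that the bad region $F\setminus\bigcup_i A_i$ has $\cH^{d-1}$-measure at most $\eta$. Then pick $\mu\in(\beta_p(v),\lambda/\cH^{d-1}(F))$, which is possible since $\lambda>\beta_p(v)\cH^{d-1}(F)$. Applying Theorem \ref{upperlargedeviationcyl} to each cylinder $\cyl(nA_i,nh)$ and summing,
$$\Prb\bigl[\exists\,i:\,\tau_p(nA_i,nh)\geq \mu\,\cH^{d-1}(A_i)\,n^{d-1}\bigr]\leq NC_1\exp(-C_2\,h\,n^{d})\,,$$
so with probability at least $1-NC_1\exp(-C_2 h n^d)$ each sub-cylinder admits a cutset $E_i$ of capacity at most $\mu\,\cH^{d-1}(A_i)\,n^{d-1}$.

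Next, I would assemble a cutset $\Gamma$ for $\cyl(nF,nh)$ as $\Gamma=\bigcup_i E_i\cup \Gamma_{\mathrm{plug}}$, where $\Gamma_{\mathrm{plug}}$ consists of all edges lying in a thin horizontal slab around the base hyperplane $H$ restricted to the bad region $n(F\setminus\bigcup_i A_i)$. This is a deterministic set of at most $C_F\,\eta\, n^{d-1}$ edges for a constant $C_F$ depending only on $F$ and $d$, hence $|\Gamma_{\mathrm{plug}}|_o\leq C_F\,\eta\, n^{d-1}$ with no probabilistic input. Choosing $\eta$ small enough that $\mu\,\cH^{d-1}(F)+C_F\,\eta<\lambda$, the bound $\sum_i|E_i|_o\leq \mu\,\cH^{d-1}(F)\,n^{d-1}$ on the good event yields
$$\tau_p(nF,nh)\leq |\Gamma|_o\leq \mu\,\cH^{d-1}(F)\,n^{d-1}+C_F\,\eta\, n^{d-1}\leq \lambda\, n^{d-1}\,,$$
which gives the desired exponential bound.

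The main technical obstacle is verifying that $\Gamma$ is genuinely a cutset for $\cyl(nF,nh)$, i.e., that every path from $C'_1(nF,nh)$ to $C'_2(nF,nh)$ must cross $\Gamma$. This requires a case analysis on the location where such a path crosses the base hyperplane $H$: if the last crossing from the upper to the lower side occurs inside some $\cyl(nA_i,nh)$, then the portion of the path contained in that sub-cylinder joins a point of $C'_1(nA_i,nh)$ to a point of $C'_2(nA_i,nh)$ and thus meets $E_i$; otherwise the crossing takes place in the bad region and is absorbed by $\Gamma_{\mathrm{plug}}$. The moats between adjacent $A_i$'s are essential here, since they prevent paths from bypassing a local cutset $E_i$ by hopping laterally to an adjacent sub-cylinder while staying on the same side of $H$; any such lateral hop is forced through the bad region, where it will eventually encounter the equatorial cap.
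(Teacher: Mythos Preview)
Your approach is correct and essentially identical to the paper's: tile $F$ from inside by hyperrectangles (the paper uses congruent hypersquares $S_i$ of side $\kappa$), take the minimal $\tau_p$-cutsets in the corresponding sub-cylinders, glue them with a deterministic set of $O(\eta\,n^{d-1})$ edges lying in a bounded slab about the base hyperplane over the leftover region, and conclude via Theorem~\ref{upperlargedeviationcyl} and a union bound. The only cosmetic difference is that the paper's gluing set $E_0$ explicitly includes $4d$-neighborhoods of the interfaces $\partial nS_i$ (since adjacent squares may abut), whereas your moats absorb those interfaces into the bad region; the paper, like you, asserts rather than spells out the cutset verification.
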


\begin{proof} Let $p>p_c(d)$. Let $F$ be a convex polytope of dimension $d-1$ and $v$ a unit vector normal to $F$. We shall cover $F$ by a finite family of hypersquares and control the probability that the flow is abnormally big in $\cyl(nF,nh)$ by the probability that the flow is abnormally big in one of the cylinders of square basis. Let $\lambda>\beta_p(v)\cH^{d-1}(F)$.
Let $\kappa>0$ be a real number that we will choose later. We denote by $S(\kappa)$ an hypersquare of dimension $d-1$ of side length $\kappa$ and normal to $v$. We want to cover the following region of $F$ by hypersquares isometric to $S(\kappa)$:
$$D(\kappa,F)=\Big\{\,x\in F\,: \, d(x,\partial F)>2\sqrt{d}\kappa\,\Big\}\,.$$
There exists a finite family $(S_i)_{i\in I}$ of closed hypersquares isometric to $S(\kappa)$ included in $F$ having pairwise disjoint interiors, such that $D(\kappa,F )\subset\cup_{i\in I} S_i$ (see figure \ref{fig2}). Moreover, there exists a constant $c_d$ depending only on the dimension $d$ such that
\begin{align}\label{jsap4}
\cH^{d-1}\big(F\setminus D(\kappa,F)\big)\leq c_d\cH^{d-2}(\partial F) \,\kappa\,.
\end{align}
We have then
\begin{align}\label{controleI}
|I|\leq \frac{\cH ^{d-1}(F)}{\cH ^{d-1}(S(\kappa))}\,.
\end{align}
\begin{figure}[H]
\def\svgwidth{0.5\textwidth}
\begin{center}
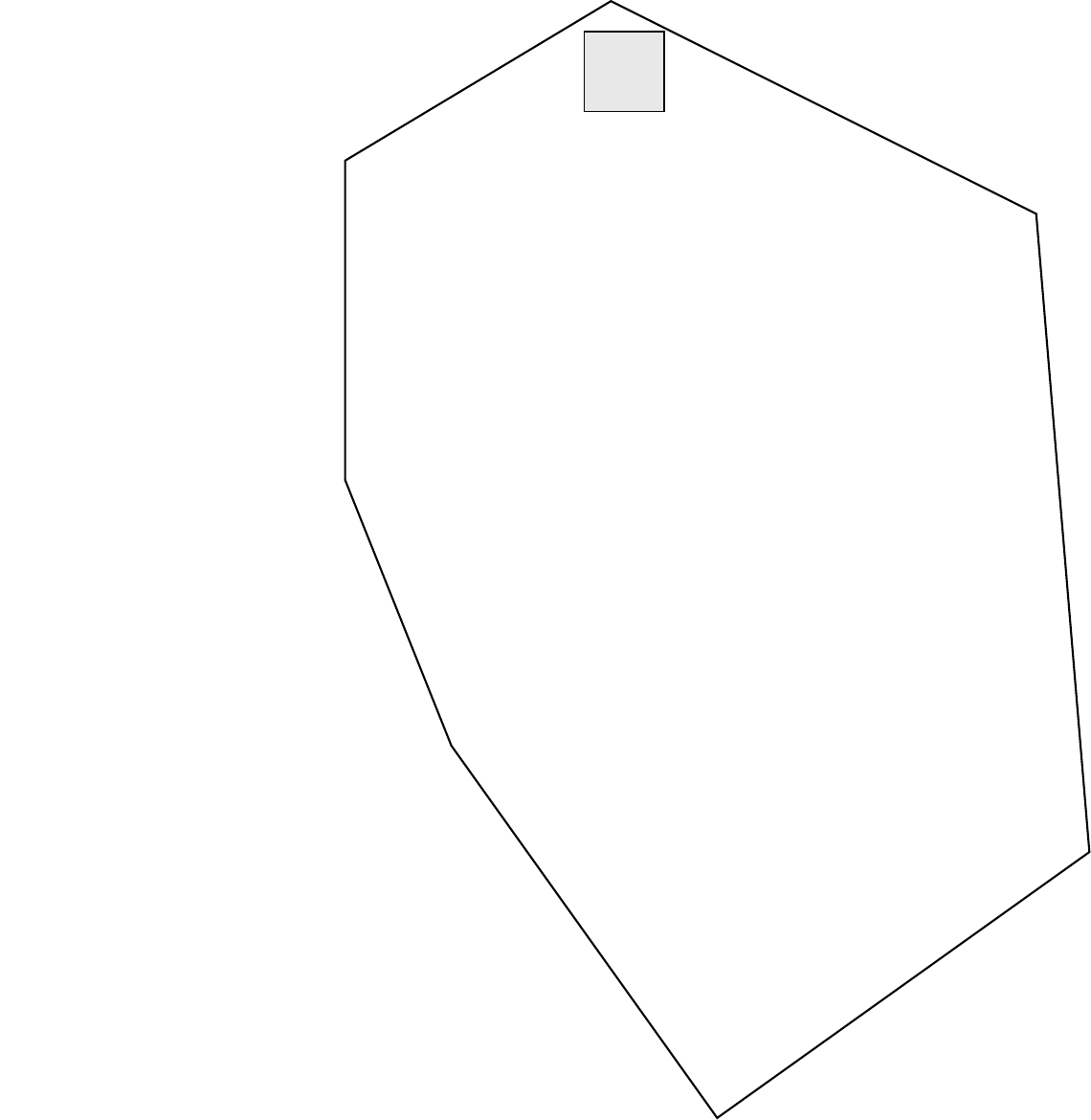
\caption[fig2]{\label{fig2}Covering $P$ with hypersquares}
\end{center}
\end{figure} 
\noindent Let $h>0$.
We would like to build a cutset between $C'_1(nF,nh)$ and $C'_2(nF,nh)$ out of minimal cutsets for the flows  $\tau_p(nS_i,nh)$, $i\in I$. Note that a cutset that achieves the infimum defining $\tau_p(nS_i,nh)$ is pinned near the boundary $\partial nS_i$. However, if we pick up two hypersquares $S_i$ and $S_j$ that share a common side, due to the discretization, their corresponding minimal cutsets for the flow $\tau_p$ do not necessarily have the same trace on the common face of the associated cylinders $\cyl(nS_i,nh)$ and $\cyl(nS_j,nh)$. We shall fix this problem by adding extra edges around the boundaries of the hypersquares $\partial S_i$ in order to glue properly the cutsets. We will need also to add extra edges around $n(F\setminus D(\kappa,F))$ in order to build a cutset between $C'_1(nF,nh)$ and $C'_2(nF,nh)$.
For $i\in I$, let $E_i$ be a minimal cutset for $\tau_p(nS_i,nh)$, \textit{i.e.}, $E_i\subset \E^d$ cuts $C'_1(nS_i,nh)$ from $C'_2(nS_i,nh)$ in $\cyl(nS_i,nh)$ and $|E_i|_o=\tau_p(nS_i,nh)$. 
We fix $\zeta=4d$. Let  $E_0$ be the set of edges of $\E^d$ included in $\cE_{0}$, where we define 
$$\cE_0=\Big\{\,x\in\sR^d\,:\,d\Big(x,nF\setminus \bigcup_{i\in I}nS_i\Big)\leq \zeta\,\Big\}\cup\bigcup_{i\in I}\Big\{\,x\in\sR^d\,:\,d(x,\partial n S_i)\leq \zeta\,\Big\}\,.$$ 
The set of edges $E_0\cup\bigcup_{i\in I} E_i$ separates $C'_1(nF,nh)$ from $C'_2(nF,nh)$ in the cylinder $\cyl(nF,nh)$, therefore,
\begin{align}\label{eq31}
\tau_p(nF,nh)\leq |E_0|_o+\sum_{i\in I} |E_i|_o\leq \card(E_0)+\sum_{i\in I}\tau_p(nS_i,nh)\,.
\end{align}
There exists a constant $c'_d$ depending only on $d$ such that, using inequalities \eqref{jsap4} and \eqref{controleI},
\begin{align*}
\card(E_0)&\leq c'_d\left(\kappa n^{d-1}\cH^{d-2}(\partial F)+|I|\cH^{d-2}(\partial S(\kappa))n^{d-2}\right)\\
&\leq c'_d\left(\kappa n^{d-1}\cH^{d-2}(\partial F)+\dfrac{\cH^{d-1}(F)}{\cH^{d-1}(S(\kappa))}\cH^{d-2}(\partial S(\kappa))n^{d-2}\right)\\
&\leq c'_d\left(\kappa n^{d-1}\cH^{d-2}(\partial F)+\dfrac{\cH^{d-1}(F)}{\kappa}n^{d-2}\right)\,.
\end{align*}
Thus, for $n$ large enough, 
\begin{align}\label{eq32}
\card(E_0) \leq 2c'_d\,\kappa\,\cH^{d-2}(\partial F) n^{d-1}\,.
\end{align}
There exists $s>0$ such that $\lambda>(1+s)\beta_p(v)\cH ^{d-1}(F)$. We choose $\kappa$ small enough such that 
\begin{align}\label{eq3333}
2c'_d\kappa\cH^{d-2}(\partial F) <\frac{s}{2}\beta_{min}\cH ^{d-1}(F)\, .
\end{align}
Inequalities \eqref{eq32} and \eqref{eq3333} yield that 
\begin{align}\label{eq333}
\card(E_0)\leq \frac{s}{2}\beta_p(v)n^{d-1}\cH^{d-1}(F)\,.
\end{align} 
Thanks to inequality \eqref{eq333}, we obtain
\begin{align}\label{eqb1}
\Prb&[\tau_p(nF,nh)\geq \lambda n^{d-1}]\nonumber\\
&\hspace{1cm}\leq\Prb\left[\card(E_0)+\sum_{i\in I}\tau_p(nS_i,nh)\geq (1+s)\beta_p(v)\cH ^{d-1}(F)n^{d-1}\right]\nonumber\\
&\hspace{1cm}\leq \sum_{i\in I} \Prb[\tau_p(nS_i,nh)\geq (1+s/2)\beta_p(v)\cH ^{d-1}(S_i)n ^{d-1}]\,.
\end{align} 
Thanks to Theorem \ref{upperlargedeviationcyl}, there exist positive real numbers $C_1$, $C_2$ such that, for all $i\in I$,
\begin{align}\label{eqb22}
\Prb[\tau_p(nS_i,nh)\geq (1+s/2)\beta_p(v)\cH ^{d-1}(S_i)n ^{d-1}]\leq C_1\exp(-C_2hn^d)\,.
\end{align}
By combining inequalities \eqref{eqb1}  and \eqref{eqb22}, we obtain
\begin{align*}
\Prb[\tau_p(nF,nh)\geq \lambda n^{d-1}]\leq |I|C_1\exp(-C_2hn^d)\,,
\end{align*}
and the result follows.
\end{proof} 
We can now proceed to the proof of Theorem \ref{ULD}. 
\begin{proof}[Proof of Theorem \ref{ULD}]
Let $\ep>0$ and $\ep'>0$. By Lemma \ref{ApproP}, there exists a convex polytope $P$ such that $P\subset W_p$, $\cI_p(P)\leq (1+\ep')\cI_p(W_p)$ and $\cL^d(P)\geq (1-\ep')\cL^d (W_p)$. Up to multiplying $P$ by a constant $\alpha<1$ close to $1$, we can assume without loss of generality that $\cL^d(P)<\cL^d(W_p)$. We have, for small enough $\ep'$ (depending on $\ep$),
\begin{align}\label{bla}
&\Prb\left[n\varphi_n \geq (1+\ep) \dfrac{\cI_p(W_p)}{\theta_p(d)\cL^d(W_p)}\,\Big |0\in\sC_\infty\right]\nonumber\\
&\hspace{3cm}\leq\Prb\left[n\varphi_n \geq (1+\ep/2)\left(\dfrac{1+\ep'}{1-\ep'}\right) \dfrac{\cI_p(W_p)}{\theta_p(d)\cL^d(W_p)}\,\Big |\,0\in\sC_\infty\right]\nonumber\\
&\hspace{3cm}\leq \Prb\left[n\varphi_n \geq (1+\ep/2) \dfrac{\cI_p(P)}{\theta_p(d)\cL^d(P)}\,\Big |\,0\in\sC_\infty\right]\,.
\end{align}
Let us denote by $F_1,\dots, F_m$ the faces of $P$ and let $v_1,\dots,v_m$ be the associated exterior unit vectors. Let $\delta>0$. For $i\in\{1,\dots,m\}$, we define $$C_i=\cyl(F_i+\delta v_i,\delta)\,.$$ All the $C_i$ are of disjoint interiors because $P$ is convex. Indeed, assume there exists \smash{$z\in \smash{\mathring{C_i}\cap \mathring{C_j}}$} for some $i\neq j$. Then there exist unique $x\in F_i$, $y\in F_j$ and $h,h'<2\delta$ such that $z=x+hv_i=y+h'v_j$. The points $x$ and $y$ correspond to the orthogonal projection of $z$ on $P$. As $P$ is convex, the orthogonal projection on $P$ is unique and so $x=y=z$. This contradicts the fact that $z$ belongs to the interior of $C_i$.
We now aim to build a cutset that cuts $nP$ from infinity out of cutsets of minimal capacities for $\tau_p(n(F_i+\delta v_i), n\delta)$, $i\in\{1,\dots,m\}$. The union of these cutsets is not enough to form a cutset from $nP$ to infinity because there are holes between these cutsets. We shall add edges around the boundaries $\partial (n(F_i+\delta v_i))$ to close these holes  (see figure \ref{fig3}). As the distance between two adjacent boundaries $\partial(n (F_i+\delta v_i))$ decreases with $\delta$, by taking $\delta$ small enough, the size of the bridges and so their capacities are not too big. We recall that the capacity of a set, namely the number of open edges in the set, may be bounded from above by its size. Next, we control the maximal flow through the cylinders or equivalently the capacity of minimal cutsets in the cylinders with the help of Lemma \ref{lem2}. 
 
For $i\in \{1,\dots,m\}$, let $E'_i$ be a minimal cutset for $\tau_p(n(F_i+\delta v_i), n\delta)$, \textit{i.e.}, $E'_i$ cuts $C'_1(n(F_i+\delta v_i),\delta)$ from $C'_2(n(F_i+\delta v_i),\delta)$ and $|E'_i|_o=\tau_p(n(F_i+\delta v_i),\delta n)$. We shall add edges to control the space between $E'_i$ and the boundary $\partial (n(F_i+\delta v_i))$. Let $\zeta=4d$.
Let $i,j\in\{1,\dots,m\}$ such that $F_i$ and $F_j$ share a common side. We define
$$\cM(i,j)=\left(\cV(nF_i\cap n F_j, n\delta +\zeta)\setminus \cV(nF_i\cap n F_j, n\delta -\zeta)\right)\cap (nP)^c\,.$$
\noindent Let $M_{i,j}$ denote the set of the edges in $\E_n^d$ included in $\cM(i,j)$ (see figure \ref{fig3}).
There exists a constant $c'_d$ depending only on the dimension $d$ such that for all $i,j\in\{1,\dots,m\}$ such that $F_i$ and $F_j$ share a common side,
\begin{align}\label{contM}
\card(M_{i,j})\leq c_d \delta^{d-1} n^{d-1}\,.
\end{align} 
We set $$M=\bigcup_{i,j}M_{i,j}\,,$$ where the union is over $i,j\in\{1,\dots,m\}$ such that $i\neq j$ and $F_i$, $F_j$ share a common side. The set $\Gamma_n=M\cup \bigcup_{i=1}^m E'_i$ cuts $nP$ from infinity. We define $H_n$ to be the set of the vertices connected to $0$ by open paths which do not use an edge of $\Gamma_n$, \textit{i.e.},
$$H_n=\Big\{\,x\in \sZ^d,\, \text{$x$ is connected to $0$ with open edges in $\E^d \setminus \Gamma_n$}\,\Big\}\,.$$
\begin{figure}[H]
\def\svgwidth{0.9\textwidth}
\begin{center}
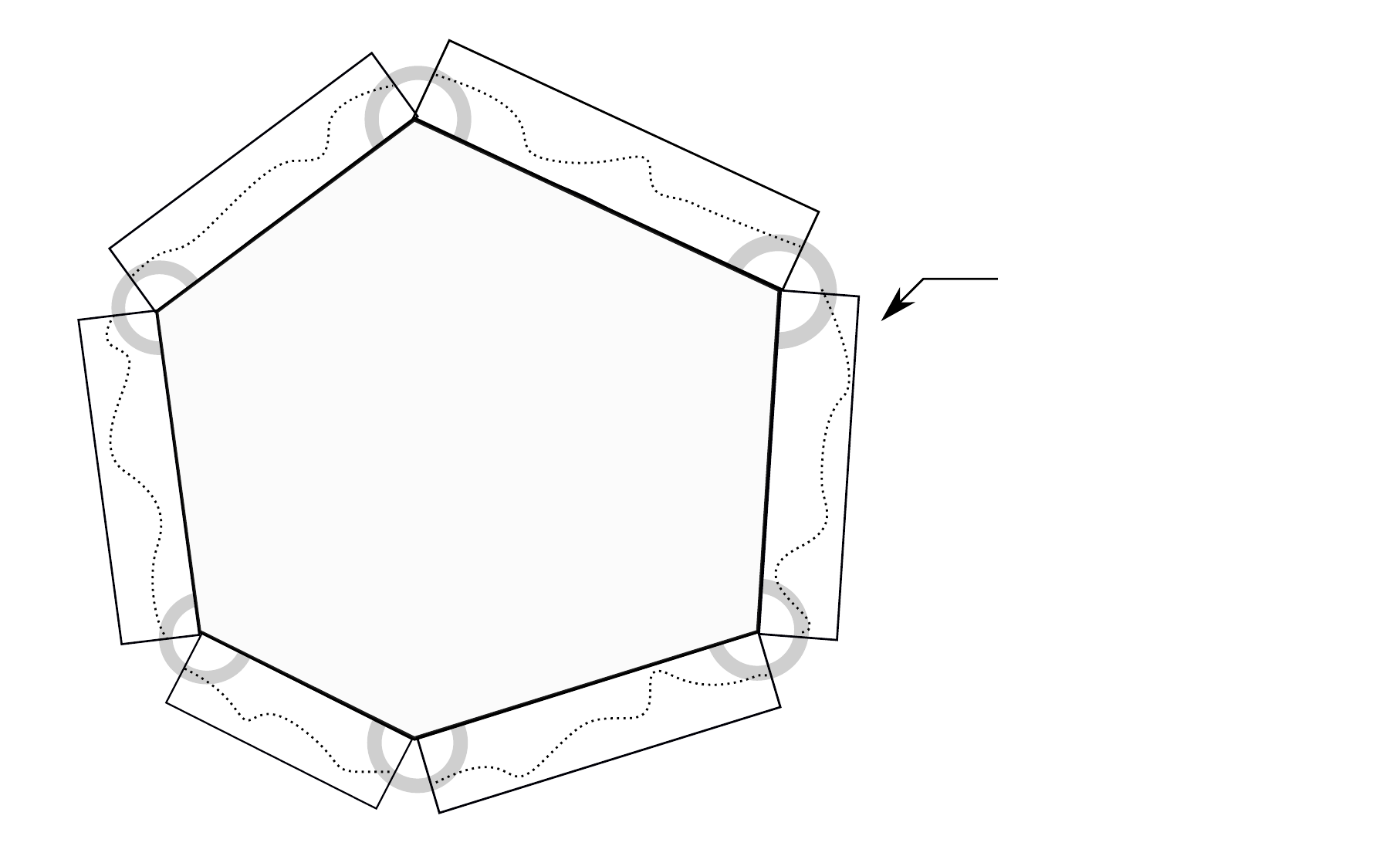
\caption[fig3]{\label{fig3}Construction of a cutset $\Gamma_n$ from $nP$ to infinity}
\end{center}
\end{figure}
\noindent By definition, the set $H_n$ is connected.
As we condition on the event $\{0\in \sC_\infty\}$, the set $H_n$ is a subgraph of $\sC_\infty$.
As $P$ is a polytope, $$\cI_p(P)=\sum_{i=1}^m \beta_p(v_i)\cH^{d-1}(F_i)\,.$$
Moreover, we have $$|\partial_{\sC_\infty} H_n|= |\partial ^o H_n|\leq  | \Gamma_n|_o\,,$$
where the last inequality comes from the fact that, by construction of $H_n$, if $e\in \partial H_n \setminus \Gamma_n$, then $e$ is necessarily closed. Using \eqref{contM}, we have
\begin{align}\label{eq44}
| \Gamma_n|_o&\leq \card(M)+\sum_{i=1}^m |E'_i|_o\nonumber\\
&\leq c_d m ^2\delta ^{d-1}n ^{d-1} +\sum_{i=1}^m\tau_p\big(n(F_i+\delta v_i),\delta n\big)\,.
\end{align}
We choose $\delta$ small enough so that 
\begin{align}\label{eq45}
m^2 c_d\delta^{d-1} <\delta\cI_p(P)/2\qquad \text{and}\qquad \cL^d(\cV(\partial P ,3\delta))\leq \delta \cL^d(P)\, .
\end{align}
Let us now estimate the probability that $|\Gamma_n|_o$ is abnormally big. Using inequalities \eqref{eq44} and \eqref{eq45}, we get 
\begin{align}\label{blab1}
&\Prb[|\Gamma_n|_o\geq (1+\delta)\cI_p(P) n^{d-1}\,|\, 0\in\sC_\infty]\nonumber\\
&\hspace{0.2cm}\leq\frac{1}{\theta_p}\Prb\Big[\card(M)+\sum_{i=1}^m\tau_p(n(F_i+\delta v_i),\delta n)\geq (1+\delta)\sum_{i=1}^m \beta_p(v_i)\cH^{d-1}(F_i)n^{d-1}\Big]\nonumber\\
&\hspace{0.2cm}\leq\frac{1}{\theta_p}\Prb\left[\sum_{i=1}^m\tau_p(n(F_i+\delta v_i),\delta n)\geq (1+\delta/2)\sum_{i=1}^m \beta_p(v_i)\cH^{d-1}(F_i)n^{d-1}\right]\nonumber\\
&\hspace{0.2cm}\leq \frac{1}{\theta_p}\sum_{i=1}^m \Prb[\tau_p(n(F_i+\delta v_i),\delta n)\geq (1+\delta/2)\beta_p(v_i)\cH ^{d-1}(F_i)n ^{d-1}]\,.
\end{align}
By Lemma \ref{lem2}, there exist positive constants $C_1$, $C_2$ depending on $d$, $p$, $P$ and $\delta$ such that, for all $1\leq i\leq m$,
\begin{align}\label{blab2}
\Prb[\tau_p(n(F_i+\delta v_i),\delta n)\geq (1+\delta/2)\beta_p(v_i)\cH ^{d-1}(F_i)n ^{d-1}]\leq C_1\exp(-C_2\delta n^{d})\,.
\end{align}
Finally, combining inequalities \eqref{blab1} and \eqref{blab2}, we obtain
\begin{align}\label{eq444}
\Prb[|\Gamma_n|_o\geq (1+\delta)\cI_p(P) n^{d-1}]\leq\frac{mC_1}{\theta_p} \exp(-C_2\delta n^{d})\,.
\end{align}

We shall now estimate the number of vertices in $H_n$ in order to check that $H_n$ is a valid subgraph. For that purpose, we use a renormalization argument. Let $k>0$. We partition $\sR^d$ into disjoint cubes of side length $1/k$. We define $B'_j$ as the union of $B_j$ and all its $3^d-1$ $*$-neighbors (the cubes $B$ having at least one vertex at $L^1$ distance less than $1$ from $B_j$). We consider $B_1,\dots,B_{l_1}$ the cubes such that $B'_1,\dots,B'_{l_1}$ are contained in $P\setminus \cV(\partial P, 2\delta)$ and $B_{l_1+1},\dots,B_{l_2}$ the cubes such that $B'_{l_1+1},\dots,B'_{l_2}$  intersect $\cV(\partial P, 2\delta)$.
We can choose $k$ large enough such that 
\begin{align}\label{eqbis}
\cL^d\left( \bigcup_{i=l_1+1}^{l_2}B_i\right)\leq  \cL^d(\cV(\partial P ,3\delta))\leq \delta \cL^d(P)\,.
\end{align}
We say that a cube $B_j$ is good if the following event $\cE_n^{(j)}$ occurs:
\begin{itemize}
\item There exists a unique open cluster of diameter larger than $n/k$ in $nB'_j$.
\item  We have $\dfrac{|\sC_\infty\cap nB_j|}{\cL^d(nB_j)}\in (\theta_p-\delta,\theta_p+\delta)\,.$
\end{itemize}
There exist positive constants $C_1$ and $C_2$ depending on $d$, $p$, $k$ and $\delta$ such that
\begin{align}\label{contE_j}
\Prb[\cE_n^{(j)c}]\leq C_1\exp(-C_2n) \,.
\end{align}
For a proof of the control of the probability of the first property see Theorem 7.68 in \cite{Grimmett99} or \cite{Pisztora}, for the second property see \cite{Pisztora}.
\noindent If the cube $B_j$ is good, we denote by $C_j$ its unique open cluster of diameter larger than $n/k$ in $nB'_j$, for $1\leq j\leq l_1$. On the event \smash{$\bigcap_{1\leq j\leq l_1} \cE_n^{(j)}\cap\big\{\,0\in\sC_\infty\,\big\}$}, the set \smash{$\bigcup_{j=1}^{l_1}C_j$} is connected without using edges of $\Gamma_n$ and contains $0$, therefore, it is a subgraph of $H_n$.  Furthermore, we claim that, on this event, we have $\sC_\infty\cap (\bigcup_ {1\leq j\leq l_1} nB_j)\subset H_n$. Indeed, let us assume that there exists $x\in \sC_\infty\cap (\bigcup_ {1\leq j\leq l_1} nB_j)$ that does not belong to $H_n$. Both $0$ and $x$ belong to $\sC_\infty$, therefore, $x$ is connected to $0$ by a path $\gamma=(x_0,e_1,\dots,e_l,x_l)$ with $x_0=0$ and $x_l=x$ that uses edges in $\Gamma_n$. We define $$r=\sup\big\{\,i\geq 1,\, e_i\in \Gamma_n\,\big\}\,.$$ 
\begin{figure}[H]
\def\svgwidth{0.7\textwidth}
\begin{center}
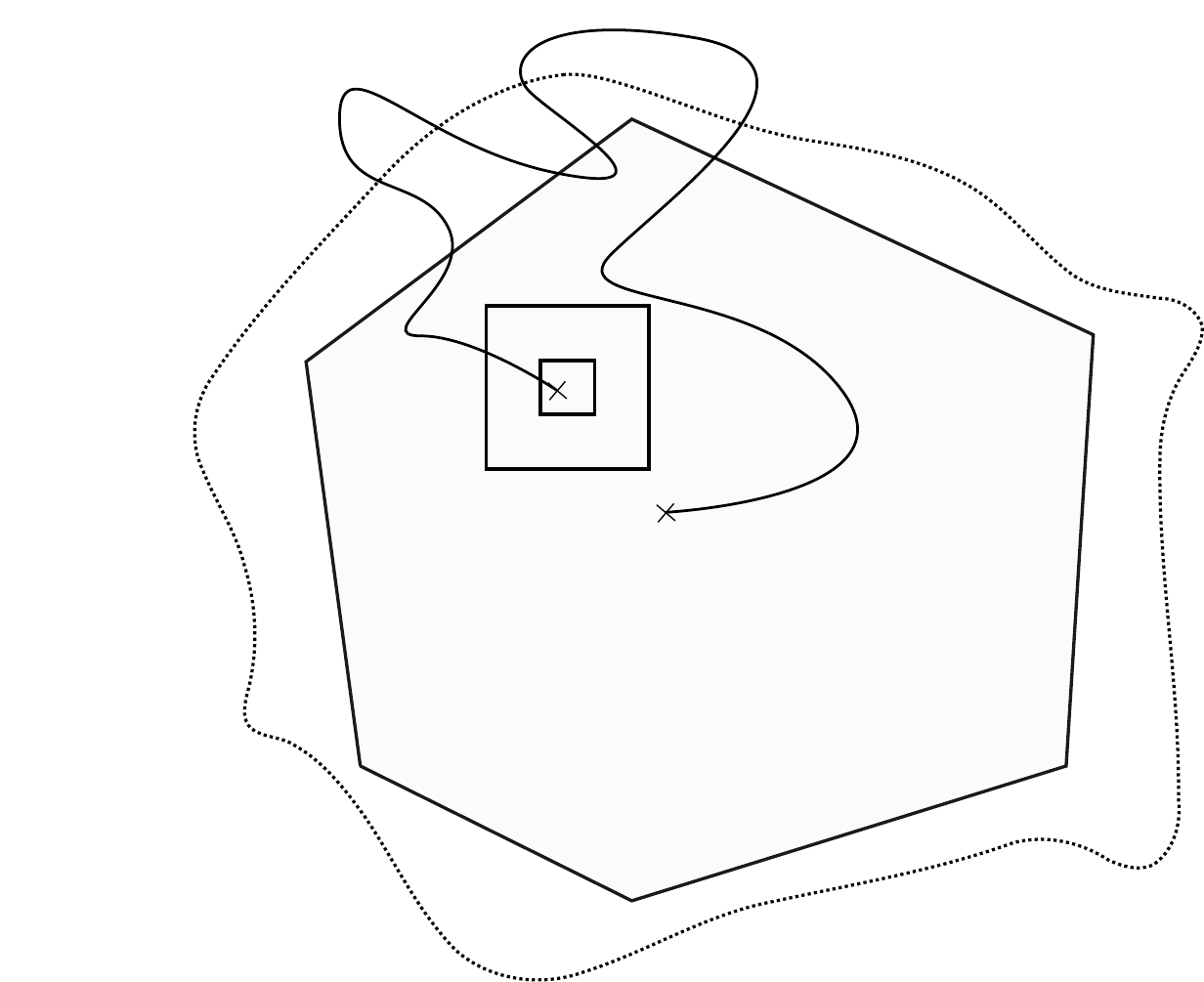
\caption[fig6]{\label{fig6}Vertices in $H_n$}
\end{center}
\end{figure}
\noindent By construction, as $e_l\notin \Gamma_n$, we have $r<l$. Let us denote $\gamma'=(x_r,e_{r+1},\dots,x_l)$. The path $\gamma'$ is not connected to $H_n$ without using edges in $\Gamma_n$ (see figure \ref{fig6}). Let $j$ such that $x\in nB_j$, by construction $x_r$ is outside $nB'_j$. Moreover, on the event $\smash{\cE_n^{(j)}}$, the cube $nB'_j$ contains a unique cluster of diameter larger than $n/k$. As the path $\gamma'$ starts outside $nB'_j$ and ends inside $nB_j$, its intersection with $nB'_j$ has a diameter larger than $n/k$. Besides, the path $\gamma'$ is not connected to $H_n$ in $nB'_j$ by an open path, so the cube $nB'_j$ contains two open clusters of diameter larger than $n/k$. This is a contradiction with the first property of a good cube.
Therefore, on the event $\bigcap_{1\leq j\leq l_1} \cE_n^{(j)}\cap \big\{\,0\in\sC_\infty\,\big\}$,
\begin{align}\label{eq45'}
|H_n|&\geq |\sC_\infty\cap (\cup_ {1\leq j\leq l_1} nB_j)|\nonumber\\
&\geq (\theta_p-\delta)\sum_{i=1}^{l_1}\cL^d(nB_i)\,. 
\end{align}
Thanks to inequalities \eqref{eqbis} and \eqref{eq45'}, we obtain
\begin{align}\label{eq46}
|H_n|\geq (\theta_p-\delta)(1-\delta)\cL^d(nP)\,.
\end{align}
To ensure that $H_n$ is a valid subgraph, it remains to check that $|H_n|\leq n^d$, yet we have
\begin{align*}
|H_n|&\leq (\theta_p+\delta)\sum_{i=1}^{l_1}\cL^d(nB_i) +\sum_{i=l_1+1}^{l_2}\cL^d(nB_i)\\
&\leq (\theta_p+\delta)n^d\cL^d(P)+n^d\delta \cL^d(P)\\
&\leq (\theta_p+2\delta)n^d\cL^d(P)\,.
\end{align*}
As $\cL^d(P)<\cL^d(W_p)$, we can choose $\delta$ small enough such that
$$|H_n|\leq \theta_p\cL^d(W_p)n^d\leq n^d\,.$$
Finally, on the event
$$\bigcap_{1\leq j\leq l_1} \cE_n^{(j)}\cap \big\{\,|\Gamma_n|_o\leq (1+\delta)\cI_p(P) n^{d-1}\,\big\}\cap \big\{\,0\in\sC_\infty\,\big\}\,,$$
combining \eqref{eq44} and \eqref{eq46}, we obtain, for small enough $\delta$,
$$n\varphi_n\leq \frac{|\Gamma_n|_o}{|H_n|}\leq (1+\delta)\dfrac{\cI_p(P)}{(\theta_p-2\delta)(1-\delta)\cL^d(P)}\leq (1+\ep/2)\dfrac{\cI_p(P)}{\theta_p\cL^d(P)}\,.$$
Combining the result of Lemma \ref{lem2} and inequalities \eqref{bla}, \eqref{eq444} and \eqref{contE_j}, we obtain
\begin{align*}
&\Prb\left[n\varphi_n \geq (1+\ep) \dfrac{\cI_p(W_p)}{\theta_p(d)\cL^d(W_p)}\Big |0\in\sC_\infty\right]\\&\hspace{5cm}\leq \frac{l_1 C_1}{\theta_p}\exp(-C_2n)+\frac{mC_1}{\theta_p}\exp(-C_2\delta n^{d})\,.
\end{align*}
This yields the result. 
\end{proof}

\section{Construction of a continuous object}\label{sectconstruction}
The aim of this section is to build a continuous object $P_n$ from a minimizer $G_n\in\cG_n$.
\subsection{Some useful results on the minimizers}
The following lemma ensures that the size of the minimizers $G_n\in\cG_n$ are of order $n^d$. 
\begin{lem}\label{nottoobig}Let $d\geq 2$ and $p>p_c(d)$. There exist positive constants $D_1,\,D_2$ and $\eta_1$ depending only on $d$ and $p$ such that, for all $n\geq 1$,
$$\Prb\Big[\,\exists G_n\in\cG_n,\, |G_n|\leq \eta_1n^d\,\big|\,0\in\sC_\infty\,\Big]\leq D_1\exp(-D_2n^{(d-1)/2d)})\,.$$
\end{lem}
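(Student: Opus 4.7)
The plan is a contradiction argument. If some minimizer $G_n \in \cG_n$ had size at most $\eta_1 n^d$, then its isoperimetric ratio $|\partial^o G_n|/|G_n|$ would be forced to exceed $M/n$ for an appropriate constant $M$, contradicting the upper large deviations bound of Theorem \ref{ULD}. Concretely, I set $M := 2\,\cI_p(W_p)$ and apply Theorem \ref{ULD} with $\ep = 1$, using $\theta_p \cL^d(W_p) = 1$: conditional on $\{0 \in \sC_\infty\}$, the event $\{n\varphi_n \leq M\}$ holds except on a set of probability at most $C_1 e^{-C_2 n}$.

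I then split the putative minimizer $G_n$ into two regimes at the threshold $n^{1/2}$. In the small-size regime $|G_n| < n^{1/2}$, the trivial fact that any finite subset of the infinite connected cluster $\sC_\infty$ has at least one open boundary edge gives $|\partial^o G_n|/|G_n| \geq 1/n^{1/2}$, and so $n\varphi_n \geq n^{1/2}$, which exceeds $M$ for large $n$. In the moderate-size regime $n^{1/2} \leq |G_n| \leq \eta_1 n^d$, I invoke an anchored isoperimetric inequality in $\sC_\infty$: there exists $c_{iso} = c_{iso}(p,d) > 0$ such that, except on an event of probability at most $D_1 e^{-D_2 n^{(d-1)/(2d)}}$, every finite connected $H \subset \sC_\infty$ containing $0$ with $|H| \geq n^{1/2}$ satisfies $|\partial^o H| \geq c_{iso}|H|^{(d-1)/d}$. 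Applied to $H = G_n$, this yields $n\varphi_n \geq c_{iso} n |G_n|^{-1/d} \geq c_{iso} \eta_1^{-1/d}$, and choosing $\eta_1 := (c_{iso}/(2M))^d$ produces $n\varphi_n \geq 2M$, contradicting the upper bound. Combining the two probability estimates gives the desired tail of order $\exp(-D_2 n^{(d-1)/(2d)})$.

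The main obstacle is establishing the anchored isoperimetric inequality with the required deviation rate. A naive union bound over lattice animals is too weak, because the entropy $(7^d)^k$ of size-$k$ animals containing $0$ (Lemma \ref{tailleanimal}) dominates the Chernoff estimate $\exp(-c\,k^{(d-1)/d})$ for the open boundary of a fixed animal. The standard remedy is a renormalization argument in the spirit of Benjamini--Mossel \cite{benjamini2003mixing}, Mathieu--Remy \cite{mathieu2004isoperimetry}, Berger--Biskup--Hoffman--Kozma \cite{berger2008anomalous} and Pete \cite{ECP1390}: at a sufficiently large scale $L$, blocks containing a unique large open crossing cluster of roughly the expected density form a supercritical site percolation on the renormalized lattice, so the coarse-grained boundary of $H$ at scale $L$ inherits the classical $\sZ^d$-isoperimetric inequality while each coarse boundary face contributes $\Omega(L^{d-1})$ closed edges to $\partial H \setminus \partial^o H$. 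The non-optimal exponent $(d-1)/(2d)$ arises from balancing this renormalized estimate against the trivial small-size bound precisely at the cutoff $|H| \asymp n^{1/2}$; this rate matches the (also non-optimal) deviation rate appearing in Theorem \ref{LLD}, and is amply sufficient for the remaining applications in the paper.
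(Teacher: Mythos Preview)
Your proof is correct and follows essentially the same route as the paper's: split at the threshold $|G_n|\asymp n^{1/2}$, use the trivial bound $|\partial^o G_n|\geq 1$ below it, and invoke an anchored isoperimetric inequality $|\partial^o H|\geq c\,|H|^{(d-1)/d}$ above it, then contrast with the upper bound on $n\varphi_n$ from Theorem~\ref{ULD}. The only difference is that the paper does not re-derive the anchored isoperimetric inequality by renormalization but simply quotes it as a black box---it is stated in the paper as Proposition~\ref{propv} (Proposition~5.2 of Berger--Biskup--Hoffman--Kozma~\cite{berger2008anomalous}), applied with $t=n^{(d-1)/(2d)}$ so that the threshold $t^{d/(d-1)}=n^{1/2}$ and the failure probability is $c_1\exp(-c_2 n^{(d-1)/(2d)})$; your sketch of the renormalization argument is therefore unnecessary.
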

To prove Lemma \ref{nottoobig}, we adapt the proof of Lemma A.8 in \cite{Gold2016}. We need the following proposition that ensures that the open edge boundary of a large subgraph is not too small.
\begin{prop}[Berger-Biskup-Hoffman-Kozma, Proposition 5.2. in \cite{berger2008anomalous}]\label{propv}Let $d\geq 2$ and $p>p_c(d)$. There exist positive constants $c_1,\,c_2$ and $c_3$ depending only on $d$ and $p$ such that, for all $t\geq 0$,
$$\Prb\left[\begin{array}{c}\text{There exists an open connected graph containing $0$}\\\text{ such that $|G|\geq t^{d/(d-1)}$, $|\partial^o G|\leq c_3|G|^{(d-1)/d}$}\end{array}\right]\leq c_1\exp(-c_2t)\,.$$
\end{prop}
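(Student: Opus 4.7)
The approach is to combine the classical $\sZ^d$ edge-isoperimetric inequality with a Peierls-type argument at a mesoscopic scale, exploiting the subcritical density of ``bad'' blocks in a renormalized supercritical percolation configuration. The isoperimetric inequality in $\sZ^d$ yields a universal constant $c_d>0$ such that every finite connected subgraph $H$ of $\sZ^d$ containing $0$ satisfies $|\partial H|\geq c_d|H|^{(d-1)/d}$; I would pick $c_3:=c_d/2$. On the event of the proposition, every candidate $G$ then obeys
$$|\partial^c G|\;:=\;|\partial G|-|\partial^o G|\;\geq\;\tfrac{c_d}{2}\,|G|^{(d-1)/d}\;\geq\;\tfrac{c_d}{2}\,t,$$
so that $\Omega(t)$ boundary edges are forced to be closed. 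The task reduces to bounding the probability that some connected open $G\ni 0$ of size at least $t^{d/(d-1)}$ has this many closed edges on its boundary.

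Next, I fix a mesoscopic scale $K$ and declare $z\in\sZ^d$ \emph{good} if the percolation in the enlarged block $B'_K(z)=Kz+[-K,2K)^d$ is typical: there is a unique open crossing cluster of diameter at least $K/10$ in $B'_K(z)$, it has density at least $\theta_p/2$ in $B_K(z)=Kz+[0,K)^d$, and every open cluster in $B'_K(z)$ of diameter at least $K/10$ belongs to it. By the Antal--Pisztora and Pisztora renormalization estimates, the bad sites are stochastically dominated by a finitely-dependent Bernoulli field whose parameter $q(K)$ decays like $\exp(-cK^{d-1})$, putting the bad process in the subcritical regime for $K$ large. The essential local claim is that, if $B_K(z)$ is good and $\partial G$ crosses it in the sense that $G$ occupies a macroscopic fraction of $B_K(z)$ and leaves a macroscopic complement, then $|\partial^o G\cap B_K(z)|\gtrsim K^{d-1}$, because order $K^{d-1}$ disjoint open paths of the crossing cluster must cross $\partial G$ inside the block.

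Finally, I coarse-grain the outer $*$-connected component $\Gamma$ of $\partial G$ at scale $K$. This component is a lattice animal separating $0$ from infinity, of cardinality $|\Gamma|\geq c_d t$; Lemma \ref{tailleanimal} bounds the number of such $\Gamma$ of size $\ell$ by $(7^d)^\ell$, and at the renormalized scale this produces at most $C^{N}$ coarse contours, where $N\gtrsim t/K^d$ is the number of $K$-blocks traversed. If $|\partial^o G|\leq c_3|G|^{(d-1)/d}$ while a fraction $1-\alpha$ of the crossed blocks were good, the local claim would force $|\partial^o G|\gtrsim (1-\alpha)NK^{d-1}$, contradicting the hypothesis for $\alpha$ small and $K$ large. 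Hence a proportion $\geq\alpha$ of the crossed blocks must be bad; by stochastic domination this costs $q(K)^{\alpha N}$, and summing over coarse contours gives a bound of the form $(C\,q(K)^{\alpha/K^{d-1}})^{|\Gamma|}\leq\exp(-c_2 t)$ once $K$ is large enough to absorb the combinatorial factor.

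The main obstacle is the local flow-type claim underlying the middle paragraph: making rigorous that a good $K$-block crossed by $\partial G$ must absorb $\gtrsim K^{d-1}$ open boundary edges. This uses quantitatively the uniqueness and density of the crossing cluster in $B'_K(z)$ combined with a discrete min-cut argument: since both $G\cap B_K(z)$ and $B_K(z)\setminus G$ are macroscopic, any cut between them inside the crossing cluster contains $\Omega(K^{d-1})$ open edges, all of which belong to $\partial^o G$. Once this localized isoperimetric estimate is secured, the remainder is a standard multiscale Peierls computation.
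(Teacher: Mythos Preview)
The paper does not supply its own proof of this proposition: it is quoted verbatim as Proposition~5.2 of Berger--Biskup--Hoffman--Kozma \cite{berger2008anomalous} and used as a black box (in the proofs of Lemma~\ref{nottoobig} and Proposition~\ref{propcompcon}). So there is nothing in the paper to compare your argument against.

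For what it is worth, your sketch follows the standard architecture of such results and is broadly in the spirit of the original proof: reduce via the lattice isoperimetric inequality to ``many closed boundary edges'', renormalize at a mesoscopic scale $K$ so that bad blocks are subcritically sparse, and run a Peierls estimate on the coarse-grained contour. You correctly identify the crux as the local claim that a good block traversed by $\partial G$ must contain $\gtrsim K^{d-1}$ open boundary edges; as written this step is only heuristic. In particular, the assertion that ``both $G\cap B_K(z)$ and $B_K(z)\setminus G$ are macroscopic'' does not follow merely from $\partial G$ meeting $B_K(z)$: the contour may only graze the block. One typically needs either an averaging over the contour (so that \emph{most} crossed blocks see a macroscopic interface) or a more careful geometric classification of how the outer boundary intersects blocks. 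Until that localization is made precise, the final inequality $(C\,q(K)^{\alpha/K^{d-1}})^{|\Gamma|}\le e^{-c_2 t}$ is not justified.
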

\begin{proof}[Proof of Lemma \ref{nottoobig}]
Thanks to Theorem \ref{ULD}, there exist positive constants $c'_1$, $c'_2$ and $c'_3$ depending only on $p$ and $d$ such that for all $n\geq 1$,
 $$\Prb\Big[\,\varphi_n\geq c'_3n^{-1}\,\big|\,0\in\sC_\infty\,\Big]\leq c'_1\exp(-c'_2n)\,.$$ Let $G_n\in\cG_n$.
If $|G_n|\leq \sqrt{n}$, as $G_n\subset \sC_\infty$ the set $\partial^oG_n$ is non empty on the event $\{0\in\sC_\infty\}$ and so $\varphi_n \geq n^{-1/2}$. This is impossible for large $n$.
We now assume $|G_n|>\sqrt{n}$. Using Proposition \ref{propv} with $t=n^{(d-1)/2d}$, conditioning on $\{0\in\sC_\infty\}$, we obtain that $|\partial^o G_n|\geq c_3|G_n|^{(d-1)/d}$ with probability at least $1-c_1\exp(-c_2n^{(d-1)/2d})/\theta_p$. Moreover, on the event $\big\{\varphi_n\leq c'_3n^{-1}\big\}\cap\big\{0\in\sC_\infty\big\}$, we obtain 
$$ c_3 |G_n|^{-1/d}\leq \frac{|\partial^oG_n|}{|G_n|}=\varphi_n\leq c'_3n^{-1}.$$
So we set $\eta_1=(c_3/c'_3)^d$.
Finally,
\begin{align*}
\Prb&\Big[\exists G_n\in\cG_n,\, |G_n|\leq \eta_1n^d\,\big|\,0\in\sC_\infty\,\Big]\\
 &\hspace{2cm}\leq\Prb\Big[\,\varphi_n\geq c'_3n^{-1}\,\big|\,0\in\sC_\infty\,\Big]+\frac{c_1}{\theta_p}\exp(-c_2n^{(d-1)/2d})\\
&\hspace{2cm} \leq c'_1\exp(-c'_2n)+\frac{c_1}{\theta_p}\exp(-c_2n^{(d-1)/2d})\,.
\end{align*}
This yields the result.
\end{proof}

\subsection{Construction of a continuous set}
To study the upper large deviations, we needed to go from a continuous object to a discrete object. In this section, we do the opposite. From now on, we will always condition on the event $\{0\in\sC_\infty\}$. We start with $G_n\in\cG_n$ and we build a continuous object $P_n$. Our goal is to build a continuous object of finite perimeter which is close to $n^{-(d-1)}|\partial^o G_n|$. Although it seems natural to take the continuous object $P_n=n^{-1}(G_n+[-1/2,1/2]^d)$, this turns out to be a bad choice because the boundary $\partial G_n$ may be very tangled and its size may be of higher order than $n^{d-1}$. We will build from $G_n$ a graph $F_n$ with a smoother boundary $\Gamma_n\subset \E^d$ in order to build the continuous object $P_n$. At this point, there is some work left. If we consider the subgraph $F_n$ that contains all the vertices in $\sC_\infty$ enclosed in $\Gamma_n$, the symmetric difference $F_n\Delta G_n$ may be big due to the presence of holes in $G_n$, more precisely portions of $\sC_\infty$ enclosed in $\Gamma_n$ but not contained in $G_n$ (see Figure \ref{fig5}). Indeed, if these holes are too large, the symmetric difference $F_n\Delta G_n$ will be large too. However, we cannot keep all the holes in $G_n$ to build $F_n$ because when we will pass to a continuous object $P_n$, these holes will considerably increase the perimeter of $P_n$ so that $P_n$ may have a too large perimeter. The solution is to fill only the small holes to obtain $F_n$ so that the perimeter of $P_n$ remains of the correct order and the symmetric difference $F_n\Delta G_n$ remains small. In order to do so, we shall perform Zhang's construction in \cite{Zhang2017} to obtain a smooth boundary $\Gamma_n$ for $G_n$ but also to surgically remove these large holes from $G_n$ by cutting along a smooth boundary. This work was done in \cite{Gold2016}. We will only partially sketch Zhang's construction and we refer to  \cite{Zhang2017} for a rigorous proof and more details about the construction. Although we did the same construction as Gold in \cite{Gold2016}, we do not use the same argument to conclude. Gold used a procedure called webbing to link all the different contours together in order to obtain a single connected object, this simplifies the combinatorial estimates. Here, we do not perform the webbing procedure, instead we use adequate combinatorial estimates. Avoiding the webbing procedure enables us to extend the result to dimension $2$.

Let us define a renormalization process. For a large integer $k$, that will be chosen later, we set $B_k=[-k,k[^d\cap \sZ^d$ and define the following family of $k$-cubes, for $\textbf{i}\in \sZ^d$,
$$B_k(\textbf{i})=\tau_{\textbf{i}(2k+1)}(B_k)\,,$$
where $\tau_b$ denotes the shift in $\sZ^d$ with vector $b\in\sZ^d$. The lattice $\sZ^d$ is the disjoint union of this family: $\sZ^d=\sqcup_{\textbf{i}\in\sZ^d}B_k(\textbf{i})$. We introduce larger boxes $B'_k$, for $\textbf{i}\in \sZ^d$, we define
$$B'_k(\textbf{i})=\tau_{\textbf{i}(2k+1)}(B_{3k}).$$
  Underscore will be used to denote sets of cubes. For any set of $k$-cubes $\underline{A}$, the set $\underline{A}'$ denotes the set of the corresponding $3k$-cubes. Let $G_n\in\cG_n$. We first use Zhang's construction to build a smooth cutset $\Gamma_n$ that separates $G_n$ from infinity. We denote by $\underline{A}$ the set of $k$-cubes that intersect $\partial_e G_n$, the exterior edge boundary of $G_n$. We then modify the current configuration $\omega$ into a configuration $\omega'$ by closing all the open edges in $\partial G_n$. This procedure is only formal as we will eventually reopen these edges. Zhang's construction enables us  to extract a set of cubes $\underline{\Gamma} \subset \underline{A}$ such that $\underline{\Gamma}$ is $*$-connected and in the configuration $\omega'$, the union of the $3k$-cubes of $\underline{\Gamma}'$ contains a closed cutset $\Gamma_n$ that isolates $G_n$ from infinity and a rare event occurs in every cube of $\underline{\Gamma}$. These rare events are due to the existence of a closed cutset that creates a large interface of closed edges, this is a very unlikely event when $p>p_c(d)$.  Of course, when we will eventually switch back to the configuration $\omega$, these rare events will not occur anymore in some cubes.

Several connected components of $\sC_\infty \setminus G_n$ in $\sZ^d\setminus \Gamma_n$ are enclosed in $\Gamma_n$ (see Figure \ref{fig5}). We say that a connected component $C$ of $\sC_\infty$ is surrounded by $\Gamma_n$ if any path from $C$ to infinity has to use an edge of $\Gamma_n$. We will say that $C$ is large if $|C|\geq n^{1-1/2(d-1)}$.
We enumerate the large connected components $L_1,\dots, L_m$ and the small connected component $S_1,\dots,S_N$. We denote by $m(G_n)$ the number of large connected components of $\sC_\infty \setminus G_n$ enclosed in $\Gamma_n$. 
\begin{rk}
We insist here on the fact that these large components are not holes of the infinite cluster but holes of $G_n$ (see Figure \ref{fig5}). Intuitively, we do not expect that a minimizer contains such holes because the graph obtained by filling all these holes have a smaller isoperimetric ratio. Indeed, by filling these holes, we reduce the open edge boundary and increase the volume. However, by filling these holes, the volume may exceed $n^d$ and the graph we obtain by filling these holes may not be admissible. That is the reason why we cannot easily discard the presence of these large holes inside $G_n$. To obtain the proper order of large deviations, one would have to fix this issue.
\end{rk}
 We then build $F_n\subset \sC_\infty$ by filling the small connected components $S_1,\dots,S_N$ of $G_n$, \textit{i.e.}, 
\begin{align}\label{jsap5}
F_n=G_n\cup\bigcup_{i=1}^NS_i\,.
\end{align}
 At this point, the boundary $\partial F_n \setminus \partial_e F_n$ of $F_n$ may be still tangled around the large components. In the configuration $\omega'$, for each $1\leq j\leq m$, there exists a closed cutset that separates $L_j$ from infinity. We can apply Zhang's construction to each component $L_j$ in order to build a smooth closed cutset \smash{$\widehat{\Gamma}_n^{(j)}$} and its corresponding set of $k$-cubes \smash{$\widehat{\underline{\Gamma}}_n^{(j)}$}. Thanks to Zhang's construction, the set of cubes \smash{$\underline{\widehat{\Gamma}}_n^{(j)}$} is $*$-connected and in the configuration $\omega'$, a rare event occurs in each of its cubes. We denote the boundary of $F_n$ by $\widetilde{\Gamma}_n$ and its associated set of $k$-cubes $\underline{\widetilde{\Gamma}}_n$ as
$$\widetilde{\Gamma}_n =\Gamma_n\cup \bigcup_{i=1}^m\widehat{\Gamma}_n^{(i)},\,\hspace{1cm}\underline{\widetilde{\Gamma}}_n =\underline{\Gamma}_n\cup \bigcup_{i=1}^m\underline{\widehat{\Gamma}}_n^{(i)}\,.$$
\begin{figure}[H]
\def\svgwidth{1\textwidth}
\begin{center}
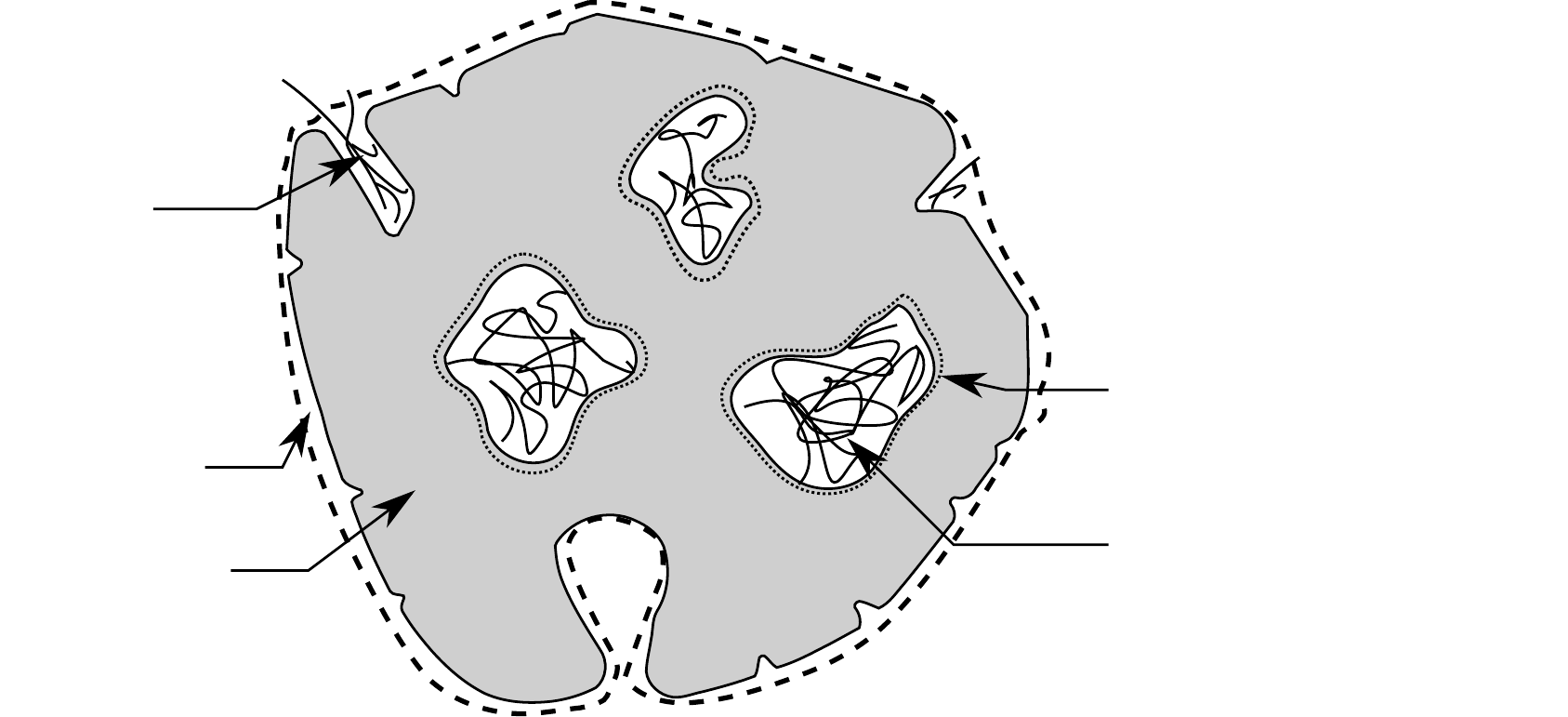
\caption[fig5]{\label{fig5}Construction of $\widetilde{\Gamma}_n$ for a $G_n\in\cG_n$}
\end{center}
\end{figure}
\noindent The set of $k$-cubes  $\smash{\underline{\widetilde{\Gamma}}_n}$ is not $*$-connected. It only contains cubes where a rare event occurs in the configuration $\omega'$. Although for some cubes these events do not occur anymore in the configuration $\omega$, we can bound from below the number of cubes that remain unchanged by $|\smash{\underline{\widetilde{\Gamma}}_n}|-|\partial ^o G_n|$. In these cubes, rare events still occur when we switch back to the original configuration $\omega$. Using a Peierls argument, we can deduce that, with high probability, $|\underline{\widetilde{\Gamma}}_n|$ and $|\partial ^o G_n|$ are of same order when $k$ is taken large enough. To perform the combinatorial estimates we will need the two following propositions.
\begin{prop}[Lemmas 6, 7 in \cite{Zhang2017}]\label{contprbmauvais}Let $d\geq2$ and let $p>p_c(d)$. There exist positive constants $C_1$ and $C_2$ depending only on $p$ and $d$ such that for each $k$-cube $B_k$,
$$\Prb(\text{a rare event occurs in }B_k)\leq C_1\e^{-C_2k}\,.$$
Moreover, this rare event depends only on the configuration of the $3k$-cube $B'_k$.
\end{prop}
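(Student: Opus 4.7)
The plan is first to make the rare event explicit in a way that makes both assertions of the proposition transparent. A natural choice, compatible with Zhang's closed-cutset construction as recalled above, is to declare that the rare event at $B_k$ asserts the existence, inside $B'_k$, of a $*$-connected set $\Lambda$ of closed edges of cardinality at least $c_d k$, one of whose edges has an endpoint in $B_k$. All edges of $\Lambda$ have both endpoints in $B'_k$, so the event is measurable in the edges of $B'_k$, which gives the second claim.

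For the probability bound I would first run a Peierls argument. The analogue of Lemma \ref{tailleanimal} for edges (same proof, applied to the line graph) gives that the number of $*$-connected sets of $\ell$ edges containing a prescribed edge is at most $C_d^\ell$, while $\Prb(\Lambda \textrm{ is closed}) = (1-p)^\ell$ for any fixed set of $\ell$ edges. Summing over the $O(k^d)$ seed edges having an endpoint in $B_k$ and over $\ell\geq c_d k$ yields $C_1\e^{-C_2 k}$ as soon as $(1-p)C_d<1$, which only covers $p$ close to $1$.

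The main obstacle is to extend this bound to every $p>p_c(d)$, and this is where I would invest the real work. To do so I would renormalize at a mesoscopic scale $N$ as in \cite{Pisztora}: declare an $N$-cube good if it hosts a unique crossing open cluster in all directions and if this cluster has density close to $\theta_p$. By the Grimmett--Marstrand slab theorem combined with the estimates of \cite{Pisztora}, the probability that an $N$-cube is bad decays exponentially in $N$, and by stochastic domination the good-cube process dominates a Bernoulli site percolation whose parameter tends to $1$ as $N\to\infty$. A closed $*$-connected set $\Lambda$ of the size considered above cannot traverse a good $N$-cube without contradicting the uniqueness of its crossing cluster, so the presence of such $\Lambda$ forces a $*$-connected path of bad $N$-cubes of length of order $k/N$ inside $B'_k$. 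Choosing $N$ large enough that the naive Peierls argument applies to the good-cube process at that scale delivers the estimate $C_1\e^{-C_2 k}$ with constants depending only on $p$ and $d$, uniformly in $p>p_c(d)$. The locality statement is preserved because everything happens within $B'_k$ up to a fixed additive enlargement, which can be absorbed into the constants provided $k$ is larger than a constant multiple of $N$.
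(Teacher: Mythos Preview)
The paper does not prove this proposition at all: it is quoted verbatim as ``Lemmas 6, 7 in \cite{Zhang2017}'', and the remark immediately following it states explicitly that the rare events are \emph{not} defined in the paper and that the reader is referred to \cite{Zhang2017}. So there is no ``paper's own proof'' to compare against; the statement is an imported black box.

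Your proposal is therefore an attempt to reconstruct that black box. Two points deserve attention. First, the rare event is not a free parameter: it is fixed by Zhang's construction, because the paper later uses that in the modified configuration $\omega'$ the rare event occurs in \emph{every} cube of $\underline{\Gamma}$, and that switching back from $\omega'$ to $\omega$ destroys the rare event in at most $|\partial^o G_n|$ cubes. Your ad hoc choice (a $*$-connected set of $\geq c_d k$ closed edges meeting $B_k$) may or may not have these compatibility properties; you would have to check them, and in fact Zhang's actual events are somewhat different (they involve the failure of certain ``blocked/disjoint'' properties of open clusters, not merely the presence of many closed edges).

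Second, and more concretely, your renormalization step contains a gap. You claim that a large $*$-connected set of closed edges ``cannot traverse a good $N$-cube without contradicting the uniqueness of its crossing cluster''. In dimension $d\geq 3$ this is false: a thin $*$-connected column of closed edges can run through a box while the open crossing cluster simply goes around it, and uniqueness is unaffected. Duality arguments of this type work in $d=2$ but not in higher dimensions; that is precisely why Zhang's events are formulated in terms of open-cluster structure rather than closed-edge structure. So as written, the extension from $p$ close to $1$ to all $p>p_c(d)$ is not justified. The right move, if you want a self-contained argument, is to take the rare events exactly as defined in \cite{Zhang2017} and use the estimates proved there; the paper's decision to cite rather than reprove is the standard one.
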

\begin{rk} We do not define here what these rare events are, we refer to \cite{Zhang2017} for a precise definition of these rare events. For our purpose we only need to know that the decay is exponential in $k$. We say that a cube is abnormal if a rare event occurs in this cube.
\end{rk}
\begin{prop}\label{propcompcon}
Let $d\geq2$ and $p>p_c(d)$. There exist positive constants $c_1$, $c_2$ and $c_3$ such that 
$$\Prb\left[\exists G_n\in\cG_n,\,m(G_n)>c_3n^{d-2+3/2d}\,\big|\,0\in\sC_\infty\right]\leq c_1\exp(-c_2n^{1-3/2d})\,.$$
\end{prop}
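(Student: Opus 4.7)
The plan is to combine the upper large deviation bound on $\varphi_n$ from Theorem \ref{ULD} with the isoperimetric-type bound from Proposition \ref{propv}, via the observation that each $L_j$, being an entire connected component of $\sC_\infty \setminus G_n$, has all its open boundary edges landing in $G_n$: thus $\partial^o L_j \subset \partial^o G_n$ with the $\partial^o L_j$'s pairwise disjoint. This reduces the problem of bounding $m(G_n)$ to lower-bounding each $|\partial^o L_j|$ while upper-bounding $|\partial^o G_n|$.

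For the upper bound, Theorem \ref{ULD} applied with $\ep = 1$ yields $|\partial^o G_n| = \varphi_n |G_n| \leq 2\cI_p(W_p)\, n^{d-1}$ with probability at least $1 - C_1 e^{-C_2 n}$, using $|G_n| \leq n^d$. For the lower bound I set $t_n = n^{(2d-3)/(2d)} = n^{1 - 3/(2d)}$, calibrated so that $t_n^{d/(d-1)} = n^{1 - 1/(2(d-1))}$ matches the size threshold defining large components. Proposition \ref{propv} then asserts that, with probability $\geq 1 - c_1 e^{-c_2 t_n}$, every connected open graph containing a prescribed vertex with at least $t_n^{d/(d-1)}$ vertices has open edge boundary at least $c_3\, n^{(2d-3)/(2d)}$.

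To apply this simultaneously to every large $L_j$, I need a union bound over possible starting vertices. Since $G_n$ is connected, contains $0$, and has at most $n^d$ vertices, it is contained in $[-n^d, n^d]^d$; and since $L_j$ is a proper nonempty component of $\sC_\infty \setminus G_n$ inside the connected cluster $\sC_\infty$, it must contain at least one vertex $y_j$ sharing an open edge with $G_n$, and this $y_j$ lies in $[-n^d - 1, n^d + 1]^d$. By translation invariance of the percolation law, Proposition \ref{propv} applies at every $y$ in this box, and a union bound gives a failure probability of at most $(2n^d + 3)^d c_1 e^{-c_2 t_n}$, which is absorbed into $\exp(-c_2' n^{1 - 3/(2d)})$ since the polynomial prefactor is swallowed by the stretched exponential.

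On the intersection of these good events, every large hole satisfies $|\partial^o L_j| \geq c_3\, n^{(2d-3)/(2d)}$, and summing the disjoint boundaries gives
\begin{equation*}
m(G_n)\, c_3\, n^{(2d-3)/(2d)} \;\leq\; \sum_{j=1}^{m(G_n)} |\partial^o L_j| \;\leq\; |\partial^o G_n| \;\leq\; 2\cI_p(W_p)\, n^{d-1}.
\end{equation*}
Since $d - 1 - (2d-3)/(2d) = d - 2 + 3/(2d)$, this yields the claimed $m(G_n) \leq c_3'\, n^{d - 2 + 3/(2d)}$ with $c_3' = 2\cI_p(W_p)/c_3$. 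Dividing the total failure probability by $\theta_p$ to account for the conditioning on $\{0 \in \sC_\infty\}$ produces the stated bound. The only delicate point is the calibration of $t_n$, which must simultaneously match the size threshold defining large components and deliver the target deviation exponent $1 - 3/(2d)$; once this choice is fixed, the remaining estimates are essentially bookkeeping.
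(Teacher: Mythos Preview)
Your proof is correct and follows essentially the same approach as the paper: bound $|\partial^o G_n|$ from above via Theorem~\ref{ULD}, bound each $|\partial^o L_j|$ from below via Proposition~\ref{propv} applied with a union bound over a box of polynomial size in $n$, and conclude using the disjointness of the $\partial^o L_j$ inside $\partial^o G_n$. Your localization argument---picking a vertex $y_j\in L_j$ adjacent to $G_n$ via an open edge, hence lying in $[-n^d-1,n^d+1]^d$---is slightly more explicit than the paper's assertion that each $L_j$ is contained in $[-n^d,n^d]^d$, but the two are equivalent for the purpose of the union bound.
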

\begin{proof}Thanks to Theorem \ref{ULD}, there exist positive constants $C'_1$, $C'_2$ and $C'_3$ depending only on $p$ and $d$ such that for all $n\geq 1$,
 $$\Prb\Big[\,\varphi_n\geq C'_3n^{-1}\,\big|\,0\in\sC_\infty\,\Big]\leq C'_1\exp(-C'_2n)\,.$$ Let $G_n\in\cG_n$. We have with probability at least $1- C'_1\exp(-C'_2n)$ that
 $$|\partial^o G_n|\leq C'_3n^{-1}|G_n|\leq C'_3n^{d-1}\,.$$
 Thanks to Proposition \ref{propv}, there exist positive constants $c'_1$, $c'_2$ and $c'_3$ depending only on $p$ and $d$ such that, for all $t\geq 0$, we have 
\begin{align}\label{jsap}
\Prb\left[\begin{array}{c}\text{There exists an open connected graph containing $0$}\\\text{ such that $|G|\geq t^{d/(d-1)}$, $|\partial^o G|\leq c'_3|G|^{(d-1)/d}$}\end{array}\right]\leq c'_1\exp(-c'_2t)\,.
\end{align}
 In the following, we set $t=n^{(1-1/2(d-1))(d-1)/d}=n^{1-3/2d}$. First notice that by construction, each $L_j$ is contained in $[-n^d,n^d]\cap\sZ^d$. We have
\begin{align*}
 &\Prb\left[\exists G_n\in\cG_n,\, \exists i\in\{1,\dots,m(G_n)\}, \,|\partial^oL_i|\leq c'_3n^{(1-1/2(d-1))d/(d-1)}\,\big|\,0\in\sC_\infty\,\right]\\
 &\leq \Prb\left[\exists G_n\in\cG_n, \,\exists i\in\{1,\dots,m(G_n)\}, \,|\partial^oL_i|\leq c'_3|L_i|^{d/(d-1)}\,\big|\,0\in\sC_\infty\right]\\
 &\leq \frac{1}{\theta_p}\Prb\left[\begin{array}{c}\text{There exists an open connected graph $G$ contained in }\\ \text{$[-n^d,n^d]\cap\sZ^d$ such that $|G|\geq t^{d/(d-1)}$, $|\partial^o G|\leq c'_3|G|^{(d-1)/d}$}\end{array}\right]\\
 &\leq \frac{1}{\theta_p}\sum_{x\in[-n^d,n^d]\cap\sZ^d}\Prb\left[\begin{array}{c}\text{There exists an open connected graph $G$ containing}\\\text{$x$ such that $|G|\geq t^{d/(d-1)}$, $|\partial^o G|\leq c'_3|G|^{(d-1)/d}$}\end{array}\right].
 \end{align*}
 Using the translation invariance together with inequality \eqref{jsap}, we obtain
 \begin{align*}
 \Prb&\left[\exists G_n\in\cG_n,\, \exists i\in\{1,\dots,m(G_n)\}, \,|\partial^oL_i|\leq c'_3n^{(1-\ep)d/(d-1)}\,\Big|\,0\in\sC_\infty\,\right]\\
 &\hspace{1cm}\leq \frac{(2n^d)^d}{\theta_p}\, \Prb\left[\begin{array}{c}\text{There exists an open connected graph $G$ containing $0$}\\\text{ such that $|G|\geq t^{d/(d-1)}$, $|\partial^o G|\leq c'_3|G|^{(d-1)/d}$}\end{array}\right]\\
 &\hspace{1cm}\leq\frac{(2n^d)^d}{\theta_p}c'_1\exp(-c'_2n^{1-3/2d})\,.
 \end{align*}
 By construction, for all $i\in\{1,\dots,m(G_n)\}$, we have $\partial^o L_i\subset\partial ^o G_n$ and for all $j\in\{1,\dots,m(G_n)\}$ such that $i\neq j$, we have $\partial ^o L_i\cap \partial ^o L_j=\emptyset$. Thus, with high probability, 
 $$m(G_n)\leq \frac{|\partial^oG_n|}{c'_3n^{(1-\ep)d/(d-1)}}\leq \frac{C'_3n^{d-1}}{c'_3n^{1-3/2d}}\leq \frac{C'_3}{c'_3}n^{d-2+3/2d}\,.$$
 Finally, by setting $c_3=C'_3/c'_3$, we obtain
 \begin{align*}
 \Prb&\left[\exists G_n\in\cG_n, \,m(G_n)>c_3n^{d-2+3/2d}\,\big|\,0\in\sC_\infty\,\right]\\
 &\leq \Prb\Big[\,\varphi_n\geq c'_3n^{-1}\,\big|\,0\in\sC_\infty\,\Big]+\Prb\left[\begin{array}{c}\exists G_n\in\cG_n,\, \exists i\in\{1,\dots,m(G_n)\},\\ \,|\partial^oL_i|\leq c'_3n^{(1-\ep)d/(d-1)}\end{array}\Big|\,0\in\sC_\infty\,\right]\\
 &\leq C'_1\exp(-C'_2n)+\frac{(2n^d)^d}{\theta_p}c'_1\exp(-c'_2n^{1-3/2d})\,.
 \end{align*}
 This yields the result.
\end{proof}
Using the control on the number of large components $m(G_n)$ of $\sC_\infty$ enclosed in $\Gamma_n$ and a Peierls argument, we obtain the following control of $|\widetilde{\Gamma}_n|$:
\begin{prop}\label{controlper}
Let $d\geq 2$ and $p>p_c(d)$. There exist positive constants $\beta_0$, $C_1$, $C_2$ depending only on $d$ and $p$ such that, for all $n\geq 1$, for all $\beta\geq \beta_0$,
$$\Prb\left[\max_{G_n\in\cG_n}|\widetilde{\Gamma}_n|\geq \beta n^{d-1}\,\big|\,0\in\sC_\infty\,\right]\leq C_1\exp(-C_2 n^{1-3/2d})\,.$$
\end{prop}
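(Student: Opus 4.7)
The plan is to reduce the edge bound $|\widetilde{\Gamma}_n| \leq \beta n^{d-1}$ to a bound on the number of cubes $|\underline{\widetilde{\Gamma}}_n|$ via the trivial inequality $|\widetilde{\Gamma}_n| \leq c_d k^d |\underline{\widetilde{\Gamma}}_n|$ (each $3k$-cube contains $O(k^d)$ edges), and then to control $|\underline{\widetilde{\Gamma}}_n|$ through a Peierls argument exploiting the abnormality of its cubes. First, I would restrict to the intersection of the two high-probability events
\[
\mathcal{A}_1 = \bigl\{\, \forall G_n \in \cG_n,\ |\partial^o G_n| \leq C_3' n^{d-1} \,\bigr\}, \qquad \mathcal{A}_2 = \bigl\{\, \forall G_n \in \cG_n,\ m(G_n) \leq c_3 n^{d-2+3/(2d)} \,\bigr\},
\]
whose complements are controlled respectively by Theorem \ref{ULD} (combined with $|G_n|\leq n^d$) and Proposition \ref{propcompcon}, the latter giving the slower rate $\exp(-C_2 n^{1-3/(2d)})$ which will ultimately dominate the estimate. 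Since $G_n$ is a connected subgraph of $\sC_\infty$ containing $0$ with $|G_n|\leq n^d$, both $G_n$ and $\widetilde{\Gamma}_n$ lie within $L^\infty$-distance $n^d$ of the origin, so every $k$-cube index appearing in $\underline{\widetilde{\Gamma}}_n$ is confined to a set of polynomially many positions.

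The crucial structural input from Zhang's construction is that, in the auxiliary configuration $\omega'$, every cube of $\underline{\widetilde{\Gamma}}_n$ is abnormal. Switching from $\omega'$ back to the original configuration $\omega$ flips the state of the edges of $\partial^o G_n$; since each edge is contained in at most $3^d$ distinct $3k$-cubes, at most $3^d C_3' n^{d-1}$ cubes of $\underline{\widetilde{\Gamma}}_n$ can lose their abnormality on $\mathcal{A}_1$. I would then bound $\Prb[|\underline{\widetilde{\Gamma}}_n| \geq N]$ by enumerating $\underline{\widetilde{\Gamma}}_n$ as the disjoint union of $M \leq c_3 n^{d-2+3/(2d)}+1$ lattice $*$-animals of total size $N$, accounting for: (a) the way $N$ splits into $M$ positive parts ($\leq 2^{N+M}$); (b) the positions of the $M$ roots in the polynomial window ($\exp(O(M\log n))$); (c) the $*$-animal attached to each root, at most $(7^d)^N$ in total by Lemma \ref{tailleanimal}. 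To decorrelate the abnormality events, each measurable with respect to a $3k$-cube, I would partition $\sZ^d$ into $3^d$ residue classes modulo $3$ so that $3k$-cubes attached to indices in the same class are pairwise disjoint; by pigeonhole, some class contains at least $(N - 3^d C_3' n^{d-1})/3^d$ abnormal cubes, which are then i.i.d.\ Bernoulli with parameter at most $C_1 e^{-C_2 k}$ (Proposition \ref{contprbmauvais}).

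Combining the three ingredients yields a bound of the form
\[
\Prb\bigl[\mathcal{A}_1 \cap \mathcal{A}_2 \cap \{|\underline{\widetilde{\Gamma}}_n|=N\}\bigr] \leq 3^d \exp\bigl(O(M\log n)+O(N)\bigr) \cdot (C_1 e^{-C_2 k})^{(N-3^d C_3' n^{d-1})/3^d}.
\]
Choosing $k$ large enough that $C_2 k/3^d$ dominates the constant hidden in $O(N)$, the right-hand side is exponentially small in $N$ for $N \geq \beta' n^{d-1}$ provided $\beta'$ is a sufficiently large multiple of $C_3'$; the correction $M\log n = O(n^{d-2+3/(2d)}\log n) = o(n^{d-1})$ is harmless for $d \geq 2$. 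Summing the geometric series over $N$ and passing from $|\underline{\widetilde{\Gamma}}_n|$ to $|\widetilde{\Gamma}_n|$ via the factor $c_d k^d$ gives the conclusion with $\beta_0 = c_d k^d \beta'$. The main obstacle is that $\underline{\widetilde{\Gamma}}_n$ is not a single $*$-connected animal but a union of up to $m(G_n)+1$ pieces which may lie far apart, forcing the Peierls enumeration to include $M$ free root positions in a polynomial-size window; this is precisely where the polynomial control $m(G_n) = O(n^{d-2+3/(2d)})$ from Proposition \ref{propcompcon} is indispensable, as it keeps $M\log n$ strictly below the surface order $n^{d-1}$.
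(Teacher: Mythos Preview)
Your proposal is correct and follows essentially the same Peierls argument as the paper: reduce to the cube count via $|\widetilde{\Gamma}_n|\leq (6k)^d|\underline{\widetilde{\Gamma}}_n|$, work on the high-probability events from Theorem~\ref{ULD} and Proposition~\ref{propcompcon}, enumerate $\underline{\widetilde{\Gamma}}_n$ as a union of at most $O(n^{d-2+3/(2d)})$ rooted $*$-animals in a polynomial window, thin to get independence of the abnormality events, subtract the at most $O(|\partial^o G_n|)$ cubes whose abnormality is destroyed by reopening $\partial^o G_n$, and choose $k$ large to beat the entropy. The only cosmetic differences are that the paper extracts a well-separated subset of density $4^{-d}$ and then subtracts $|\partial^o G_n|$, whereas you first subtract the affected cubes and then pigeonhole into $3^d$ residue classes; both orderings work and yield the same conclusion.
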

\begin{proof}Let $k$ be a large integer that we will choose later. We consider a renormalization process of parameter $k$. Let $G_n\in\cG_n$. First notice that as $\widetilde{\Gamma}_n\subset\bigcup_{B\in\underline{\widetilde{\Gamma}}_n}B'$, we have
$$|\widetilde{\Gamma}_n|\leq (6k)^d|\underline{\widetilde{\Gamma}}_n|\,.$$
Thus, it is enough to control the quantity $|\underline{\widetilde{\Gamma}}_n|$ to prove Proposition \ref{controlper}.
 We can rewrite $\underline{\widetilde{\Gamma}}_n$ as
$$\underline{\widetilde{\Gamma}}_n=\bigcup_{i=1}^{m'}\underline{A}_i\text{    with $m'\leq m(G_n)$}$$
where the $\underline{A}_i$ are pairwise disjoint $*$-connected sets of cubes. 
Thanks to Theorem \ref{ULD}, there exist positive constants $C'_1$, $C'_2$ and $C'_3$ depending only on $p$ and $d$ such that for all $n\geq 1$,
\begin{align}\label{jsap2}
\Prb\Big[\,\varphi_n\geq C'_3n^{-1}\,\big|\,0\in\sC_\infty\,\Big]\leq C'_1\exp(-C'_2n)\,.
\end{align}
 Let $G_n\in\cG_n$. We have with probability at least $1- C'_1\exp(-C'_2n)$ that
 $$|\partial^o G_n|\leq C'_3n^{d-1}\,.$$
 We choose $\beta$ large enough such that 
 $$C'_3\leq \frac{\beta}{2\cdot 4^d}\,,$$
 so that 
  $$|\partial^o G_n|\leq C'_3n^{d-1}\leq \frac{\beta}{2\cdot 4^d}n^{d-1}\,.$$
 We now want to sum over the possible realizations of $\underline{\widetilde{\Gamma}}_n$. 
Using Proposition \ref{propcompcon} together with inequality \eqref{jsap2}, we get
\begin{align}\label{jsap3}
\Prb&\left[\exists G_n\in\cG_n,\,|\underline{\widetilde{\Gamma}}_n|\geq \beta n^{d-1}\,\Big|\,0\in\sC_\infty\right]\nonumber\\
&\leq \Prb\left[\exists G_n\in \cG_n,\begin{array}{c} \sum_{i=1}^{m'}|\underline{A}_i|\geq \beta n^{d-1},\,m'\leq c'_3n^{d-2+3/2d},\\ |\partial^o G_n|\leq \frac{\beta}{2\cdot 4^d}n^{d-1}\end{array}\,\Big|\,0\in\sC_\infty\right]\nonumber\\
&\hspace{0.5cm}+c_1\exp(-c_2n^{1-3/2d})+C'_1\exp(-C'_2n)\nonumber\\
&\leq \sum _{j\geq\beta n^{d-1}}\sum_{m'=1}^{c'_3n^{d-2+3/2d}}\sum_{\substack{j_1+\dots+j_{m'}=j\\ j_1>0,\,\dots,j_{m'}> 0}}\sum_{x_1,\dots,x_{m'}\in[-n^d,n^d]^d\,\,}\sum_{\substack{A_1\in \Animals_{x_1}\\|A_1|=j_1}}\cdots\nonumber\\
&\hspace{0.2cm}\cdots\sum_{\substack{A_{m'}\in \Animals_{x_{m'}}\\|A_{m'}|=j_{m'}}}\Prb\Big[\exists G_n\in\cG_n,\,\underline{\widetilde{\Gamma}}_n=\bigcup_{i=1}^{m'}A_i,\,|\partial^o G_n|\leq \frac{\beta}{2\cdot 4^d}n^{d-1}\,\Big|\,0\in\sC_\infty\Big]\nonumber\\
& \hspace{0.5cm}+c_1\exp(-c_2n^{1-3/2d})+C'_1\exp(-C'_2n)\,.
\end{align}
Let us assume $\underline{\widetilde{\Gamma}}_n=\bigcup_{i=1}^{m'}A_i$. We can extract from $\underline{\widetilde{\Gamma}}_n$ a set of $k$-cubes $\underline{\widetilde{\Gamma'}}_n$ such that 
$|\underline{\widetilde{\Gamma'}}_n|\geq |\underline{\widetilde{\Gamma}}_n|/4^d$  and for any $\textbf{i}\neq\textbf{j}$ such that $B_k(\textbf{i}),B_k(\textbf{j}) \in \underline{\widetilde{\Gamma'}}_n$ we have $B'_k(\textbf{i})\cap B'_k(\textbf{j})=\emptyset$.
As the rare event depends only on the configuration in the $3k$-cube $B'_k(\textbf{j})$, the two following events $\big\{\text{a rare event occurs in $B_k(\textbf{i})$}\big\}$ and $\big\{\text{a rare event occurs in $B_k(\textbf{j})$}\big\}$ are independent. Using Proposition \ref{contprbmauvais}, we obtain
\begin{align*}
\Prb&\left[\exists G_n\in\cG_n,\,\underline{\widetilde{\Gamma}}_n=\bigcup_{i=1}^{m'}A_i,\,|\partial^o G_n|\leq \frac{\beta}{2\cdot 4^d}n^{d-1}\,\Big|\,0\in\sC_\infty\right]\\
&\hspace{0.3cm}\leq\Prb\left[\exists G_n\in\cG_n,\,\underline{\widetilde{\Gamma}}_n=\bigcup_{i=1}^{m'}A_i,\, |\underline{\widetilde{\Gamma'}}_n|\geq j/4^d,\,|\partial^o G_n|\leq \frac{\beta}{2\cdot 4^d}n^{d-1}\,\Big|\,0\in\sC_\infty\right]\\
&\hspace{0.3cm}\leq \Prb\left[\exists G_n\in\cG_n,\begin{array}{c}\underline{\widetilde{\Gamma}}_n=\bigcup_{i=1}^{m'}A_i,\,|\partial^o G_n|\leq \frac{\beta}{2\cdot 4^d}n^{d-1}, \\|\{B\subset\underline{\widetilde{\Gamma'}}_n, \text{ $B$ abnormal}\}|\geq j/4^d-|\partial ^o G_n|\end{array}\Big|\,0\in\sC_\infty\right]\\
&\hspace{0.3cm}\leq \Prb\left[\exists G_n\in\cG_n,\,\underline{\widetilde{\Gamma}}_n=\bigcup_{i=1}^{m'}A_i,\, |\{B\subset\underline{\widetilde{\Gamma'}}_n, \text{ $B$ abnormal}\}|\geq j/(2.4^d)\right]\cdot \frac{1}{\theta_p}\\
&\hspace{0.3cm}\leq\frac{4^d}{\theta_p} \sum_{l\geq j/(2.4^d)}\left(C_1\e^{-C_2k}\right)^{ l}\,\\
&\hspace{0.3cm}\leq\frac{2\cdot 4^d}{\theta_p} \left(C_1\e^{-C_2k}\right)^{ j/(2.4^d)}\,
\end{align*}
where $k$ will be chosen large enough such that $C_1\e^{-C_2k}\leq 1/2$.
So together with inequality \eqref{jsap3} and using Lemma \ref{tailleanimal}, we obtain
\begin{align*}
\Prb&\left[\exists G_n\in\cG_n,\,|\underline{\widetilde{\Gamma}}_n|\geq \beta n^{d-1}\,\big|\,0\in\sC_\infty\right]\\
&\leq \sum _{j\geq\beta n^{d-1}}\sum_{m'=1}^{c'_3n^{d-2+3/2d}}\sum_{\substack{j_1+\dots+j_{m'}=j\\ j_1>0,\,\dots,j_{m'}> 0}}\sum_{x_1,\dots,x_{m'}\in[-n^d,n^d]^d\,\,}\sum_{\substack{A_1\in \Animals_{x_1}\\|A_1|=j_1}}\cdots\\
&\cdots\sum_{\substack{A_{m'}\in \Animals_{x_{m'}}\\|A_{m'}|=j_{m'}}}\frac{2\cdot 4^d}{\theta_p} \left(C_1\e^{-C_2k}\right)^{ j/(2.4^d)}+c_1\e^{-c_2n^{1-3/2d}}+C'_1\e^{-C'_2n}\\
&\leq \frac{2\cdot 4^d}{\theta_p}  \sum _{j\geq\beta n^{d-1}}\left(C_1\e^{-C_2k}\right)^{ \frac{j}{2.4^d}}\sum_{m'=1}^{c'_3n^{d-2+3/2d}}\sum_{\substack{j_1+\dots+j_{m'}=j\\ j_1>0,\,\dots,j_{m'}> 0}}(2n)^{d^2m'}7^{dj_1}\cdots 7^{dj_{m'}}\\
&\hspace{0.5cm}+c_1\e^{-c_2n^{1-3/2d}}+C'_1\e^{-C'_2n}\\
&\leq  \frac{2\cdot 4^d}{\theta_p}  \sum _{j\geq\beta n^{d-1}}7^{dj}\left(C_1\e^{-C_2k}\right)^{ j/(2.4^d)} \sum_{m'=1}^{c'_3n^{d-2+3/2d}}(2n^d)^{dm'}\cdot\\
&\hspace{0.5cm}\times\left|\left\{\,(j_1,\dots,j_{m'}): \begin{array}{c} j_1+\dots+j_{m'}=j,\\\, j_1>0,\,\dots,j_{m'}> 0\end{array}\,\right\}\right|+c_1\e^{-c_2n^{1-3/2d}}+C'_1\e^{-C'_2n}\\
&\leq \frac{2\cdot 4^d}{\theta_p}    (2n^d)^{d(c'_3n^{d-2+3/2d}+2)} \sum _{j\geq\beta n^{d-1}} (2\cdot7^d)^j\left(C_1\e^{-C_2k}\right)^{ j/(2.4^d)} \\
&\hspace{0.5cm}+c_1\e^{-c_2n^{1-3/2d}}+C'_1\e^{-C'_2n}
\end{align*}
We now choose $k$ large enough such that 
$$C_1\e^{-C_2k}\leq \frac{1}{2}\qquad \text{and}\qquad \left((2\cdot 7^d)^{2\cdot 4^d}C_1\e^{-C_2k}\right)^{ 1/(2.4^d)}\leq \e^{-1}$$
Finally, we get
\begin{align*}
\Prb&\left[\exists G_n\in\cG_n,\,|\underline{\widetilde{\Gamma}}_n|\geq \beta n^{d-1}\,\big|\,0\in\sC_\infty\right]\\
&\leq  \frac{2(2n^d)^{d(c'_3n^{d-2+3/2d}+2)}4^{d}}{\theta_p}\sum_{j\geq \beta n ^ {d-1}} \left((2\cdot7^d)^{2\cdot4^d}C_1\e^{-C_2k}\right)^{j/(2.4^d)}\\
&\hspace{1cm}+c_1\e^{-c_2n^{1-3/2d}}+C'_1\e^{-C'_2n}\\
&\leq \frac{4 ^{d+1}\exp\left(2d^2c'_3n^{d-2+3/2d}\log n- \beta n^{d-1} \right)}{\theta_p} +c_1\e^{-c_2n^{1-3/2d}}+C'_1\e^{-C'_2n}\,.
\end{align*}
This yields the result for $\beta\geq \beta_0$ where $\beta_0$ is such that for all $n\geq0$, we have $\beta_0>(4d^2c'_3\log n) /n^{1-3/2d}$.
\end{proof}
We can now build the relevant continuous object $P_n$. Given a finite set of edges $S$, we define 
$$\hull(S)=\Big\{\,x\in \sZ^d:\, \text{any path from $x$ to infinity has to use an edge of $S$}\,\Big\}\,$$
and
$$H_n=\hull(\Gamma_n)\setminus \left(\bigcup_{i=1}^m \hull(\widehat{\Gamma}_n^{(i)})\right)\,.$$
\noindent We define $P_n$ and its associated measure $\nu_n$ as
$$P_n=\frac{1}{n} \left(H_n+\left[-\dfrac{1}{2},\dfrac{1}{2}\right]^d\right)\,,$$$$\forall E\in \cB\left(\sR^d\right),\,\nu_n(E)=\theta_p\cL^d(P_n\cap E)\,.$$
We obtain a control on the size of the perimeter of $P_n$ by  a straightforward application of Proposition \ref{controlper}:
\begin{cor}\label{cor}
Let $d\geq 2$ and $p>p_c(d)$. There exist positive constants $\beta_0$, $C_1$, $C_2$ depending only on $d$ and $p$ such that for all $n\geq 1$, for all $\beta>\beta_0$,
$$\Prb\left[\max_{G_n\in\cG_n}\cP(nP_n)\geq \beta n^{d-1}\,\big|\,0\in\sC_\infty\,\right]\leq C_1\e^{-c_2n^{1-3/2d}}\,.$$
\end{cor}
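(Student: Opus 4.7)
The plan is to reduce $\cP(nP_n)$ to the combinatorial quantity $|\widetilde{\Gamma}_n|$ already controlled by Proposition \ref{controlper}, and then invoke that proposition directly.

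First, note that $H_n\subset \sZ^d$ is a finite subset and $nP_n=H_n+[-1/2,1/2]^d$ is a finite union of closed unit cubes centered at integer points. The topological boundary $\partial(nP_n)$ is therefore a union of unit hyperfaces, each orthogonal to a coordinate direction, and each such face lies between two adjacent unit cubes, one centered at some $x\in H_n$ and one centered at some $y\in\sZ^d\setminus H_n$ with $\langle x,y\rangle\in \E^d$. Hence $nP_n$ has finite perimeter and
$$\cP(nP_n)=\cH^{d-1}(\partial(nP_n))=|\partial H_n|\,,$$
where $\partial H_n$ denotes the edge boundary of $H_n$ in $\sZ^d$.

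Second, I claim that $\partial H_n\subset\widetilde{\Gamma}_n$. Let $e=\langle x,y\rangle\in\partial H_n$ with $x\in H_n$ and $y\notin H_n$. By definition, $H_n=\hull(\Gamma_n)\setminus\bigcup_{i=1}^m\hull(\widehat{\Gamma}_n^{(i)})$, so $x\in\hull(\Gamma_n)$ and $x\notin\hull(\widehat{\Gamma}_n^{(i)})$ for every $i$. There are then two possibilities for $y\notin H_n$: either $y\notin\hull(\Gamma_n)$, in which case the edge $e$ joins a vertex inside the hull of $\Gamma_n$ to one outside, and by definition of the hull any path from $y$ to infinity avoids $\Gamma_n$; since $x$ lies inside the hull, the only way to realize such a path starting from $x$ via the edge $e$ is to have $e\in\Gamma_n$. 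Or $y\in\hull(\widehat{\Gamma}_n^{(j)})$ for some $j$; the same argument applied to $\widehat{\Gamma}_n^{(j)}$ forces $e\in\widehat{\Gamma}_n^{(j)}$. In either case $e\in\Gamma_n\cup\bigcup_{i=1}^m\widehat{\Gamma}_n^{(i)}=\widetilde{\Gamma}_n$, proving the inclusion.

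Combining these two observations yields
$$\cP(nP_n)=|\partial H_n|\leq |\widetilde{\Gamma}_n|$$
for every $G_n\in\cG_n$, so that
$$\Prb\!\left[\max_{G_n\in\cG_n}\cP(nP_n)\geq \beta n^{d-1}\,\Big|\,0\in\sC_\infty\right]\leq \Prb\!\left[\max_{G_n\in\cG_n}|\widetilde{\Gamma}_n|\geq \beta n^{d-1}\,\Big|\,0\in\sC_\infty\right]\,.$$
The result then follows immediately from Proposition \ref{controlper} with the same threshold $\beta_0$ and constants $C_1,C_2$ (up to an irrelevant renaming). The only genuinely non-routine point is the inclusion $\partial H_n\subset\widetilde{\Gamma}_n$, which relies on the definition of $H_n$ via hulls; the rest is bookkeeping relating the continuous perimeter of a union of unit cubes to the discrete edge boundary of its set of centers.
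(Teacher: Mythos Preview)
Your proof is correct and is precisely the argument the paper has in mind: the paper states the corollary as ``a straightforward application of Proposition \ref{controlper}'', and you have spelled out the two routine steps (identifying $\cP(nP_n)$ with $|\partial H_n|$ for a union of unit cubes, and showing $\partial H_n\subset\widetilde{\Gamma}_n$ via the hull definition). One minor phrasing slip: in the first case you write ``any path from $y$ to infinity avoids $\Gamma_n$'', but you mean ``there exists a path from $y$ to infinity avoiding $\Gamma_n$''; the subsequent logic is unaffected.
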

\noindent The following Lemma will be useful to compare the measure $\nu_n$ with the measure associated to $F_n$.
\begin{lem}
Let $G_n\in\cG_n$ and $F_n$ as defined in \eqref{jsap5}. We have  $F_n=H_n\cap \sC_\infty$. 
\end{lem}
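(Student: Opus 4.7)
The plan is to unwind the set-theoretic identity and reduce it to a statement about the hulls of the cutsets $\widehat{\Gamma}_n^{(i)}$.

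First, I would identify $\hull(\Gamma_n)\cap\sC_\infty$ explicitly. By the construction of $\Gamma_n$ as a cutset separating $G_n$ from infinity one has $G_n\subset\hull(\Gamma_n)$; moreover a connected component of $\sC_\infty\setminus G_n$ lies in $\hull(\Gamma_n)$ iff it is surrounded by $\Gamma_n$, which, by definition, singles out precisely the enumerated components $L_1,\dots,L_m,S_1,\dots,S_N$. Writing $\sC_\infty$ as the disjoint union of $G_n$ and $\sC_\infty\setminus G_n$, I therefore obtain
$$\hull(\Gamma_n)\cap\sC_\infty \;=\; G_n\,\cup\,\bigcup_{i=1}^m L_i\,\cup\,\bigcup_{j=1}^N S_j \;=\; F_n\,\cup\,\bigcup_{i=1}^m L_i\,.$$
Since $F_n$ and the $L_i$ are pairwise disjoint, the identity $F_n=H_n\cap\sC_\infty$ reduces, via the defining formula for $H_n$, to the key claim
$$\sC_\infty\cap\bigcup_{i=1}^m\hull\bigl(\widehat{\Gamma}_n^{(i)}\bigr) \;=\; \bigcup_{i=1}^m L_i\,.$$
The inclusion $\supset$ is immediate because each $\widehat{\Gamma}_n^{(i)}$ is, by construction, a cutset separating $L_i$ from infinity, so $L_i\subset\hull(\widehat{\Gamma}_n^{(i)})$.

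The main obstacle is the reverse inclusion: one must rule out that a vertex of $\sC_\infty\setminus L_i$ lies in $\hull(\widehat{\Gamma}_n^{(i)})$. To handle it I would invoke the specifics of Zhang's construction, which places $\widehat{\Gamma}_n^{(i)}$ in the $3k$-cubes of $\underline{\widehat{\Gamma}}_n^{(i)}$ adjacent to $L_i$, so that the bounded component of $\sZ^d\setminus\widehat{\Gamma}_n^{(i)}$ is the cavity of $\sZ^d\setminus G_n$ in which $L_i$ sits. Any $y\in\sC_\infty$ enclosed by $\widehat{\Gamma}_n^{(i)}$ therefore lies in this cavity; being in $\sC_\infty\setminus G_n$ and surrounded by $\Gamma_n$, it belongs to some enumerated component of $\sC_\infty\setminus G_n$, and because the distinct components $L_{i'},S_j$ occupy pairwise disjoint cavities of $\sZ^d\setminus G_n$, it must lie in $L_i$ itself. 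This yields the second inclusion and completes the proof.
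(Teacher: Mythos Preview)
Your set-theoretic reduction is clean and correctly isolates the heart of the matter, namely
\[
\sC_\infty\cap\bigcup_{i=1}^m\hull\bigl(\widehat{\Gamma}_n^{(i)}\bigr)\;=\;\bigcup_{i=1}^m L_i\,.
\]
However, your treatment of the hard inclusion has a genuine gap. The assertion that the distinct components $L_{i'}$, $S_j$ occupy \emph{pairwise disjoint cavities} of $\sZ^d\setminus G_n$ is not correct in general: the $L_i$ and $S_j$ are connected components of $\sC_\infty\setminus G_n$ for the \emph{open} subgraph, and two such components can perfectly well lie in the same connected component of $\sZ^d\setminus G_n$, being joined by a path of closed edges that avoids $G_n$. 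For the same reason, the claim that $\hull(\widehat{\Gamma}_n^{(i)})$ coincides with one such cavity does not follow from what the paper records about Zhang's construction; knowing that $\widehat{\Gamma}_n^{(i)}$ lives in $3k$-cubes adjacent to $L_i$ does not by itself prevent the hull from swallowing vertices of $G_n$ or of some $S_j$ that happens to sit nearby.

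The paper argues the hard inclusion by a different, more concrete mechanism. The crucial observation is that $\widehat{\Gamma}_n^{(i)}$ is \emph{closed} in the auxiliary configuration $\omega'$, so after reverting to $\omega$ the only edges of $\widehat{\Gamma}_n^{(i)}$ that can be open lie in $\partial^o G_n$, and in fact in $\partial^o L_i$. One then runs a path-crossing argument: if $x\in G_n$ were in $\hull(\widehat{\Gamma}_n^{(i)})$, an open path inside $G_n$ escaping the hull would have to traverse an open edge of $\widehat{\Gamma}_n^{(i)}$, hence an edge of $\partial^o L_i$, forcing the path to visit $L_i$ --- impossible for a path confined to $G_n$. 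Similarly, if $x\in S_j$ were in the hull, the open path from $x$ to infinity would be forced through $L_i$, exhibiting an open connection in $\sC_\infty\setminus G_n$ between $x$ and $L_i$ and contradicting $x\notin L_i$. This openness-of-edges argument is exactly what is needed to distinguish $L_i$ from the other components and is what your cavity picture lacks.
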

\begin{proof}
Let $G_n\in\cG_n$. Let $x\in H_n\cap \sC_\infty$, then $x$ belongs to $\sC_\infty\cap \hull{\Gamma_n}$ but is not in any of the large connected components $L_1,\dots, L_m$. Therefore, $x$ belongs to $G_n$ or to one of the small components $S_1,\dots,S_N$ and so $x\in F_n$.

Conversely, let $x\in F_n$. It is clear that $x\in\hull(\Gamma_n)$. Let us assume $x\in G_n$ and that there exists $i$ such that \smash{$x\in\hull( \widehat{\Gamma}_n^{(i)})$}. As $G_n$ is connected there exists an open path $\gamma$ in $G_n$ that joins $x$ with \smash{$G_n\setminus \widehat{\Gamma}_n^{(i)}$}. As the edges of \smash{$\widehat{\Gamma}_n^{(i)}\setminus \partial ^oL_i$} are closed, $\gamma$ must use an edge of $\partial ^o L_i$ and so go through a vertex of $L_i$. That is a contradiction as the path $\gamma$ uses only vertices in $G_n$. Let us now assume that $x\in S_j$ and $x\in\hull( \widehat{\Gamma}_n^{(i)})$ for some $i$ and $j$. As $x\in\sC_\infty$, $x$ is connected to infinity by an open path $\gamma'$. However, by the same arguments, to exit $\hull(\widehat{\Gamma}_n^{(i)})$, the path $\gamma'$ has to go through a vertex of $L_i$. Thus, there exist an open path in $\sC_\infty\setminus G_n$ that joins $x$ to $L_i$. That is a contradiction as $x\notin L_i$. 

 Finally, $F_n\subset H_n\cap \sC_\infty$. 
\end{proof}

\subsection{Closeness of measures}
We shall show that for any ball of constant radius centered at a point $x\in\sZ^d$, the measures $\nu_n$ and $\mu_n$ restricted to this ball are close to each other in some weak sense.
\begin{prop}\label{propcontiguity}Let $p>p_c(d)$ and $r>0$.
 Let $u:]0,+\infty[\rightarrow ]0,+\infty[$ be a non-decreasing function such that $\lim_{t\rightarrow 0}u(t)=0$. For all $\delta>0$, there exist $C_1$ and $C_2$ depending on $d$, $p$, $u$ and $\delta$ such that for all $n\geq 1$, for any finite set $\fF_n$ of uniformly continuous functions that satisfies: 
$$\forall f\in\fF_n\quad \|f\|_\infty\leq 1\qquad\text{ and }\qquad\forall x,y\in\sR^d\quad |f(x)-f(y)|\leq u(\|x-y\|_2)\,,$$
we have
$$\Prb\left[\max _ {G_n\in\cG_n}\,\sup_{f\in\fF}|\mu_n(f\ind_{B(x,r)})-\nu_n(f\ind_{B(x,r)})|>\delta\,\Big |\,0\in\sC_\infty\right]\leq C_1\e^{-C_2n^{1-3/2d} }\,.$$
\end{prop}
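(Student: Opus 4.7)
The plan is to introduce the auxiliary empirical measure $\mu^{F_n}_n = n^{-d}\sum_{y \in F_n}\delta_{y/n}$ associated with the filled set $F_n$ and use the triangle inequality
$$|\mu_n(f\ind_{B(x,r)}) - \nu_n(f\ind_{B(x,r)})| \le |(\mu_n - \mu^{F_n}_n)(f\ind_{B(x,r)})| + |(\mu^{F_n}_n - \nu_n)(f\ind_{B(x,r)})|\,.$$
The first term is easily controlled: each small component $S_i \subset F_n \setminus G_n$ has size at most $n^{1-1/(2(d-1))}$ and contributes at least one edge to $\partial^o G_n$ along its interface with $G_n$, so the number of small components is at most $|\partial^o G_n|$. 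Theorem \ref{ULD} gives $|\partial^o G_n| \le Cn^{d-1}$ with probability at least $1 - C_1\e^{-C_2 n}$, whence $|F_n \triangle G_n| \le C n^{d - 1/(2(d-1))} = o(n^d)$ and so $|(\mu_n - \mu^{F_n}_n)(f\ind_{B(x,r)})| \le \|f\|_\infty \cdot Cn^{-1/(2(d-1))}$, which is below $\delta/2$ uniformly in $f \in \fF_n$ for $n$ large.

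For the second term, since $F_n = H_n \cap \sC_\infty$ and $P_n = n^{-1}(H_n + [-1/2, 1/2]^d)$, uniform continuity of $f$ reduces the comparison, modulo errors of order $u(\sqrt d/n) + n^{-1}r^{d-1}$ coming from the unit-cube smearing and the boundary of $nB(x,r)$, to
$$\Big|\frac{1}{n^d}\sum_{y \in H_n \cap \sC_\infty \cap nB(x,r)} f(y/n) - \frac{\theta_p}{n^d}\sum_{y \in H_n \cap nB(x,r)} f(y/n)\Big|\,.$$
I tune a renormalization parameter $\ep > 0$ and partition $\sR^d$ into cubes $B_\alpha$ of side $\ep n$. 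Declare $B_\alpha$ \emph{good} if its enlargement $B_\alpha'$ contains a unique open cluster of diameter $\ge \ep n/10$ and the density $|\sC_\infty \cap B_\alpha|/|B_\alpha|$ lies in $(\theta_p - \delta', \theta_p + \delta')$; by Pisztora's estimates, already invoked in the proof of Theorem \ref{ULD}, a union bound over the $O((r/\ep)^d)$ cubes meeting $nB(x,r)$ makes all of them simultaneously good except on an exponentially small event.

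Classify good cubes as \emph{interior} to $H_n$, where $F_n \cap B_\alpha = \sC_\infty \cap B_\alpha$ has cardinality $\theta_p|B_\alpha|(1 + O(\delta'))$ which matches $\theta_p|H_n \cap B_\alpha|$ exactly; \emph{exterior}, where both terms vanish; or \emph{boundary}, where each cube contributes at most $2\ep^d$ to either side. Corollary \ref{cor} gives $\cP(nP_n) \le \beta n^{d-1}$ with probability $1 - C_1 \e^{-C_2 n^{1-3/(2d)}}$, so a standard covering argument bounds the number of boundary cubes by $C_d\beta/\ep^{d-1}$, yielding a total boundary contribution of at most $C_d \beta \ep$. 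Approximating $f$ by its value at cube centers costs a further $u(\sqrt d \,\ep)\,\cL^d(B(x,r))$ uniformly in $f \in \fF_n$. Summing all contributions gives an upper bound of the form $C(\delta' + \beta\ep + u(\sqrt d\,\ep)\,r^d) + o(1)$. I choose $\ep$ small, then $\delta'$ small, and finally $n$ large, to force this below $\delta/2$. The dominant failure event comes from Corollary \ref{cor}, which dictates the stretched-exponential rate $\e^{-C_2 n^{1-3/(2d)}}$ in the statement. The main obstacle is precisely this boundary control: without a perimeter bound for $P_n$ I cannot estimate the number of boundary cubes, and the rate in Corollary \ref{cor}, originating from Zhang's construction and Proposition \ref{propcompcon}, is what prevents a linear deviation order in the final bound.
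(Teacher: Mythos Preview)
Your overall strategy matches the paper's: introduce $\mu_n^{F_n}$, dispose of $\mu_n-\mu_n^{F_n}$ via the small-component count, and compare $\mu_n^{F_n}$ with $\nu_n$ on a renormalized grid using the relation $F_n=H_n\cap\sC_\infty$. The first half and the identification of Corollary~\ref{cor} as the rate-limiting input are both correct.

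The genuine gap is the boundary-cube estimate. You assert that a ``standard covering argument'' gives at most $C_d\beta/\ep^{d-1}$ boundary cubes from the bound $\cP(nP_n)\le\beta n^{d-1}$. This would require each $\ep n$-cube meeting $\partial(nP_n)$ to carry at least order $(\ep n)^{d-1}$ of perimeter, which is true for a flat or Lipschitz boundary but fails for an arbitrary union of unit cubes: a filament of $M\sim\beta n^{d-1}$ unit cubes has perimeter $\sim\beta n^{d-1}$ yet meets $\sim M/(\ep n)=\beta n^{d-2}/\ep$ macroscopic cubes, far exceeding $\beta/\ep^{d-1}$ when $d\ge 3$. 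The honest bound---each of the $\le\beta n^{d-1}$ unit boundary faces of $nP_n$ sits in $O(1)$ macroscopic cubes---yields a total boundary contribution of order $\beta n^{d-1}\ep^d$, which diverges. Your macroscopic scale $\ep n$ is too coarse for this step.

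The paper fixes this by renormalizing at a \emph{mesoscopic} scale $L=K\ln n$, for which the boundary contribution becomes $\beta(3L)^d/n\to 0$. The price is that a single $L$-cube is bad with probability only $n^{-cK}$, so a union bound over the $(n/L)^d$ cubes would give merely polynomial decay. The paper therefore drops the requirement that all cubes be good and instead controls the \emph{density} of bad cubes: Liggett--Schonmann--Stacey stochastic domination reduces the site process $(X(\underline{y}))$ to an i.i.d.\ Bernoulli field of parameter close to $1$, and Proposition~\ref{Pizt} applied on the renormalized lattice then shows that the fraction of cubes not in the infinite renormalized cluster exceeds $\ep$ with probability at most $\exp(-c(n/L)^{d-1})$. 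This density argument, absent from your sketch, is the substantive missing idea.
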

\begin{rk}We state here the result in a general form. In the following, we will apply this Proposition for the particular case of sets of functions that are translates of the same function. The function $u$ is an upper bound on the modulus of continuity of the functions in $\fF_n$. If we think of $\fF_n$ as a set that grows with $n$, this condition may be interpreted as a sufficient condition to obtain compactness for the set $\fF_n$ in the limit.

\end{rk}
\noindent To prove this result, we will need the following proposition that is a corollary of the results in \cite{Pisztora}:
\begin{prop}\label{Pizt}Let $d\geq 2$ and $p>p_c(d)$. Let $r>0$, and let $Q\subset \sR^d$ be a cube of side length $2r$. Let $\delta>0$. There exist positive constants $c_1$ and $c_2$ depending on $d$, $p$ and $\delta$ such that 
$$\Prb\left[\frac{|\sC_\infty \cap Q|}{\cL^d (Q)}\notin (\theta_p-\delta,\theta_p+\delta)\right]\leq c_1\exp(-c_2r^{d-1})\,.$$
\end{prop}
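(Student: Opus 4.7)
The plan is to derive this proposition as an application of the surface-order large deviation results of Pisztora, via a block-renormalization argument carried out on the cube $Q$ at a mesoscopic scale $k = k(d,p,\delta)$ that is large but fixed. First I would tile $Q$ by disjoint $k$-cubes $B_1,\ldots,B_N$ (with $N \asymp (r/k)^d$), and associate to each $B_i$ the enlarged $3k$-cube $B'_i$ sharing its center. Declare the block $B_i$ to be \emph{good} if three events occur: (a) there is a unique open cluster $C_i$ in $B'_i$ of diameter at least $k$; (b) for every neighbouring block $B_j$ (in the $\sZ^d$-adjacency on the renormalized lattice), the large clusters $C_i$ and $C_j$ are actually connected inside $B'_i \cup B'_j$; (c) the local density $|C_i \cap B_i|/\cL^d(B_i)$ lies in the interval $(\theta_p - \delta/2,\,\theta_p + \delta/2)$. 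By the renormalization results of Grimmett--Marstrand (see e.g.\ Theorem 7.68 in \cite{Grimmett99}) together with Pisztora's density estimate \cite{Pisztora}, each of (a), (b), (c) fails with probability at most $c_1 \e^{-c_2 k}$ for constants depending only on $d$ and $p$, so by choosing $k$ sufficiently large the per-block badness probability is as small as desired.

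Next I would exploit the key observation that $\{B_i \text{ is good}\}$ depends only on edges inside $B'_i$, hence only on configurations in a bounded number $\kappa_d$ of neighbours. Partitioning the index set $\{1,\ldots,N\}$ into $\kappa_d$ subclasses on which the good-block events are independent, a standard Chernoff bound for sums of i.i.d.\ Bernoullis yields that the number of bad blocks exceeds $\delta N / (4\kappa_d)$ with probability at most $c_1 \e^{-c_2 N}$, which is much smaller than $\e^{-c_2 r^{d-1}}$. On the complementary event, conditions (a)--(b) force all the good local clusters to merge into a single cluster $\widehat C$ that traverses $Q$, and a final Pisztora-type ``escape to infinity'' event on a boundary layer of $Q$ of width $k$ (again exponentially likely in $k$) guarantees that $\widehat C \subset \sC_\infty$. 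The probability of this last event failing decays like $\exp(-c_2 r^{d-1})$, which is what ultimately determines the order of decay in the statement.

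On the resulting good event, the contribution to $|\sC_\infty \cap Q|$ from good blocks equals $\sum_{i\text{ good}} |C_i \cap B_i|$, each summand lying in $(\theta_p - \delta/2,\,\theta_p+\delta/2)\cL^d(B_i)$ by (c). The contribution from bad blocks (and from the thin boundary layer where the tiling is incomplete) is trivially bounded by their total volume, which we arranged to be at most $\delta \cL^d(Q)/4$. Summing over all blocks yields $|\sC_\infty \cap Q|/\cL^d(Q) \in (\theta_p - \delta, \theta_p + \delta)$ on the good event, and the probability of the complement is at most $c_1 \e^{-c_2 r^{d-1}}$ as required.

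The main obstacle is not the concentration of the good-block count (which is easy and even gives a better-than-needed volume-order bound) but rather controlling the global connectivity: namely, showing that the local large clusters $C_i$ really are pieces of the \emph{infinite} cluster, as opposed to forming a single large but finite cluster inside $Q$. This is exactly the point where Pisztora's surface-order interface estimate is invoked and where the $\e^{-c_2 r^{d-1}}$ rate (rather than a stronger volume rate) enters. Beyond this input, the argument is a routine renormalization and Chernoff-type estimate.
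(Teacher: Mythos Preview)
Your sketch is correct and captures the renormalization strategy by which such density estimates are established. However, the paper itself does not supply a proof of this proposition: it simply states it as ``a corollary of the results in \cite{Pisztora}'', and indeed the statement is essentially a direct instance of Pisztora's surface-order large deviation theorem for the density of the infinite cluster in a large box. Your argument therefore provides considerably more detail than the paper does---you are in effect reconstructing the skeleton of Pisztora's proof rather than quoting the result. You correctly identify the decisive point: the surface-order rate $r^{d-1}$ is not driven by the concentration of the bad-block count (which, as you note, gives a stronger volume-order bound), but by the event that the large cluster assembled inside $Q$ actually connects to infinity, which is a genuine interface event at surface order. One minor clarification worth making explicit in your upper-bound step: in a good block $B_i$, uniqueness of the large cluster in $B'_i$ forces $\sC_\infty \cap B_i \subset C_i$ (any vertex of $\sC_\infty$ in $B_i$ has a path leaving $B'_i$, hence lies in a cluster of diameter $\geq k$), so the density bound (c) on $C_i$ indeed controls $|\sC_\infty \cap B_i|$ from above as well as below.
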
 
\begin{proof}[Proof of Proposition \ref{propcontiguity} ] Let $\delta>0$ and $\ep>0$ that we will choose later. Let $u:]0,+\infty[\rightarrow ]0,+\infty[$ be a non-decreasing function such that $\lim_{t\rightarrow 0}u(t)=0$. Let $n\geq 1$. Let $\fF_n$ be a finite set of uniformly continuous function that satisfies:
$$\forall f\in\fF_n\quad \|f\|_\infty\leq 1\qquad\text{ and }\qquad\forall x,y\in\sR^d\quad |f(x)-f(y)|\leq u(\|x-y\|_2)\,,$$
We define $$\widetilde{\mu}_n=\frac{1}{n^d}\sum_{x\in V(F_n)}\delta _{x/n}\,.$$
Thanks to Theorem \ref{ULD}, there exists a constant $\eta_3$ depending only on the dimension such that 
$$\Prb\left[n\varphi_n\geq \eta_3 \,\Big |\,0\in\sC_\infty\right] \leq C_1\exp(-C_2n)\,.$$
Let $G_n\in\cG_n$, with probability at least $1-C_1\exp(-C_2n)$, we have $$\frac{n|\partial ^o G_n|}{|G_n|}\leq \eta_3,$$
and so $|\partial ^o G_n|\leq \eta_3 n ^{d-1}$. As each small component $S_j$ is such that $\partial^o S_j \cap \partial^o G_n\neq \emptyset$, the number $N$ of small components is at most $\eta_3 n ^{d-1}$ and by definition of $F_n$,
$$|F_n\setminus G_n|\leq \sum_{j=1}^N |S_j|\leq \eta_3 n^{d-1/2(d-1)}\,.$$
Finally, with probability at least $1-C_1\exp(-C_2n)$, for all $f\in\fF_n$, $$|\mu_n(f)-\widetilde{\mu}_n(f)|\leq \frac{1}{n^d}\|f\|_\infty |F_n\setminus G_n| \leq \eta_3 n^{-1/2(d-1)}\,,$$
and
\begin{align}\label{delta}
\underline{\Prb}\left[\max_{G_n\in\cG_n}\,\sup_{f\in\fF_n}|\mu_n(f\ind_{B(x,r)})-\nu_n(f\ind_{B(x,r)})|>\eta_3 n^{-1/2(d-1)}\right]\leq C_1\e^{-C_2n}
\end{align}
where $\underline{\Prb}$ represents the probability measure conditioned on the event $\{0\in\sC_\infty\}$.
Let $x\in \sR^d$ and let $r>0$. Let $f\in\fF_n$. We now would like to estimate the quantity $$|\widetilde{\mu}_n(f\ind_{B(x,r)})-\nu_n(f\ind_{B(x,r)})|\,.$$
 We adapt the proof of 16.2 in \cite{Cerf-Pisztora}. We use again a renormalization argument but at a different scale $L=K \ln n$. We consider the lattice rescaled by this factor $L$. We say that a cluster $C$ is crossing in a box $B$ if for any two opposite faces of $B$, the cluster $C$ contains an open path in $B$ that joins these two faces. Let $\ep>0$. For $\underline{y}\in \sZ^d$,  we define $B_n(\underline{y})=(2L\underline{y}/n)+[-L/n,L/n]^d$
and $B'_n(\underline{y})=(2L\underline{y}/n)+[-3L/n,3L/n]^d$. Let $X(\underline{y})$ be the indicator function of the event $\cE_n(\underline{y})$.
This event occurs if 

\noindent $\bullet$ Inside $nB'_n(\underline{y})$, there is a unique crossing cluster $C'$ that crosses the $3^d$ sub-boxes of $nB'_n(\underline{y})$. Moreover, $C'$ is the only cluster in $nB'_n(\underline{y})$ of diameter larger than $L$.

\noindent $\bullet$ Inside $nB_n(\underline{y})$, there is a crossing cluster $C^*$ such that $$|C^*|\geq (\theta_p-\ep) \cL ^d(nB_n(\underline{y}))\,.$$

\noindent $\bullet$ We have $\left|\{x\in nB_n(\underline{y}): x\longleftrightarrow\partial nB_n(\underline{y}\}\right|\leq (\theta_p+\ep) \cL ^d( nB_n(\underline{y}))$.

On the event $\cE_n(\underline{y})$, any cluster $C\subset nB_n(\underline{y})$ that is connected by an open path to $\partial (nB'_n(\underline{y}))$ is the unique crossing cluster, \textit{i.e.}, $C=C'=C^*$ and so it also satisfies 
$$\dfrac{|C|}{\cL^d(nB_n(\underline{y}))}\in[\theta_p-\ep,\theta_p+\ep]\,.$$
The family $(X(\underline{y}))_{\underline{y}\in\sZ^d}$ is a site percolation process on the macroscopic lattice. The states of the sites are not independent from each other but there is only a short range dependency. Indeed, for any $\underline{y}$ and $\underline{z}$ such that $|\underline{y}-\underline{z}|_\infty \geq 3$, we have that $X(\underline{y})$ and $X(\underline{z})$ are independent. We define the connected component  $\sC(\underline{y})$ of $\underline{y}$ as
$$\sC(\underline{y})=\Big\{\,\underline{z}\in\sZ^d\,:\,\text{$\underline{z}$ is connected to $\underline{y}$ by a macroscopic open path}\,\Big\}\,.$$
Let $$\underline{D}=\{\underline{y}\in\sZ^d:\, B_n(\underline{y})\subset B(x,r)\}\,.$$
We have 
\begin{align}\label{eq'3}
|\underline{D}|L^d\leq n^d\cL ^d(B(x,r)).
\end{align}
There exists an integer $n_0=n_0(u(\ep))$ such that, for $n\geq n_0(u(\ep))$, we have $L/n\leq u(\ep)$ so that
$$\cL^d\left(B(x,r)\setminus \bigcup_{\underline{y}\in \underline{D}}B_n(\underline{y})\right)\leq \ep \cL^d (B(x,r)),$$
$$\forall w,z\in\sR^d,\, \|w-z\|_2\leq \dfrac{L}{n}\Rightarrow\,|f(x)-f(y)|\leq \ep\,.$$
The last statement comes from the fact that $f$ belongs to $\fF_n$.
By decomposing $|\widetilde{\mu}_n(f\ind_{B(x,r)})-\nu_n(f\ind_{B(x,r)})|$ on cubes of size $L/n$, we obtain:
\begin{align}\label{eq'2}
&|\widetilde{\mu}_n(f\ind_{B(x,r)})-\nu_n(f\ind_{B(x,r)})|\nonumber\\
&\hspace{1cm}\leq 2\cL^d\left( B(x,r)\setminus \bigcup_{\underline{y}\in \underline{D}}B_n(\underline{y})\right)+\sum_{\underline{y}\in \underline{D}}\left|\int_{B_n(\underline{y})}fd\widetilde{\mu}_n-\int_{B_n(\underline{y})}fd\nu_n\right|\nonumber\\
&\hspace{1cm}\leq 4\ep \cL^d(B(x,r))+\sum_{\underline{y}\in \underline{D}}\left|\widetilde{\mu}_n(B_n(\underline{y}))-\nu_n(B_n(\underline{y}))\right|\,.
\end{align}
Let $\underline{y}\in \underline{D}$. We need to distinguish several cases:

\noindent $\bullet$ If $B_n(\underline{y})\cap P_n=\emptyset$, then $\nu_n(B_n(\underline{y}))=\widetilde{\mu}_n(B_n(\underline{y}))=0$. From now on we will only consider cubes such that $B_n(\underline{y})\cap P_n\neq\emptyset$.

\noindent $\bullet$ If $B_n(\underline{y})\not\subset P_n$, then we bound $$|\widetilde{\mu}_n(B_n(\underline{y}))-\nu_n(B_n(\underline{y}))|\leq \dfrac{1}{n^d}|B_n(\underline{y})|$$
and as $B_n(\underline{y})\cap P_n\neq\emptyset$, the cube intersects the boundary of $P_n$. Thus,
$$B_n(\underline{y})\subset \left\{z\in\sR^d:\, d_\infty (z,\partial P_n\cap B(x,r))\leq \dfrac{L}{n}\right\}\,.$$
Moreover,
\begin{align*}
&\cL^d\left(\left\{z\in\sR^d:\, d_\infty (z,\partial P_n\cap B(x,r))\leq \dfrac{L}{n}\right\} \right)\\
&\hspace{0.5cm}\leq \Big|\big\{\,x\in H_n,\,\exists y\in \sZ^d\setminus H_n, \,\|x-y\|_1=1\,\big\} \cap B(nx,nr+d)\Big|\left(\frac{2L+2}{n} \right)^d\\
&\hspace{0.5cm}\leq \cP(nP_n,B(nx,nr+d))\left(\frac{3L}{n} \right)^d\\
&\hspace{0.5cm}\leq \cP(P_n,B(x,r+d))\frac{(3L)^d}{n}\,.
\end{align*}

\noindent $\bullet$ If $B_n(\underline{y})\subset P_n$ and $|\sC(\underline{y})|=\infty$, then the crossing cluster $C^*$ of $B_n(\underline{y})$ is a portion of $\sC_\infty$ and $$\nu_n(B_n(\underline{y}))=\theta_p \frac{\cL^d(nB_n(\underline{y}))}{n^d}\text{ and }\widetilde{\mu}_n(B_n(\underline{y}))=\frac{|(nB_n(\underline{y}))\cap C^* |}{n^d}\,.$$
Thus, we have $$\widetilde{\mu}_n(B_n(\underline{y}))\in \left[(\theta_p-\ep)\cL^d(B_n(\underline{y})),(\theta_p+\ep)\cL^d(B_n(\underline{y}))\right]$$
and 
$$|\widetilde{\mu}_n(B_n(\underline{y}))-\nu_n(B_n(\underline{y}))|\leq \ep\cL^d(B_n(\underline{y}))\,.$$

\noindent $\bullet$ If $B_n(\underline{y})\subset P_n$ and $|\sC(\underline{y})|<\infty$, then we bound $$|\widetilde{\mu}_n(B_n(\underline{y}))-\nu_n(B_n(\underline{y}))|\leq \cL^d(B_n(\underline{y}))\ind_{|\sC(\underline{y})|<\infty}$$ 
By summing the previous inequalities over $\underline{y}\in \underline{D}$, thanks to inequality \eqref{eq'3} and \eqref{eq'2}, we obtain
\begin{align*}
&|\widetilde{\mu}_n(f\ind_{B(x,r)})-\nu_n(f\ind_{B(x,r)})|\\
&\hspace{1cm}\leq\cL^d(B(x,r))\Big( 5\ep+ \frac{1}{|\underline{D}|}\sum_{\underline{y}\in\underline{D}}\ind_ {|\sC(\underline{y})|<\infty}\Big)\nonumber+\cP(P_n,B(x,r+d))\frac{(3L)^d}{n}\,.
\end{align*}
Let $c(r)=6\cL^d(B(0,r))+3^d$, we get
\begin{align}\label{eqbis1}
&\underline{\Prb}\left[\max_{ G_n\in\cG_n}\,\sup_{f\in\fF_n}|\widetilde{\mu}_n(f\ind_{B(x,r)})-\nu_n(f\ind_{B(x,r)})|>c(r)\ep\right]\nonumber\\
&\hspace{0.5cm}\leq \frac{1}{\theta_p}\Prb\left[\frac{1}{|\underline{D}|}\sum_{\underline{y}\in\underline{D}}\ind_ {|\sC(\underline{y})|<\infty}\geq \ep\right]+\underline{\Prb}\left[\max_{G_n\in\cG_n}\cP(P_n,B(x,r+d))\geq\ep\frac{n}{L^d}\right]\,.
\end{align}
Besides, using Corollary \ref{cor}, for $n$ large enough, we obtain
\begin{align}\label{ineq1}
\underline{\Prb}\left[\max _{ G_n\in\cG_n}\,\cP(P_n,B(x,r+d))\geq\ep\frac{n}{L^d}\right]\leq \underline{\Prb}\Big[\max_{G_n\in\cG_n}\cP(P_n)\geq\beta\Big]\leq c_1\e^{-c_2n^{1-3/2d}}\,.
\end{align}
Let $\Lambda$ be the cube centered at $x$ of side length $2r$. We define $$\underline{\Lambda}=\Big\{\,\underline{y}\in\sZ^d:\, B_n(\underline{y})\subset\Lambda\,\Big\}\,.$$ As $B(x,r)\subset \Lambda$, we have $\underline{D}\subset \underline{\Lambda}$ and
\begin{align}\label{eq'4}
\frac{1}{|\underline{D}|}\sum_{\underline{y}\in\underline{D}}\ind_ {|\sC(\underline{y})|<\infty}\leq \frac{(2d)^d}{|\underline{\Lambda}|}\sum_{\underline{y}\in\underline{\Lambda}}\ind_ {|\sC(\underline{y})|<\infty}\,.
\end{align}
Let $q\in[0,1]$ be such that $\theta_q>1-\ep/(2(2d)^d)$. As  the family $(X(\underline{y}))_{\underline{y}\in \sZ^d}$ is identically distributed, has a short range dependency and is such that $\Prb(X(\underline{0})=1)$ goes to $1$ when $n$ goes to infinity (see for instance Chapter 9 in \cite{Cerf:StFlour}), then we can apply Liggett Schonmann and Stacey's result \cite{Liggett}: for $n$ large enough, the family $(X(\underline{y}),\underline{y}\in \sZ^d)$  stochastically dominates $(\widetilde{X}(\underline{y}),\underline{y}\in \sZ^d)$ a family of independent Bernoulli variable of parameter $q$. We denote by $\smash{\underline{\widetilde{\sC}}_\infty}$ the unique infinite cluster of the Bernoulli field $(\widetilde{X}(\underline{y}))_{\underline{y}\in \sZ^d}$.  Using inequality \eqref{eq'4} and the stochastic domination, we get
\begin{align*}
\Prb\left[\frac{1}{|\underline{D}|}\sum_{\underline{y}\in\underline{D}}\ind_ {|\sC(\underline{y})|<\infty}\geq \ep\right]&\leq \Prb\left[\frac{(2d)^d}{|\underline{\Lambda}|}\sum_{\underline{y}\in\underline{\Lambda}}\ind_ {|\sC(\underline{y})|<\infty}\geq \ep\right]\nonumber\\
&\leq \Prb\left[\frac{1}{|\underline{\Lambda}|}\sum_{\underline{y}\in\underline{\Lambda}}\ind_ {\underline{y}\notin  \widetilde{\underline{\sC}}_\infty}\geq \frac{\ep}{(2d)^d}\right]\nonumber\\
&\leq \Prb\left[\frac{\left|\underline{\Lambda}\cap \widetilde{\underline{\sC}}_\infty\right|} {|\underline{\Lambda}|}\notin\left(\theta_q-\frac{\ep}{2(2d)^d},\theta_q+\frac{\ep}{2(2d)^d}\right)\right]\,.
\end{align*}
Using Proposition \ref{Pizt}, we obtain
\begin{align}\label{ineq2}
\Prb\left[\frac{1}{|\underline{D}|}\sum_{\underline{y}\in\underline{D}}\ind_ {|\sC(\underline{y})|<\infty}\geq \ep\right]
\leq c'_1\exp\left(-c'_2\left(\frac{rn}{L}\right)^{d-1}\right)\,.
\end{align}


\noindent We set $\ep =\delta/(2c(r))$. Finally, thanks to inequalities \eqref{delta}, \eqref{eqbis1}, \eqref{ineq1} and \eqref{ineq2}, we have for $n\geq n_0(u(\ep))$
\begin{align*}
&\underline{\Prb}\left[\max_{G_n\in\cG_n}\,\sup_{f\in\fF_n}|\widetilde{\mu}_n(f\ind_{B(x,r)})-\nu_n(f\ind_{B(x,r)})|>\delta\right]\\
&\hspace{1.8cm}\leq \underline{\Prb}\left[\max_{G_n\in\cG_n}\,\sup_{f\in\fF_n}|\mu_n(f)-\widetilde{\mu}_n(f)|>\delta/2\right]\\
&\hspace{2.5cm}+\underline{\Prb}\left[\max_{G_n\in\cG_n}\,\sup_{f\in\fF_n}|\widetilde{\mu}_n(f\ind_{B(x,r)})-\nu_n(f\ind_{B(x,r)})|>c(r)\ep\right]\\
&\hspace{1.8cm}\leq C_1\exp(-C_2n)+  \frac{c'_1}{\theta_p}\exp\left(-c'_2\left(\frac{rn}{L}\right)^{d-1}\right)+c_1\e^{-c_2n^{1-3/2d}}\,.
\end{align*}
The result follows.
\end{proof}

\section{Lower large deviations and shape Theorem}\label{sectionLLD}
\subsection{Closeness to the set of Wulff shapes}
The aim of this section is to prove Theorem \ref{prel}. 

\begin{proof}[Proof of Theorem \ref{prel}]
 Let $\ep>0$. Let $\xi>0$ that we will choose later depending on $\ep$.  We define $\lambda$ such that
$$1-\lambda=\frac{1}{1+\xi}\,.$$
 We denote by $W_\xi$:
$$W_\xi=\left\{ \nu_{W+x}: \begin{array}{c} x\in\sR^d,\,W\text{ is a dilate of $W_p$ such that}\\\cL^d((1-\lambda)W_p)\leq \cL^d(W)\leq \cL^d((1+2\xi)W_p)
\end{array}\right\}\,.$$  Let $u:]0,+\infty[\rightarrow ]0,+\infty[$ be a non-decreasing function such that $\lim_{t\rightarrow 0}u(t)=0$. Let $n\geq 1$. Let $\fF_n$ be a finite set of uniformly continuous function that satisfies for all $f\in\fF_n,$
$$ \|f\|_\infty\leq 1\qquad\text{ and }\qquad\forall x,y\in\sR^d,\, |f(x)-f(y)|\leq u(\|x-y\|_2)\,.$$ We define the weak neighborhood $\cV(\cW_\xi,\fF_n,\ep)$ of $\cW_\xi$ given $\fF_n$ and $\ep$ as
 $$\cV(\cW_\xi,\fF_n,\ep)=\Big\{\nu \in\cM(\sR^d)\,\,:\exists \mu\in\cW_\xi,\, \sup_{f\in\fF_n}|\nu(f)-\mu(f)|\leq \ep\,\Big\}\,.$$
Our goal is to show that $\mu_n$ is in the set $\cV(\cW_\xi,\fF_n,\ep)$ with high probability.

\noindent \textbf{Step \textit{(i)}:} Let $G_n\in\cG_n$. Thanks to Proposition \ref{propcontiguity}, the measures $\mu_n$ and $\nu_n$ associated with $P_n$ and $G_n$ are locally close to each other. In the following, it will be more convenient to work with the continuous object $P_n$ instead of $G_n$.  We can localize almost all the volume of $P_n$ in a random region that is a union of balls of constant radius. We follow the method in Chapter 17 in \cite{Cerf:StFlour}. We can cover $P_n$ in $\sR^d$, up to a small fractional volume, by a finite number of random disjoint balls of constant size. Thanks to the isoperimetric inequalities, we can then control the volume of $P_n$ outside of these balls.
Let $\delta>0$ be a real number that we will choose later. We denote by $X$:
$$X=\left\{x\in \sZ^d: \cL^d(B(x,1)\cap P_n)\geq \delta\right\}.$$
On the event $\big\{\,|\widetilde{\Gamma}_n|\leq \beta n^{d-1}\,\big\}$, the set $X$ is included in $B(0, \beta n^{d-2})$ and is therefore finite. 
As each point in $\sR^d$ belongs to at most $2^d$ balls among the $B(x,1)$, $x\in \sZ^d$, then using Proposition \ref{isop}
$$\delta |X| \leq \sum_{x\in X}\cL^d (B(x,1)\cap P_n)\leq 2^d \cL^d(P_n) \leq  2^d c_{iso} \cP(P_n) ^{\frac{d}{d-1}} \leq  2^d c_{iso} \beta  ^{\frac{d}{d-1}}$$
and finally $|X|\leq M$ where $M= 2^d c_{iso} \beta ^{\frac{d}{d-1}}/\delta$.
We now would like to control the volume of $P_n$ outside the balls $B(x,1)$ in $X$, \textit{i.e.}, to bound the measure of $P_n\setminus \bigcup_{x\in X}B(x,1)$. For $x\in \sZ^d \setminus X$, by the isoperimetric inequality in Proposition \ref{isop}, we obtain as in section 17 in \cite{Cerf:StFlour} 
\begin{align}\label{contpn}
\cL^d \left(P_n\setminus \bigcup_{x\in X}B(x,1)\right)&\leq \sum_{x\in \sZ^d\setminus X}\cL^d(P_n\cap B(x,1))\nonumber\\
&\leq \delta^{1/d} b_{iso}^{\frac{d}{d-1}} \sum_{x\in \sZ^d\setminus X} \cP(P_n,\mathring{B}(x,1))\nonumber\\
&=\delta^{1/d} b_{iso}^{\frac{d}{d-1}} \sum_{x\in \sZ^d\setminus X} \cH^{d-1}(\partial^* P_n\cap \mathring{B}(x,1))\nonumber\\
&\leq 2^d \delta^{1/d} b_{iso}^{\frac{d}{d-1}} \cH^{d-1}(\partial ^* (P_n))= 2^d \delta^{1/d} b_{iso}^{\frac{d}{d-1}}\cP(P_n)\nonumber\\
&\leq  2^d \delta^{1/d} b_{iso}^{\frac{d}{d-1}}\beta.
\end{align}
We note $\eta = 2^d \delta^{1/d} b_{iso}^{\frac{d}{d-1}} \beta$.
Therefore, if $\cP(P_n)\leq \beta $, then $X\subset B(0, \beta n^{d-2})$, $|X|\leq M $ and $\cL^d (P_n\setminus \cup_{x\in X}B(x,1))\leq \eta $.
 \begin{figure}[H]
\def\svgwidth{0.7\textwidth}
\begin{center}
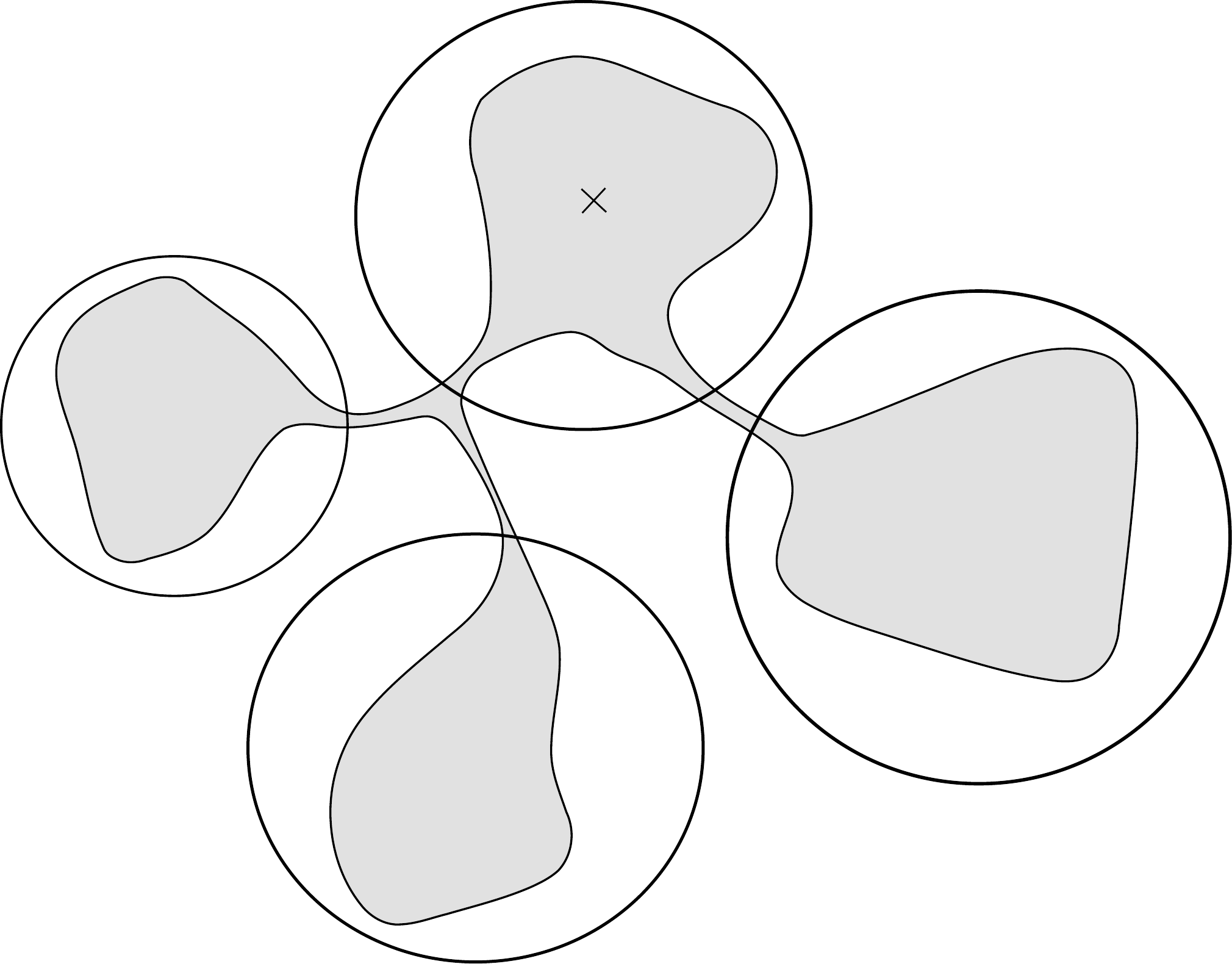
\caption[fig2]{\label{fig4}Covering almost all the volume of $P_n$ by balls of constant radius}
\end{center}
\end{figure}
We next would like to perform a kind of surgery between the balls. To do so, we first shall build from the balls $(B(x,1))_{x\in X}$ a family of balls that covers $\cup_{x\in X}B(x,1)$ and such that the balls are far apart (see Figure \ref{fig4}). This is the purpose of Lemma 17.1. in \cite{Cerf:StFlour}. We obtain a subset $$E(X)=\{(y_1,r_1),\dots,(y_m,r_m)\}\subset X\times \big\{\,1,\dots,3^{|X|}\,\big\}$$ such that $|E(X)|\leq |X|$ and 
\begin{description}[font=$\bullet$]
\item $\forall(a,r)\in E(X),\, B(a,r)\cap X\neq \emptyset$
\item $\cup_{x\in X}B(x,1)\subset \cup _{(a,r)\in E(X)} B(a,r)$
\item $\forall (a,r),\,(b,s)\in E(X),\, (a,r)\neq (b,s)\Rightarrow B(a,r+1)\cap B(b,s+1)=\emptyset$
\end{description}

 We set $$\varphi_{W_p}=\frac{\cI_p(W_p)}{\theta_p\cL^d(W_p)}\,.$$ 
 Let $\delta'>0$ be a real number that we will choose later. By applying Corollary \ref{cor} and Theorem \ref{ULD}, we obtain by conditioning on $E(X)$,
\begin{align}\label{eqE'}
&\Prb\left[\exists G_n\in\cG_n,\, \mu_n \notin \cV(\cW_\xi,\fF_n,\ep)\,\Big |\,0\in\sC_\infty\right]\nonumber\\
&\hspace{.8cm}\leq \underline{\Prb}[\max_{G_n\in\cG_n} \cP(nP_n)\geq \beta n^{d-1}]+\underline{\Prb}[n\varphi_n> (1+\delta')\varphi_{W_p}]\nonumber\\
&\hspace{1cm}+\underline{\Prb}[\exists G_n\in\cG_n,\, \mu_n \notin \cV(\cW_\xi,\fF_n,\ep), \, \cP(P_n)\leq \beta , n\varphi_n\leq (1+\delta')\varphi_{W_p}]\nonumber\\
&\hspace{.8cm}\leq b_1\exp(-b_2 n^{1-3/2d})+ b'_1\exp(-b'_2n)\nonumber \\
&\hspace{1cm}+ \sum_{1\leq m\leq M}\sum_{y_1,\dots,y_m}\sum_{r_1,\dots,r_m} \underline{\Prb}\left[\begin{array}{c}\exists G_n\in\cG_n,\, \mu_n \notin \cV(\cW_\xi,\fF_n,\ep),\\ E(X)=\{(y_1,r_1),\dots,(y_m,r_m)\}\\\cP(P_n)\leq \beta,\, n\varphi_n\leq (1+\delta')\varphi_{W_p} \end{array}\right]\,,
\end{align}
where the second summation is over $y_1,\dots,y_m$ in $\sZ^d \cap B(0, \beta n^{d-2})$ and the third summation is over $r_1,\dots,r_m$ in $\{1,\dots,3^M\}$. The number of ways to choose $m$ and $r_1,\dots,r_m$ is bounded from above by a constant depending only on $M$, while the number of ways of choosing the centers $y_1,\dots, y_m$ is polynomial in $n$. We next control the probability inside the sums. We will only focus on what happens inside the balls.

\noindent\textbf{Step \textit{(ii)}:} Let $\{(y_1,r_1),\dots,(y_m,r_m)\}$ be a value for the random set $E(X)$ which occurs with positive probability. We define $\Omega=\Omega(E(X))$ as 
$$\Omega=\bigcup_{i=1}^m \mathring{B}(y_i,r_i+1)\,,$$
and the restriction  $\overline{P}_n$ of $P_n$ to the balls determined by $E(X)$: $$\overline{P}_n=P_n\cap \left(\bigcup_{i=1}^m \mathring{B}(y_i,r_i+1)\right)\,.$$ 
Thus, using inequality \eqref{contpn}, we have
\begin{align}\label{contpbarn}
\cL^d(P_n\setminus \overline{P}_n)\leq \eta\,. 
\end{align}
We show now that $\nu_{\overline{P}_n}(f)$ is close to $\mu_n(f)$ with high probability on the event $$\big\{\,E(X)=\{(y_1,r_1),\dots,(y_m,r_m)\}\,\big\}\,.$$
It is easy to check that $\fF_n\cup\{1\}$ associated with the function $u$ satisfies the conditions required in Proposition \ref{propcontiguity}. So that applying Proposition \ref{propcontiguity} for every $r\in\{1,\dots,3^M\}$, there exist positive constants $c_1$, $c_2$ depending on $M$, $u$, and $\delta$ such that for all $x\in\sZ^d$
\begin{align*}
&\max_{r\in\{1,\dots,3^M\}}\underline{\Prb}\left[\max_{G_n\in\cG_n}\,\sup_{f\in\fF_n\cup\{1\}}|\nu_{n}(f\ind_{B(x,r)})-\mu_n(f\ind_{B(x,r)})|>\frac{\eta}{M}\right]\\
&\hspace{9cm}\leq c_1\e^{-c_2n^{1-3/2d}}\,.
\end{align*}
Thus, using inequality \eqref{contpbarn}, we obtain
\begin{align}\label{eq262}
&\underline{\Prb}\left[\max_{G_n\in\cG_n}\,\sup_{f\in\fF_n\cup\{1\}}|\nu_{\overline{P}_n}(f)-\mu_n(f)|>2\eta,\,E(X)=\{(y_1,r_1),\dots,(y_m,r_m)\}\right]\nonumber\\
&\hspace{0.5cm}\leq \sum_{i=1}^m\underline{\Prb}\left[\max_{G_n\in\cG_n}\,\sup_{f\in\fF_n\cup\{1\}}|\nu_{\overline{P}_n}(f\ind_{B(y_i,r_i)})-\mu_n(f\ind_{B(y_i,r_i)})|>\eta/M\right]\nonumber\\
&\hspace{0.5cm}\leq M\max_{r\in\{1,\dots,3^M\}}\underline{\Prb}\left[\max_{G_n\in\cG_n}\,\sup_{f\in\fF_n\cup\{1\}}|\nu_{n}(f\ind_{B(y_1,r)})-\mu_n(f\ind_{B(y_1,r)})|>\eta/M\right]\nonumber\\
&\hspace{0.5cm}\leq Mc_1\e^{-c_2n^{1-3/2d}}\,.
\end{align}
In particular, on the event $\{E(X)=\{(y_1,r_1),\dots,(y_m,r_m)\}\}$, with probability at least $1- Mc_1\exp(-c_2n^{1-3/2d} )$, we have
\begin{align}\label{eq261}
\left|\theta_p\cL ^d(\overline{P}_n)-\frac{|G_n|}{n^d}\right|\leq 2\eta\,.
\end{align}
Moreover, by Lemma \ref{nottoobig}, there exist positive constants $\eta_1$, $D_1$ and $D_2$ such that
$$\underline{\Prb}\left[\min_{G_n\in\cG_n}|G_n|\leq \eta_1 n ^d\right]\leq D_1\exp(-D_2n^{(d-1)/2d)})\,.$$
We recall that $\eta$ is a function of $\delta$. We will choose $\delta$ small enough such that 
\begin{align}\label{conddelta1}
\eta\leq \min\left(\frac{\eta_1}{4},\frac{\xi^d}{2},\frac{\ep}{8},\frac{\eta_1}{3\theta_p}\right)\,.
\end{align}
Other conditions will be imposed later on $\delta$.

On the event $\left\{\min_{G_n\in\cG_n}|G_n|> \eta_1 n ^d\right\}$, using inequalities \eqref{eq261} and \eqref{conddelta1}, we obtain
\begin{align}\label{infP}
\cL^d(\overline{P}_n)\geq \frac{1}{\theta_p}\left(\dfrac{|G_n|}{n^d}-2\eta\right)\geq \frac{1}{\theta_p}\left(\eta_1-2\eta\right)\geq\frac{\eta_1}{2\theta_p}
\end{align}
and as $\cL^d(W_p)=1/\theta_p$, using inequality \eqref{eq261}, we have
\begin{align}\label{supP}
\cL^d(\overline{P}_n)\leq \frac{1}{\theta_p}\left(\dfrac{|G_n|}{n^d}+2\eta\right)\leq \frac{1}{\theta_p}(1+\xi^d)= \cL^d(W_p)(1+\xi^d)\leq \cL^d((1+\xi)W_p)\,.
\end{align}
For $\nu \in \cW_\xi$, we have
\begin{align*}
\sup_{f\in\fF_n}|\nu_{\overline{P}_n}(f)-\nu(f)|\geq \sup_{f\in\fF_n}|\mu_n(f)-\nu(f)|-\sup_{f\in\fF_n}|\mu_n(f)-\nu_{\overline{P}_n}(f)|\,,
\end{align*}
so that, together with inequalities \eqref{eq262} and \eqref{conddelta1}, with high probability,
\begin{align*}
\mu_n \notin \cV(\cW_\xi,\fF_n,\ep)\implies \nu_{\overline{P}_n} \notin \cV(\cW_\xi,\fF_n,3\ep/4)\,.
\end{align*}
Thus, combining with inequalities \eqref{infP} and \eqref{supP}, we have
\begin{align}\label{finn1}
&\underline{\Prb}\left[\begin{array}{c}\exists G_n\in\cG_n, \,\mu_n \notin \cV(\cW_\xi,\fF_n,\ep),\,\cP(P_n)\leq \beta\,,\\ E(X)=\{(y_1,r_1),\dots,(y_m,r_m)\},\,
 n\varphi_n\leq (1+\delta')\varphi_{W_p} \end{array}\right]\nonumber\\
 &\hspace{0.2cm}\leq \underline{\Prb}\left[\begin{array}{c}\exists G_n\in\cG_n, \,\nu_{\overline{P}_n}\notin \cV(\cW_\xi,\fF_n,3\ep/4),\,n\varphi_n\leq (1+\delta')\varphi_{W_p} ,\\ \frac{\eta_1}{2\theta_p}\leq\cL^d(\overline{P}_n)\leq  \cL^d((1+\xi)W_p),\,  E(X)=\{(y_1,r_1),\dots,(y_m,r_m)\}\end{array}\right]\nonumber\\
 &\hspace{0.3cm}+Mc_1\e^{-c_2n^{1-3/2d}}+D_1\exp(-D_2n^{(d-1)/2d})\,.
\end{align}
We do not cover $\overline{P}_n$ directly but we cover separately each $\overline{P}_n\cap B(y_k,r_k+1)$ for $k\in\{1,\dots,m\}$.
For any $r\in\{\,1,\dots,3^M\,\}$, we define the space $$\sC_\beta ^{(r)}=\Big\{\,F\subset \mathring{B}(0,r+1),\,\cP(F, \mathring{B}(0,r+1))\leq \beta\,\Big\}\,$$
endowed with the topology $L^1$ associated to the distance $d(F,F')=\cL^d(F\Delta F')$, where $\Delta $ is the symmetric difference between sets. For this topology, the space $\sC_\beta^{(r)}$ is compact.
Suppose that we associate to each $F\in\sC_\beta ^ {(r)}$ a positive number $\ep_F\leq \min(\eta,\cL^d(\xi W_p))/M$. The collection of open sets 
$$\Big\{\,H\text{ Borel subset of }\mathring{B}(0,r+1)\,:\,\cL^d(H\Delta F)<\ep_F\,\Big\},\,F\in\sC_\beta^{(r)},$$ is then an open covering of $\sC_\beta^{(r)}$. By compactness, we can extract a finite covering $(F_i^{(r)},\ep_{F_i^{(r)}})_{1\leq i\leq N^{(r)}}$ of $\sC_\beta^{(r)}$. 
By union bound, we obtain
\begin{align}\label{finn2}
&\underline{\Prb}\left[\begin{array}{c}\exists G_n\in\cG_n, \,\nu_{\overline{P}_n}\notin \cV(\cW_\xi,\fF_n,3\ep/4),\,n\varphi_n\leq (1+\delta')\varphi_{W_p} ,\\ \frac{\eta_1}{2\theta_p}\leq\cL^d(\overline{P}_n)\leq  \cL^d((1+\xi)W_p),\,  E(X)=\{(y_1,r_1),\dots,(y_m,r_m)\}\end{array}\right]\nonumber\\
&\hspace{6cm}\leq\sum_{i_1=1}^ {N^{(r_1)}}\cdots \sum_{i_m=1}^ {N^{(r_m)}} \underline{\Prb}[\cF_{i_1,\dots,i_m}]
\end{align}
 where
 $$\cF_{i_1,\dots,i_m}=\left\{\begin{array}{c}\exists G_n\in\cG_n\,:\,\forall \,1\leq k\leq m,\\\cL^d((F_{i_k}^{(r_k)}+y_k)\Delta(\overline{P}_n\cap B(y_k,r_k+1)))\leq \ep_{F_{i_k}^{(r_k)}},\\ \nu_{\overline{P}_n}\notin \cV(\cW_\xi,\fF_n,3\ep/4),\,n\varphi_n\leq (1+\delta')\varphi_{W_p} ,\\ \frac{\eta_1}{2\theta_p}\leq\cL^d(\overline{P}_n)\leq  \cL^d((1+\xi)W_p),\\  E(X)=\{(y_1,r_1),\dots,(y_m,r_m)\}\end{array}\right\}.$$
 So we need to study the quantity $\underline{\Prb}[\cF]$ for a generic $m$-uplet $(F_1,\dots,F_m)\in \sC_\beta ^{(r_1)}\times \dots \times  \sC_\beta ^{(r_m)}$ and their associated $\ep_{F_1},\dots, \ep_{F_m} $.
 By definition of the Cheeger constant $\varphi_n$, we obtain
\begin{align*}
\underline{\Prb}[\cF]&=\underline{\Prb}\left[\exists G_n\in\cG_n\,:\,\begin{array}{c}\forall 1\leq i \leq m,\\\cL^d((F_i+y_i)\Delta(\overline{P}_n\cap B(y_i,r_i+1)))\leq \ep_{F_i},\\ \nu_{\overline{P}_n}\notin \cV(\cW_\xi,\fF_n,3\ep/4),\\ |\partial^oG_n|\leq (1+\delta')n^{-1}|G_n|\varphi_{W_p}  ,\\ \frac{\eta_1}{2\theta_p}\leq\cL^d(\overline{P}_n)\leq  \cL^d((1+\xi)W_p),\\  E(X)=\{(y_1,r_1),\dots,(y_m,r_m)\}\end{array}\right]\,.
\end{align*}
To lighten the notations, we set 
$$F=\bigcup_{i=1}^m (F_i+y_i)\,.$$ We have
\begin{align}\label{ep_F}
\cL^d(F\Delta \overline{P}_n)&=\sum_{i=1}^m \cL ^d((\overline{P}_n \cap B(y_i,r_i+1))\Delta(F_i+y_i))\nonumber\\
&\leq \sum_{i=1}^ m \ep_{F_i}\leq \min (\eta, \cL^d(\xi W_p))\,.
\end{align}
Whereas the surface tension of $F$ in the interior of these balls corresponds to the surface tension of our minimizer $G_n$, the surface tension of $F$ along the boundary of the balls $B(y_j,r_j+1)$ does not correspond to the surface tension of $G_n$ because we have artificially created it. Roughly speaking, $F$ is the continuous object corresponding to the graph $G_n$ intersected with the $nB(y_j,r_j+1)$. This new graph has extra surface tension compared to $G_n$ due to the fact that we have built it by cutting $G_n$ along the boundary of these balls. However, our hope is to cut along the boundary of these balls in such a way that the surface tension we create is negligible. We do not work on $G_n$ but on the continuous object $F$, but we have to keep in mind that these two objects are close. The idea is to cut $F$ in the regions $B(y_i,r_i+1)\setminus B(y_i,r_i)$, $i\in \{1,\dots,m\}$. These regions contain a negligible volume of $G_n$ and so of $F$, we want to cut $F$ in these regions along a surface of negligible perimeter and so of negligible surface tension.
By Lemma 14.4 in \cite{Cerf:StFlour}, for $i\in \{1,\dots,m\}$, for $\cH^1$ almost all $t$ in $]0,1[$,
\begin{align}\label{eq1}
\cI (F\cap B(y_i,r_i+t))\leq \cI (F\cap \mathring{B}(y_i,r_i+t))+ \beta_{max}\cH^{d-1}(F\cap \partial B(y_i,r_i+t))\,.
\end{align}
Let $T$ be a subset of $]0,1[$ where all the above inequalities hold simultaneously. We recall that for any $i\in\{1,\dots,m\}$, $\ep_{F_i}\leq\eta/M$. We have $\cH^1 (T)=1$ and when we integrate in polar coordinates, using inequality \eqref{ep_F},
\begin{align*}
\int_T \sum_{i=1}^m \cH^{d-1}(F\cap \partial B(y_i,r_i+t)) dt& = \sum_{i=1}^m\cL ^d(F\cap B(y_i,r_i+1)\setminus B(y_i,r_i)) \\ 
& \leq \sum_{i=1}^m \cL^d((F_i+y_i)\setminus B(y_i,r_i))\\
& \leq \cL^d\Big( \overline{P}_n \setminus \bigcup_{i=1}^m B(y_i,r_i)\Big)+\cL ^d(\overline{P}_n\Delta F)\\
&\leq 2\eta\,.
\end{align*}
Thus, there exists $t\in T$ such that 
\begin{align}\label{eq3b}
\sum_{i=1}^m \cH^{d-1}(F\cap \partial B(y_i,r_i+t))  \leq 3\eta\,. 
\end{align}
We next set 
$$\overline{F}=F\cap \left(\bigcup_{i=1}^m B(y_i,r_i+t)\right)\,.$$
Using inequality \eqref{eq3b}, we get
\begin{align}\label{pbarf}
\cP(\overline{F})&\leq \cP\left(\overline{F},\bigcup_{i=1}^m \mathring{B}(y_i,r_i+t)\right)+ \sum_{i=1}^m \cH^{d-1}(F\cap \partial B(y_i,r_i+t)) \nonumber\\
&\leq \cP\left(\overline{F},\bigcup_{i=1}^m \mathring{B}(y_i,r_i+t)\right)+3\eta\,,
\end{align}
and using Proposition \ref{propI},
\begin{align}\label{eqIp}
\cI_p(\overline{F})\leq \cI_p\left(\overline{F}, \bigcup_{i=1}^m \mathring{B}(y_i,r_i+t)\right)+3\beta_{max}\eta\,.
\end{align}
On the event $\cF$, using inequality \eqref{ep_F}, we obtain 
\begin{align}\label{ldf}
\cL^d(F)&\leq \cL^d(\overline{F})+\cL^d\left(F\setminus \bigcup _{i=1}^m B(y_i,r_i)\right)\nonumber\\
&\leq \cL^d(\overline{F})+\cL^d(F\Delta \overline{P}_n)+\cL^d\left(\overline{P}_n\setminus \bigcup _{i=1}^m B(y_i,r_i)\right)\leq  \cL^d(\overline{F})+2\eta\,.
\end{align}
Finally, using inequalities \eqref{pbarf} and \eqref{ldf}, we obtain
\begin{align}\label{eqIp2}
\cI_p\left(\overline{F},\bigcup_{i=1}^m \mathring{B}(y_i,r_i+t)\right)&\geq \beta_{min}\,\cP\left(\overline{F},\bigcup_{i=1}^m \mathring{B}(y_i,r_i+t)\right)\nonumber\\
&\geq \beta_{min} (\cP(\overline{F})-3\eta)\,.
\end{align}
and using again inequality \eqref{ep_F},
\begin{align}\label{eqIpp}
\cL^d(F)\geq \cL^d(\overline{P}_n)-\cL^d(\overline{P}_n\Delta F)\geq \frac{\eta_1}{2\theta_p}-\eta\,.
\end{align}
Using the isoperimetric inequalities of Proposition \ref{isop} and inequalities \eqref{ldf} and \eqref{eqIpp}, we get
\begin{align}\label{eqIp3}
\cP(\overline{F})&\geq \left(\frac{\cL^d(\overline{F})}{c_{iso}}\right)^{1-1/d}\geq \left(\frac{\cL^d(F)-2\eta}{c_{iso}}\right)^{1-1/d}\geq\left(\frac{\eta_1-6\eta\theta_p}{2\theta_p c_{iso}}\right)^{1-1/d}\,.
\end{align} 
Next, we choose $\delta$ small enough to obtain a $\eta$ that satisfies the following inequalities:
\begin{align}\label{conddelta11}
3\beta_{max}\eta\leq \frac{\lambda\beta_{min}}{2}\left(\left(\frac{\eta_1-6\eta\theta_p}{2\theta_p c_{iso}}\right)^{1-1/d}-3\eta\right)\,, 
\end{align}
and also $$\eta_1\geq 6\eta\theta_p\,.$$ With this choice of $\delta$, we obtain with high probability, using inequalities \eqref{eqIp}, \eqref{eqIp2} and \eqref{eqIp3},
\begin{align}\label{eql}
\cI_p(\overline{F})\leq (1+\lambda/2)\cI_p\left(\overline{F}, \bigcup_{i=1}^m \mathring{B}(y_i,r_i+t)\right)\leq (1+\lambda/2)\cI_p(F,\Omega) \,.
\end{align}
Let $G_n\in\cG_n$, on the event $\cF$, we have $$\left|\theta_p\cL^d (\overline{P}_n)- \dfrac{|G_n|}{n^d}\right|\leq 2\eta\,.$$ So that, together with inequality \eqref{ldf},
\begin{align*}
|G_n|&\leq n^{d}(\theta_p \cL^d(F)+\theta_p\cL^d(\overline{P}_n\Delta F)+2\eta)\\
&\leq n^{d}(\theta_p \cL^d(\overline{F})+\ep_F+4\eta)\\
&\leq n^{d}\theta_p \cL^d(\overline{F})\left(1+\frac{5\eta}{\theta_p\cL^d(\overline{F})}\right)\,.
\end{align*}
Let us now choose $\delta$ small enough so that
\begin{align}\label{conddelta2}
\frac{5\eta}{\eta_1/2-3\eta\theta_p}\leq \delta'\,.
\end{align}
Using inequalities \eqref{ldf} and \eqref{eqIpp}, we obtain
\begin{align*}
|G_n|&\leq n^{d}\theta_p \cL^d(\overline{F})\left(1+\frac{5\eta}{\eta_1/2-3\eta\theta_p}\right)\\
&\leq n^{d}\theta_p \cL^d(\overline{F})(1+\delta').
\end{align*}
Finally, let $r$ be such that $\cL^d(\overline{F})=\cL^d(rW_p)$, we get
\begin{align*}
(1+\delta')n^{-1}|G_n|\varphi_{W_p}
&\leq (1+\delta')^2n^{d-1} \frac{\varphi_{W_p} }{\varphi_{\overline{F}}}\cI_p(\overline{F}) \\
&\leq (1+\delta')^2\frac{ \cI_p(rW_p)}{\cI_p(\overline{F})}rn^{d-1} \cI_p(\overline{F}) \,.
\end{align*}
We now choose $\delta'$ small enough such that
\begin{align}\label{conddelta'}
(1+\delta')^2 (1-\lambda)\leq 1-\frac{\lambda}{2}\,.
\end{align}
Using inequality \eqref{ep_F}, we obtain  $$\cL^d(\overline{F})\leq \cL^d(\overline{P}_n)+\cL^d(\overline{P_n}\Delta F)\leq \cL ^d((1+\xi )W_p)+\cL ^d(\xi W_p)\leq \cL ^d((1+2\xi )W_p)\,$$
and so $r\leq 1+2\xi$.
We distinguish now two cases:\\
$\bullet$ If $r\leq 1-\lambda$, using inequality \eqref{conddelta'}
\begin{align*}
(1+\delta')^2\frac{ \cI_p(rW_p)}{\cI_p(\overline{F})}rn^{d-1} \cI_p(\overline{F}) \leq (1-\lambda/2)n^{d-1} \cI_p(\overline{F})
\end{align*}
where we used the fact that the Wulff crystal is a minimizer for $\cI_p$, \textit{i.e.}, that $\cI_p(rW_p)\leq \cI_p(\overline{F})$.\\
$\bullet$ Let us assume that $r\in(1-\lambda,1+2\xi]$. We recall that on the event $\cF$, for all $\nu\in\cW_\xi$, $$\sup_{f\in\fF_n}|\nu_{\overline{P}_n}(f)-\nu(f)|\geq 3\ep/4\,.$$ 
Thus, for all $x\in\sR^d$, for $f\in\fF_n$ we have
\begin{align*}
|\nu_{\overline{P}_n}(f)-\nu_{rW_p+x}(f)|&\leq\left| \int_{\overline{P}_n\setminus (rW_p+x)}f(x)d\cL ^d(x)-\int_{(rW_p+x)\setminus \overline{P}_n}f(x)d\cL ^d(x)\right|\\
&\leq \int_{\overline{P}_n\setminus (rW_p+x)}|f(x)|d\cL ^d(x)+\int_{(rW_p+x)\setminus \overline{P}_n}|f(x)|d\cL ^d(x)\\
&\leq \int_{\overline{P}_n\setminus (rW_p+x)}1d\cL ^d(x)+\int_{(rW_p+x)\setminus \overline{P}_n}1 d\cL ^d(x)\\
&\leq \cL^d\big(\overline{P}_n\Delta (rW_p+x)\big)\,,
\end{align*}
and so,
\begin{align*}
\cL^d\big(\overline{P}_n\Delta (rW_p+x)\big)\geq \sup_{f\in\fF_n}|\nu_{\overline{P}_n}(f)-\nu_{rW_p+x}(f)|\geq 3\ep/4
\end{align*}
and as $\eta$ satisfies inequality \eqref{conddelta1}, we obtain
\begin{align*}
\cL^d\big(\overline{F}\Delta (rW_p+x)\big)\geq \cL^d\big(\overline{P}_n\Delta (rW_p+x)\big)-\cL^d\big(\overline{F}\Delta\overline{P}_n\big)\geq 3\ep/4-\eta\geq \ep/2.
\end{align*}
Moreover, as $rW_p$ is a minimizer for the isoperimetric problem, there exists a constant $c(\ep)>0$, that is a non-decreasing function of $\ep$ depending also on $p$ and $r$, that goes to $0$ when $\ep$ goes to $0$, such that
\begin{align*}
\inf\Big\{\,\cI(E)\,: \,\forall x\in\sR^d,\,\cL^d(E\Delta(x+rW_p))\geq \ep/2,\,\cL^d(E)=\cL^d(rW_p)\,\Big\}\\\geq\cI_p(rW_p)(1+c(\ep))\,.
\end{align*}
Finally, 
$$\frac{ \cI_p(rW_p)}{\cI_p(\overline{F})}\leq \frac{1}{1+c(\ep)}\,$$
and so,
$$(1+\delta')^2\frac{ \cI_p(rW_p)}{\cI_p(\overline{F})}rn^{d-1} \cI_p(\overline{F}) \leq \frac{(1+\delta')^2}{1+c(\ep)} (1+2\xi)n^{d-1} \cI_p(\overline{F})\,.$$
We choose $\xi$ small enough depending on $\ep$ such that 
$$\frac{1+2\xi}{1+c(\ep)}\leq 1-\lambda =\frac{1}{1+\xi}\,.$$
This is equivalent to choose $\xi$ such that 
\begin{align}\label{condxi}
3\xi +2\xi^2 \leq c(\ep)\,.
\end{align}
We obtain using inequality \eqref{conddelta'}
$$(1+\delta')^2\frac{ \cI_p(rW_p)}{\cI_p(\overline{F})}rn^{d-1} \cI_p(\overline{F}) \leq (1-\lambda/2)n^{d-1} \cI_p(\overline{F})\,.$$
Finally, combining the two cases, with $\ep$ and $\delta'$ properly chosen and inequality \eqref{eql}, we obtain
\begin{align}\label{eq3}
\underline{\Prb}[\cF]\leq \underline{\Prb}\left[\exists G_n\in\cG_n : \begin{array}{c}\forall 1\leq i\leq m,\\\cL^d((\overline{P}_n\cap B(y_i,r_i+1))\Delta (F_i+y_i))\leq \ep_{F_i}, \\ 
 |\partial^oG_n|\leq \left(1-\frac{\lambda^2}{4}\right)n^{d-1}\cI_p(F,\Omega), \\E(X)=\{(y_1,r_1),\dots,(y_m,r_m)\} \end{array}\right]\,.
 \end{align}

\noindent\textbf{Step \textit{(iii)}:} The remaining of the proof follows the same ideas as in \cite{CerfTheret09infc}. We link the probability defined in the right hand side of \eqref{eq3} with the probability that the flow is abnormally small in some local region of $\partial F\cap \Omega\,.$ We now want to cover $\partial F$ by balls of small radius such that $\partial {F}$ is "almost flat" in each ball, this is the purpose of the following Lemma:
\begin{lem}\label{lemball} [Lemma 1 in \cite{CerfTheret09infc}] Let $R>0$. Let $F$ be a subset of $\mathring{B}(0,R)$ of finite perimeter. For every positive constants $\delta'$ and $\eta'$, there exists a finite family of closed disjoint balls $(B(x_i,\rho_i))_{i\in I \cup K}$ and vectors $(v_i)_{i\in I \cup K}$, such that, letting $B_i=B(x_i,\rho_i)$ and $B_i^-=B^-(x_i,\rho_i,v_i)$, we have for all $i\in I$
$$ x_i\in \partial^* F \cap \mathring{B}(0,R), \, \rho_i\in ]0,1[, \, B_i\subset \mathring{B}(0,R),\, \cL^d((F\cap B_i)\Delta B_i^-)\leq \delta' \alpha_d \rho_i^d,$$
and
$$\left|\cI_p(F,\mathring{B}(0,R))- \sum_{i\in I} \alpha_{d-1}\rho_i^{d-1}(\nu(n_F(x_i))\right |\leq \eta'. $$
\end{lem}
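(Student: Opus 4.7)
The approach is to combine De Giorgi's structure theorem for sets of finite perimeter with a Vitali-type covering argument. The key input is the fact that at $\cH^{d-1}$-almost every $x \in \partial^* F$, both a blow-up property
\begin{equation*}
\lim_{r \downarrow 0}\frac{\cL^d((F\cap B(x,r))\Delta B^-(x,r,n_F(x)))}{\alpha_d r^d}=0
\end{equation*}
and a density property
\begin{equation*}
\lim_{r \downarrow 0}\frac{\cH^{d-1}(\partial^* F\cap B(x,r))}{\alpha_{d-1}r^{d-1}}=1
\end{equation*}
hold simultaneously. Since the normal field $n_F$ is $\cH^{d-1}$-measurable on $\partial^* F$ and $\beta_p$ is continuous on $\sS^{d-1}$, a Lusin/Lebesgue differentiation argument further yields the approximate continuity of $\beta_p\circ n_F$ at $\cH^{d-1}$-almost every point of $\partial^* F$.

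Fix $\delta'>0$ and $\eta'>0$, and let $\varepsilon>0$ be an auxiliary parameter to be calibrated in terms of $\eta'$ and $\cP(F,\mathring{B}(0,R))$. For each $x$ on the full-measure subset above, I choose a threshold $r_0(x)\in\,]0,1[$ so that for every $r\in\,]0,r_0(x)[$ with $B(x,r)\subset\mathring{B}(0,R)$, the flatness bound $\cL^d((F\cap B(x,r))\Delta B^-(x,r,n_F(x)))\leq\delta'\alpha_d r^d$ holds, the density ratio lies in $(1-\varepsilon,1+\varepsilon)$, and $|\beta_p(n_F(y))-\beta_p(n_F(x))|\leq\varepsilon$ for $\cH^{d-1}$-a.e. $y\in\partial^* F\cap B(x,r)$. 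The family of admissible balls $B(x,r)$ for such $x$ and $r$ forms a fine Vitali cover of a full $\cH^{d-1}$-measure subset of $\partial^* F\cap\mathring{B}(0,R)$; applying Vitali's covering theorem and using $\cP(F,\mathring{B}(0,R))<\infty$, I extract a finite disjoint subfamily $(B(x_i,\rho_i))_{i\in I}$ whose complement in $\partial^* F$ has $\cH^{d-1}$-measure at most $\eta'/(3\beta_{max})$, and I set $v_i=n_F(x_i)$.

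The final additive error
\begin{equation*}
\Big|\cI_p(F,\mathring{B}(0,R))-\sum_{i\in I}\alpha_{d-1}\rho_i^{d-1}\beta_p(n_F(x_i))\Big|
\end{equation*}
is then split by inserting the intermediate quantities $\sum_i\cI_p(F,B_i)$ and $\sum_i\cH^{d-1}(\partial^* F\cap B_i)\,\beta_p(n_F(x_i))$. The first difference equals $\cI_p(F,\mathring{B}(0,R)\setminus\bigcup_iB_i)$ and is bounded by $\beta_{max}$ times the residual $\cH^{d-1}$-measure; the second is bounded by $\varepsilon\,\cP(F,\mathring{B}(0,R))$ via the approximate continuity of $\beta_p\circ n_F$; the third follows from the density estimate $|\cH^{d-1}(\partial^* F\cap B_i)-\alpha_{d-1}\rho_i^{d-1}|\leq\varepsilon\alpha_{d-1}\rho_i^{d-1}$. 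With $\varepsilon$ and the residual measure tuned small, each of the three pieces is below $\eta'/3$. The auxiliary index set $K$ plays no role in either displayed inequality; it is populated with whatever extra disjoint balls are convenient for the external application (e.g.\ to cover a collar near $\partial B(0,R)$), without imposing further constraints.

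The main obstacle is the simultaneous calibration on each selected ball of three thresholds --- the flatness radius, the density ratio, and the approximate continuity radius of $\beta_p\circ n_F$. This is resolved by restricting to the full $\cH^{d-1}$-measure subset of $\partial^* F$ where all three properties hold, and defining $r_0(x)$ as the minimum of the three individual radii, which remains positive at every such $x$; the fine Vitali cover is then formed using radii strictly below this common threshold, so that all three estimates are available on every selected ball.
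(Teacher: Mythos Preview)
The paper does not supply its own proof of this lemma: it is quoted verbatim as Lemma~1 of \cite{CerfTheret09infc} and used as a black box, so there is no argument in the present paper to compare yours against.

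Your outline is the standard one and is correct. The combination of De Giorgi's structure theorem (giving the half-space blow-up at $\cH^{d-1}$-a.e.\ point of $\partial^*F$), the density property of the reduced boundary, Lebesgue-point behaviour of the bounded measurable function $\beta_p\circ n_F$ with respect to $\cH^{d-1}\llcorner\partial^*F$, and a Vitali covering for this finite Radon measure is exactly how such ``almost-flat covering'' lemmas are established; this is also the route taken in \cite{CerfTheret09infc}. Your three-term splitting of the surface-energy error (residual mass outside the selected balls, oscillation of $\beta_p\circ n_F$ on each ball, density ratio error) is the natural bookkeeping, and the passage from a countable Vitali family to a finite one uses only that $\cP(F,\mathring B(0,R))<\infty$. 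Your remark that the index set $K$ carries no constraint in the two displayed conditions is accurate: in the present paper only the balls indexed by $I$ are ever used, and $K$ is an artifact of the original statement in \cite{CerfTheret09infc}, where additional properties are imposed on those balls that are irrelevant here.
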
 
We apply Lemma \ref{lemball} to each $F_k\subset \mathring{B}(0,r_k+1)$, with $\delta_2>0$ that will be chosen later and $\eta' =\lambda^4 \cI_p(F,\Omega)/16M$. We obtain for each $k$, a family $\big(B^{(k)}_i\big(x^{(k)}_i,\rho^{(k)}_i,
v^{(k)}_i\big)\big)_{i\in I^{(k)}}$ that does not depend on $y_1,\dots,y_m$, so that 
\begin{align}\label{pasdidee}
\left|\cI_p(F_k,\mathring{B}(0,r_k+1))- \sum_{i\in I} \alpha_{d-1}(\rho_i^{(k)})^{d-1}(\nu(n_{F_k}(x^{(k)}_i))\right |\leq \eta'. 
\end{align}
 We now choose 
\begin{align}\label{condep} 
 \ep_{F_k}\leq\min\left(\,\min_{i\in I^{(k)}}\alpha_d (\rho^{(k)}_i)^d\delta_2,\,\frac{\eta}{M},\,\frac{\cL^d(\xi W_p)}{M}\,\right)\,,
\end{align} 
  for a fixed $\delta_2$ that we will choose later. Besides, as the balls $B(y_k,r_k+1)$ are disjoint, for $k\in\{1,\dots,m\}$, we have
$$\cI_p(F,\Omega)=\sum_{k=1}^m \cI_p 
(F\cap B(y_k,r_k+1),\Omega)=\sum_{k=1}^m \cI_p 
(F_k,\mathring{B}(0,r_k+1))\,.$$ Using inequality \eqref{pasdidee}, we obtain 
 $$\left|\cI_p(F,\Omega)- \sum_{k=1}^m\sum_{i\in I^{(k)}} \alpha_{d-1}(\rho_i^{(k)})^{d-1}\nu(n_{F_k}(x_i^{(k)}))\right |\leq m\eta'\leq \lambda^4 \cI_p(F,\Omega)/16\,.$$ 
 So, we get
 $$\cI_p(F,\Omega)\leq \frac{1}{1-\lambda^4 /16}\left(\sum_{k=1}^m\sum_{i\in I^{(k)}} \alpha_{d-1}(\rho_i^{(k)})^{d-1}\nu(n_{F_k}(x_i^{(k)}))\right)$$
 and 
$$\left(1-\dfrac{\lambda^2}{4}\right)\cI_p(F,\Omega)\leq \frac{1-\lambda ^2/4}{1- \lambda^4/16}\left(\sum_{k=1}^m\sum_{i\in I^{(k)}} \alpha_{d-1}(\rho_i^{(k)})^{d-1}\nu(n_{F_k}(x_i^{(k)}))\right)\,.$$ 
Whence setting  $w=\lambda^2/(4+\lambda^2)<1$,
\begin{align}\label{fin1}
\left(1-\dfrac{\lambda^2}{4}\right)\cI_p(F,\Omega)\leq (1- w)\left(\sum_{k=1}^m\sum_{i\in I^{(k)}} \alpha_{d-1}(\rho_i^{(k)})^{d-1}\nu(n_{F_k}(x_i^{(k)}))\right)\,.
\end{align}
Since the balls $(B^{(k)}_i+y_k)_{1\leq k\leq m,\, i\in I^{(k)}}$ are pairwise disjoint, we have
\begin{align}\label{fin2}
|\partial^o G_n|\geq \sum_{k=1}^m\sum_{i\in I^{(k)}}|(\partial ^o G_n)\cap (n(B_i^{(k)}+y_k))|\,.
\end{align}
Using inequalities \eqref{fin1} and \eqref{fin2}, we get
\begin{align}\label{eqGint}
&\underline{\Prb}\left[\exists G_n\in\cG_n,\begin{array}{c} \cL^d((\overline{P}_n\cap B(y_i,r_i+1))\Delta (F_i+y_i))\leq \ep_{F_i},\,1\leq i\leq m, \\  |\partial^oG_n|\leq (1-\lambda^2/4)n^{d-1} \cI_p(F,\Omega),\\E(X)=\{(y_1,r_1),\dots,(y_m,r_m)\}\end{array} \right]\nonumber\\
&\hspace{0.1cm}\leq \underline{\Prb}\left[\begin{array}{c} \exists G_n\in\cG_n,\,\cL^d((\overline{P}_n\cap B(y_i,r_i+1))\Delta (F_i+y_i))\leq \ep_{F_i},\,1\leq i\leq m,\\
\sum_{k=1}^m\sum_{i\in I^{(k)}} |(\partial ^oG_n)\cap (n(B_i^{(k)}+y_k))|\hfill\\\hfill\leq (1- w)n^{d-1}\left(\sum_{k=1}^m\sum_{i\in I^{(k)}} \alpha_{d-1}(\rho_i^{(k)})^{d-1}\nu(n_{F_k}(x_i^{(k)}))\right)\end{array}\right]\,.
\end{align}
Let $k\in\{1,\dots,m\}$. We aim to control $\card ((G_n\cap n (B_i^{(k)}+y_k))\Delta (n (B_i^ {(k)}+y_k)^-\cap\sZ^d) )$.  To do so, it is more convenient to work with the graph $F_n$. In the following, we drop the superscript $(k)$ for clarity. With high probability, we have
\begin{align*}
&\card ((G_n\cap n (B_i^{(k)}+y_k))\Delta (n (B_i^ {(k)}+y_k)^-\cap\sZ^d) )\\
&\hspace{2cm}\leq \card ((F_n\cap n(B_i+y_k))\Delta (n(B_i+y_k)^-\cap \sZ^d))+\card(F_n\setminus G_n)\\
&\hspace{2cm}\leq  \card ((F_n\cap n(B_i+y_k))\Delta (n(B_i+y_k)^-\cap \sZ^d))+\eta_3n^{d-1/2(d-1)}\,.
\end{align*}
As $B_i+y_k\subset B(y_k,r_k+1)$, we have
\begin{align*}
\cL^d((nP_n\cap n (B_i+y_k))\Delta (n (B_i+y_k)^-))&\leq \cL^d((nF_k\cap n B_i)\Delta (n B_i^-))\\
&\hspace{0.4cm}+n^d\cL^d(P_n\Delta (F_k+y_k))\\
&\leq  n^d\alpha_d \rho_i^d\delta_2+ \ep_{F_k}\leq  2n^d\alpha_d \rho_i^d\delta_2\,.
\end{align*}
By the same arguments as in section 5.2 in  \cite{CerfTheret09infc},
\begin{align*}
\card& ((F_n\cap n(B_i+y_k))\Delta n(B_i+y_k)^-))\\
&\hspace{1.5cm}\leq \cL^d(((nP_n\cap n (B_i+y_k))\Delta n (B_i+y_k)^-)\cap\sZ^d+[-1/2,1/2]^d)\\
&\hspace{1.5cm}\leq 2 n^d\alpha_d \rho_i^d\delta_2+ n^{d-1} 4d(\cH^{d-1}(\partial B_i)+\cH^{d-1}(\partial B_i^-))\,.
\end{align*}
Finally, for $n$ large enough,
$$\card ((G_n\cap n( B_i+y_k))\Delta (n (B_i+y_k)^-\cap\sZ^d))\leq 4n^d\alpha_d \rho_i^d\delta_2\,.$$
Thus, using inequality \eqref{eqGint}, for large enough $n$, 
\begin{align}
&\underline{\Prb}\left[\begin{array}{c}\exists G_n\in\cG_n,\cL^d((\overline{P}_n\cap B(y_i,r_i+1))\Delta (F_i+y_i))\leq \ep_{F_i},\,1\leq i\leq m,\, \\ |\partial^oG_n|\leq (1-\lambda^2/4)n^{d-1} \cI_p(F,\Omega),\,E(X)=\{(y_1,r_1),\dots,(y_m,r_m)\}\end{array} \right]\nonumber\\
&\hspace{0.5cm} \leq\sum_{k=1} ^m\sum_{i\in I^{(k)}}\underline{\Prb}\left[\begin{array}{c}\exists G_n\in\cG_n, \,\\\big|(G_n\cap n (B_i+y_k))\Delta (n (B_i^-+y_k)\cap\sZ^d)\big|\leq 4\delta_2 \alpha_d \rho_i^d n^d,\\  |(\partial ^oG_n)\cap n(B_i+y_k)|\hfill\\ \hfill \leq (1- w)n^{d-1}\left(\alpha_{d-1}\rho_i^{d-1}\nu(n_{F_k}(x_i^{(k)}))\right)\end{array}\right]\nonumber\\
&\hspace{0.5cm} \leq\frac{1}{\theta_p}\sum_{k=1} ^m\sum_{i\in I^{(k)}}\Prb[G(x^{(k)}_i+y_k,\rho^{(k)}_i,n_{F_k}(x_i^{(k)}),w,\delta_2)]\,
\end{align}
 where $G(x,r,v,w,\delta_2)$ is the event that there exists a set $U\subset B\cap \sZ^d$ such that:
 $$\card (U\Delta (n B^-(x,r,v)\cap\sZ ^d))\leq  4\delta_2 \alpha_d r^d n^d$$
 and
 $$|(\partial ^oG_n)\cap n B|\leq (1-w) \alpha_{d-1}r^{d-1}(\nu(v)n ^{d-1}\,.$$

This event depends only on the edges inside $B(x,r,v)$ and is invariant under integer translation. So that,
\begin{align}\label{eqG}
&\underline{\Prb}\left[\begin{array}{c}\exists G_n\in\cG_n,\cL^d((\overline{P}_n\cap B(y_i,r_i+1))\Delta (F_i+y_i))\leq \ep_{F_i},\,1\leq i\leq m,\, \\ |\partial^oG_n|\leq (1-\lambda^2/4)n^{d-1} \cI_p(F,\Omega),\,E(X)=\{(y_1,r_1),\dots,(y_m,r_m)\}\end{array} \right]\nonumber\\
&\hspace{0.5cm} \leq\frac{1}{\theta_p}\sum_{k=1} ^m\sum_{i\in I^{(k)}}\Prb[G(x^{(k)}_i,\rho^{(k)}_i,n_{F_k}(x_i^{(k)}),w,\delta_2)]\,.
\end{align}
 This event is a rare event. Indeed, if this event occurs, we can show that the capacity of the minimal cutset that separates the upper half part of $B(x,r,v)$ (upper half part according to the direction $v$) from the lower half part is abnormally small. To do so, we build from the set $U$ an almost flat cutset in the ball. The fact that $\card (U\Delta B^-(x,r,v))$ is small implies that $\partial_ e U$ is almost flat and is close to $\disc(x,r,v)$. However, this does not prevent the existence of long thin strands that might escape the ball and prevent $U$ from being a cutset in the ball. The idea is to cut these strands by adding edges at a fixed height. We have to choose the appropriate height to ensure that the extra edges we needed to add to cut these strands are not too many, so that we can control their capacity. The new set of edges we create by adding to $U$ these edges will be in a sense a cutset. The last thing to do is then to cover the $\disc(x,r,v)$ by hyperrectangles in order to use the estimate that the flow is abnormally small in a cylinder. This work was done in section 6 in \cite{CerfTheret09infc}. It is possible to choose $\delta_2$ depending on $F_1,\dots, F_m $, $G$ and $w$ such that for all $k\in\{1,\dots,m\}$, there exist positive constants $C^{F_k}_{1,i}$ and $C^{F_k}_{2,i}$ depending on $G$, $d$, $F_k$, $i$ and $w$ so that for all $i\in I^{(k)}$, 
$$\Prb[G(x_i,\rho_i,n_{F_k}(x_i),w,\delta_2)]\leq C^{F_k}_{1,i}\exp(-C^{F_k}_{2,i}n^{d-1})\,.$$
Note that this upper bound is uniform on $y_1,\dots,y_m$ but still depends on $r_1,\dots,r_m$.
Together with inequalities \eqref{eq3} and \eqref{eqG}, we obtain 
\begin{align*}
\underline{\Prb}[\cF]&\leq\underline{\Prb}\left[\begin{array}{c}\exists G_n\in\cG_n, \,\cL^d((\overline{P}_n\cap B(y_i,r_i+1))\Delta (F_i+y_i))\leq \ep_{F_i},\,1\leq i\leq m,\\\,  |\partial^oG_n|\leq (1-\lambda^2/4)n^{d-1} \cI_p(F,\Omega),\\ E(X)=\{(y_1,r_1),\dots,(y_m,r_m)\} \end{array}\right]\\
& \leq\frac{1}{\theta_p}\sum_{k=1}^m\sum_{i\in I^{(k)}} C^{F_k}_{1,i}\exp(-C^{F_k}_{2,i}n^{d-1})\,.
\end{align*}
So there exist positive constants $C_1^{F_1},\dots,C_1^{F_m}$ and $C_2^{F_1},\dots,C_2^{F_m}$ such that
\begin{align}\label{eqG'}
\underline{\Prb}[\cF]\leq \sum_{k=1}^mC_1^{F_k}\exp(-C_2^{F_k}n ^{d-1})\,.
\end{align}
Combining inequalities \eqref{eqE'}, \eqref{finn1}, \eqref{finn2} and \eqref{eqG'}, we obtain for small enough $\delta_2$,
\begin{align}\label{eqE2}
&\Prb\left[\exists G_n\in\cG_n, \,\mu_n \notin \cV(\cW_\xi,\fF_n,\ep)\,|\,0\in\sC_\infty\,\right]\nonumber\\
&\hspace{0.4cm}\leq  b_1\e^{-b_2n^{1-3/2d}}+ b'_1\e^{-b'_2n}+ \sum_{m=1}^M\sum_{y_1,\dots,y_m}\sum_{r_1,\dots,r_m}\sum_{i_1=1}^ {N^{(r_1)}}\cdots \sum_{i_m=1}^ {N^{(r_m)}} \underline{\Prb}[\cF_{i_1,\dots,i_m}]\nonumber\\
&\hspace{0.6cm}+ M3^{M^2}C_dn^{M(d-2)}\Big(Mc_1\e^{-c_2n^{1-3/2d}}+D_1\e^{-D_2n^{(d-1)/2d}}\Big)\nonumber\\
&\hspace{0.4cm}\leq  b_1\e^{-b_2n^{1-3/2d}}+ b'_1\e^{-b'_2n}\nonumber \\
&\hspace{0.6cm}+ \sum_{m=1}^M\sum_{y_1,\dots,y_m}\sum_{r_1,\dots,r_m} \sum_{i_1=1}^ {N^{(r_1)}}\cdots \sum_{i_m=1}^ {N^{(r_m)}}\sum_{k=1} ^m\frac{ C^{F_{i_k}^{(r_k)}}_{1}}{\theta_p}\e^{-C^{F_{i_k}^{(r_k)}}_{2}n^{d-1}}\,\nonumber\\
&\hspace{1.2cm}+ M3^{M^2}C_dn^{M(d-2)}\Big(Mc_1\e^{-c_2n^{1-3/2d}}+D_1\e^{-D_2n^{(d-1)/2d}}\Big)\nonumber\\
&\hspace{0.4cm}\leq  b_1\e^{-b_2n^{1-3/2d}}+ b'_1\e^{-b'_2n}\nonumber \\
&\hspace{0.6cm}+ \sum_{m=1}^M\sum_{y_1,\dots,y_m} 3^{M^2}\max_{r_1,\dots,r_m}\Bigg\{ \sum_{i_1=1}^ {N^{(r_1)}}\cdots \sum_{i_m=1}^ {N^{(r_m)}} \sum_{k=1} ^m \frac{C^{F_{i_k}^{(r_k)}}_{1}}{\theta_p}\e^{-C^{F_{i_k}^{(r_k)}}_{2}n^{d-1}}\Bigg\}\,\nonumber\\
&\hspace{1.2cm}+ M3^{M^2}C_dn^{M(d-2)}\Big(Mc_1\e^{-c_2n^{1-3/2d}}+D_1\e^{-D_2n^{(d-1)/2d}}\Big)\nonumber\\
&\hspace{0.4cm}\leq  b_1\e^{-b_2n^{1-3/2d}}+ b'_1\e^{-b'_2n}\nonumber \\&\hspace{0.6cm}+ C_dn^{M(d-2)}\sum_{m=1}^M3^{M^2}\max_{r_1,\dots,r_m}\Bigg\{ \sum_{i_1=1}^ {N^{(r_1)}}\cdots \sum_{i_m=1}^ {N^{(r_m)}} \sum_{k=1} ^m \frac{C^{F_{i_k}^{(r_k)}}_{1}}{\theta_p}\e^{-C^{F_{i_k}^{(r_k)}}_{2}n^{d-1}}\Bigg\}\,\nonumber\\
&\hspace{0.6cm}+ M3^{M^2}C_dn^{M(d-2)}\Big(Mc_1\e^{-c_2n^{1-3/2d}}+D_1\e^{-D_2n^{(d-1)/2d}}\Big)
\end{align}
where $C_d$ is a constant depending only on the dimension and the maximum is over $r_1,\dots,r_m\in\{1,\dots,3^M\}$. We recall that $M$, $N$ and the number of ways of choosing $r_1,\dots,r_m$ are finite and independent of $n$.
\begin{rk}To obtain inequality \eqref{eqE2}, it is crucial to use a covering of $\sC_\beta$ that is uniform in $y_1,\dots,y_m$.
\end{rk}
Let us assume $\mu_n\notin \cV(\cW,\fF_n,2\ep)$. Let $\nu\in\cW_\xi$, we can write $\nu=\nu_{x+rW_p}$ with $x\in\sR^d$ and $r\in[1-\lambda,1+2\xi]$. We have for all $f\in\fF_n$
\begin{align}\label{distwulff}
|\nu_{x+W_p}(f)-\nu_{x+rW_p}(f)|&\leq \max\left(\cL^d(W_p\setminus(1-\lambda)W_p),\,\cL^d((1+2\xi)W_p\setminus W_p)\right)\nonumber\\
&\leq c(p,d,\xi)
\end{align}
where $c(p,d,\xi)$ is a constant that goes to $0$ when $\xi$ goes to $0$. So that 
$$\sup_{f\in\fF_n}|\nu_{x+W_p}(f)-\nu_{x+rW_p}(f)|\leq c(p,d,\xi)\,.$$
As $\mu_n\notin \cV(\cW,\fF_n,2\ep)$, we have
$$\sup_{f\in\fF_n}|\mu_n(f)-\nu_{x+W_p}(f)|>2\ep\,.$$
So that up to choosing a smaller $\xi$, we have 
\begin{align}\label{cond1} 
c(p,d,\xi)\leq \ep
\end{align}
and so 
\begin{align*}
\underline{\Prb}&[\exists G_n\in\cG_n,\,\forall\nu\in\cW,\,\sup_{f\in\fF_n}|\mu_n(f)-\nu_{x+W_p}(f)|>2\ep]\\
&\hspace{2cm}\leq \underline{\Prb}[\exists G_n\in\cG_n,\,\forall\nu\in\cW_\xi,\,\sup_{f\in\fF_n}|\mu_n(f)-\nu_{x+W_p}(f)|>\ep]\,.
\end{align*}
Finally, using \eqref{eqE2}, there exist positive constants $C_1$ and $C_2$ depending on $\ep$, $u$, $p$ and $d$ such that for all $n\geq 1$,
$$\Prb\left[\exists G_n\in\cG_n,\,\mu_n \notin \cV(\cW,\fF_n,2\ep)\,\big|\,0\in\sC_\infty\right]\leq C_1\e^{-c_2n^{1-3/2d}}\,$$
and the result follows.

To conclude, let us sum up the order in which the constants are chosen. We first choose $\ep>0$. Next, we choose $\xi$ small enough such that it satisfies both inequalities \eqref{condxi} and \eqref{cond1}, and $\delta'$ such that it satisfies inequality \eqref{conddelta'}. Next, we choose $\delta$ such that $\eta(\delta)$ satisfies inequalities \eqref{conddelta1}, \eqref{conddelta11} and \eqref{conddelta2}. We choose $\delta_2$ depending on $w$ (and so on $\ep$) and $G$. The parameter $\delta_2$ has to satisfy some inequalities that we do not detail here, we refer to section 7 in \cite{CerfTheret09infc}. Finally, to each $r$ in $\{1,\dots, 3 ^M\}$, to each $F\in\sC_\beta ^{(r)}$, we choose $\ep_F$ in such a way it satifies  inequality \eqref{condep}.
 \end{proof}
\subsection{Proof of Theorem \ref{LLD}} 

In this section we prove Theorem \ref{LLD}. Thanks to Theorem \ref{prel}, we know that with high probability $\mu_n$ is close to the set $\cW$ and so it is close to the measure of a translate of the Wulff shape. In fact, as $\mu_n$ has its support included in $B(0,n^{d-1})$, the measure $\mu_n$ is close to $\cW_n$, the set of measures defined as:$$\cW_n=\Big\{\nu_{x+W_p},\,x\in B(0,n^{d-1})\,\Big\}\,.$$ The continuous set $\cW_n$ can be approximated by a finite set $\widetilde{\cW}$ containing a polynomial number of measures such that $\mu_n$ is close to $\widetilde{\cW}$ and so is close to at least one measure in $\widetilde{\cW}$.
  Let $\ep>0$ and let $w>0$ be a real number depending on $\ep$ that we will choose later. We first use Lemma \ref{lemball}, to cover $W_p$ by a finite number of balls of small radius such that $W_p$ is almost flat in each ball. Let $\delta_2$ that will be chosen later and let $(B(x_k,\rho_k,v_k))_{k\in J}$ be a family associated to $W_p, \delta_2, \ep$ that satisfies the conditions stated in Lemma \ref{lemball}. We will use this covering for all the translates of the Wulff shape. We set $\ep_W=\min_{k\in J} \alpha_d \rho_k^d\delta_2$. We now cover $\cW_n$ by a polynomial in $n$ number of balls of radius less than $\ep_W$. Let $\xi>0$ small enough such that  $$\forall x,y\in\sR^d,\,\,\|x-y\|_2\leq \xi\,\implies \,\cL^d\left((x+W_p)\Delta(y+W_p)\right)\leq \frac{\ep_W}{4}\,.$$ By construction, $\mu_n$ has its support included in $B(0,n^{d-1})$. We can cover $B(0,n^{d-1})$ by a polynomial in $n$ number of balls of radius $\xi$.  More precisely, there exist $z_1,\dots,z_{M'}\in B(0,n^{d-1})$, such that $M'$ is polynomial in $n$ and 
$$B(0,n^{d-1})\subset \bigcup_{i=1}^{M'}B( z_i,\xi)\,.$$ 
\smash{We set} $$\widetilde{ \cW}=\big\{\,\nu_{z_i+W_p},i=1,\dots,M'\,\big\}\,.$$  Let $\delta>0$ we will choose later. We define $W_p^\delta$ and $W_p^{-\delta}$ as
$$W_p^\delta=\{x\in\sR^d\,:\,d_2(x,W_p)\leq \delta\} \text{ and } W_p^{-\delta}=\{x \in W_p\,:\,d_2(x,\partial W_p)\geq \delta\}\,.$$
Let us define $g$ as
$$g(x) = \left\{
    \begin{array}{ll}
         \min(d_2(x,W_p)/\delta, 1)& \mbox{if} \,\,x\in\sR ^d \setminus W_p\\
        -\min(d_2(x,\partial W_p)/\delta, 1) & \mbox{if}\,\, x\in W_p
    \end{array}
\right.\, .$$
The function $g$ is uniformly continuous and satisfies $\|g\|_\infty\leq 1$. For each $i\in\{1,\dots,M'\}$, we define $g_i$ by $g_i(x)=g(x-z_i)$ for $x\in\sR^d$, and $\fF=\{g_i,\,1\leq i\leq M'\}\cup\{1\}$. The set $\fF$ is a set made of translates of $g$ and the constant function equal to $1$. If the measure $\mu_n$ is in the local weak neighborhood $\cV(\cW,\fF,\frac{\ep_W}{4})$, then there exists $\nu_{x+W_p}$ in $\cV(\cW_n,\fF,\frac{\ep_W}{4})$ such that
$$\sup_{f\in\fF}|\nu_{x+W_p}(f)-\mu_n(f)|\leq \frac{\ep_W}{4}\,.$$
Moreover there exists an $i\in\{1,\dots,M'\}$ such that $x\in B(z_i,\xi)$ and so $$\sup_{f\in\fF}|\nu_{x+W_p}(f)-\nu_{z_i+W_p}(f)|\leq \cL^d\left((x+W_p)\Delta(z_i+W_p)\right)\leq \frac{\ep_W}{4}$$ and also
$$\mu_n\in\cV\big(\widetilde{\cW},\fF,\ep_W/2\big)\,.$$
Let us choose $r>0$ large enough so that the ball $B(0,r-2d)$ contains $W_p$. For $x\in\sR^d$, we define $\lfloor x\rfloor$ to be the closest point to $x$ in $\sZ^d$ for the Euclidean distance. For any $i\in\{1,\dots,M'\}$, we have $$W+z_i\subset B(\lfloor z_i\rfloor,r)\,.$$ Let us define the function $u$ such that for all $\iota >0$,
$$u(\iota)=\min \left(\sup\big\{\,\delta>0, \,\forall x,y\in\sR^d, \,\|x-y\|_2\leq \delta \implies |g(x)-g(y)|\leq \iota\,\big\},1\right)\,.$$ As the function $g$ is uniformly continuous, the function $u$ is positive. Moreover, as $\fF$ is made of translated of $g$ and the constant function equal to $1$, it is clear that this set satisfies the condition stated in Proposition \ref{propcontiguity} associated with the function $u$. Using Proposition \ref{propcontiguity} with the function $u$, there exist positive constants $C_1$, $C_2$ depending only on $r$, $u$, $p$ and $\ep_W$ such that for all $i\in\{1,\dots,M'\}$
\begin{align}\label{eqcontig}
\underline{\Prb}\left[\max_{G_n\in\cG_n}\,\sup_{f\in\fF}|\mu_n(f\ind_{B(\lfloor z_i\rfloor,r)})-\nu_n(f\ind_{B(\lfloor z_i\rfloor,r)})|>\ep_W/4\right]\leq C_1\e^{-c_2n^{1-3/2d}}\,.
\end{align}
The point of choosing such a set $\fF$ is that we can deduce from the fact that the quantity $\sup_{f\in\fF}|\mu_n(f)-\nu_{W+z_i}(f)|$ is small that the associated symmetric difference $\cL^d((P_n\cap B(\lfloor z_i\rfloor,r))\Delta(z_i+W_p))$ is small.
Indeed, we have
\begin{align}\label{diffsym}
&\cL^d((P_n\cap B(\lfloor z_i\rfloor,r))\Delta(z_i+W_p))\nonumber\\
&\hspace{0.5cm}=\int _{(P_n\cap B(\lfloor z_i\rfloor,r))\setminus (z_i+W_p)}1d\cL^d(x)+ \int _{(z_i+W_p)\setminus P_n}1d\cL^d(x)\nonumber\\
&\hspace{0.5cm}\leq \int _{(P_n\cap B(\lfloor z_i\rfloor,r))\setminus (z_i+W_p)}g_i(x)d\cL^d(x)- \int _{(z_i+W_p)\setminus P_n}g_i(x)d\cL^d(x)\nonumber\\
&\hspace{1cm}+\cL^d(W^\delta_p\setminus W^{-\delta}_p)\nonumber\\
&\hspace{0.5cm}=|\nu_n(g_i\ind_{B(\lfloor z_i\rfloor,r))})-\nu_{W+z_i}(g_i\ind_{B(\lfloor z_i\rfloor,r))})|+\cL^d(W^\delta_p\setminus W^{-\delta}_p)\nonumber\\
&\hspace{0.5cm} \leq \sup_{f\in\fF}|\mu_n(f\ind_{B(\lfloor z_i\rfloor,r)})-\nu_n(f\ind_{B(\lfloor z_i\rfloor,r)})|\nonumber\\
&\hspace{1cm}+ \sup_{f\in\fF}|\mu_n(f\ind_{B(\lfloor z_i\rfloor,r)})-\nu_{W+z_i}(f\ind_{B(\lfloor z_i\rfloor,r)})| +\cL^d(W^\delta_p\setminus W^{-\delta}_p)\,.
\end{align}
So we choose $\delta$ small enough so that 
\begin{align}\label{choixdelta}
\cL^d(W^\delta_p\setminus W^{-\delta}_p) \leq \frac{\ep_W}{4}\,.
\end{align}
Moreover, we have
\begin{align}\label{f1}
\underline{\Prb}&\left[\exists G_n\in\cG_n,\, |\partial ^o G_n|\leq (1-w)\cI_p(W_p)n ^{d-1},\,\mu_n\in\cV\big(\widetilde{\cW},\fF,\ep_W/2)\right]\nonumber\\
&\hspace{1cm}\leq\sum_{i=1}^ {M'}\Prb\left[\begin{array}{c} \exists G_n\in\cG_n,\,|\partial ^o G_n|\leq (1-w)\cI_p(W_p)n ^{d-1},\\\sup_{f\in\fF} |\mu_n(f)-\nu_{W+z_i}(f)|\leq \ep_W/2\end{array}\,\Big |\,0\in\sC_\infty\right]\,.
\end{align}
Using inequalities \eqref{eqcontig}, \eqref{diffsym} and \eqref{choixdelta}, we obtain 
\begin{align}\label{f12}
&\Prb\left[\exists G_n\in\cG_n,\begin{array}{c} |\partial ^o G_n|\leq (1-w)\cI_p(W_p)n ^{d-1},\\ \sup_{f\in\fF}|\mu_n(f)-\nu_{W+z_i}(f)|\leq \ep_W/2\end{array}\,\Big |\,0\in\sC_\infty\right]\nonumber\\
&\hspace{0.1cm}\leq \underline{\Prb}\left[\exists G_n\in\cG_n,\begin{array}{c} |\partial ^o G_n|\leq (1-w)\cI_p(W_p)n ^{d-1},\\ \sup_{f\in\fF}|\mu_n(f\ind_{B(\lfloor z_i\rfloor,r)})-\nu_{W+z_i}(f\ind_{B(\lfloor z_i\rfloor,r)})|\leq \ep_W/2\end{array}\right]\nonumber\\
&\hspace{0.1cm}\leq \underline{\Prb}\left[\exists G_n\in\cG_n,\begin{array}{c} |\partial ^o G_n|\leq (1-w)\cI_p(W_p)n ^{d-1},\\ \cL^d((P_n\cap B(\lfloor z_i\rfloor,r))\Delta(z_i+W_p))\leq \ep_W\end{array}\right] + C_1\e^{-c_2n^{1-3/2d}}\,.
\end{align}
Finally, we proceed as in inequality \eqref{eqG} in the proof of Theorem \ref{prel}:
\begin{align}\label{f13}
 &\Prb\left[\exists G_n\in\cG_n,\begin{array}{c} |\partial ^o G_n|\leq (1-w)\cI_p(W_p)n ^{d-1},\\ \cL^d((P_n\cap B(\lfloor z_i\rfloor,r))\Delta(z_i+W_p))\leq \ep_W\end{array}\,\Big |\,0\in\sC_\infty\right]\nonumber\\
 &\hspace{4cm}\leq \frac{1}{\theta_p} \sum_{k\in J} \Prb\left[G(z_i+x_k,\rho_k,n_{W_p}(x_k),w,\delta_2)\right]\,.
\end{align}
It is possible to choose $\delta_2$ depending on $W$, $G$ and $w$ (see again section 6 in \cite{CerfTheret09infc}) such that there exist positive constants $C_{1,k}$ and $C_{2,k}$ depending on $G$, $d$, $W$, $k$ and $w$ so that for all $k\in J$,
$$\Prb[G(x_k,\rho_k,n_{W_p}(x_k),w,\delta_2)]\leq C_{1,k}\exp(-C_{2,k}n^{d-1})\,.$$
So combining inequalities \eqref{f1}, \eqref{f12} and \eqref{f13}, we obtain
\begin{align}\label{fin1'}
\underline{\Prb}&\left[\exists G_n\in\cG_n,\,\, |\partial ^o G_n|\leq (1-w)\cI_p(W_p)n ^{d-1},\,\,\mu_n\in\cV\big(\widetilde{\cW},\fF,\ep_W/2)\right]\nonumber\\
&\hspace{2.5cm}\leq M'\Big(C_1\e^{-c_2n^{1-3/2d}}+ \frac{1}{\theta_p}\sum_{k\in J}C_{1,k}\exp(-C_{2,k}n^{d-1})\Big)\,.
\end{align}
Moreover, we have
\begin{align}\label{f2}
\underline{\Prb}&\left[\exists G_n\in\cG_n, \frac{|G_n|}{n^d}\geq  (1+w)\theta_p \cL^d(W_p),\,\mu_n\in\cV\big(\widetilde{\cW},\fF,\ep_W/2)\right]\nonumber\\
&\hspace{2cm}\leq \sum_{i=1}^ {M'}\Prb\left[\begin{array}{c} \exists G_n\in\cG_n,\frac{|G_n|}{n^d}\geq  (1+w)\theta_p \cL^d(W_p),\\ \, |\mu_n(1)-\nu_{W+z_i}(1)|\leq \ep_W/2\end{array}\,\Big |\,0\in\sC_\infty\right]\nonumber\\
& \hspace{2cm}\leq \sum_{i=1}^ {M'}\Prb\left[\begin{array}{c} \exists G_n\in\cG_n,\frac{|G_n|}{n^d}\geq  (1+w)\theta_p \cL^d(W_p),\\ \, \left|\frac{|G_n|}{n^d}-\theta_p\cL^d(W_p)\right|\leq \ep_W/2\end{array}\,\Big |\,0\in\sC_\infty\right]
\end{align}
where we recall that $\theta_p\cL ^d(W_p)=1$, so up to choosing a smaller $\ep_W$, we assume that $\ep_W\leq 2w$ so that the probability in the sum is equal to $0$. 
Finally, combining inequalities \eqref{fin1'} and \eqref{f2}, we obtain
\begin{align}\label{bla2}
\Prb&\left[n\varphi_n \geq \frac{1-w}{1+w}\frac{\cI_p(W_p)}{\theta_p\cL ^d(W_p)}\,\Big |\,0\in\sC_\infty\right]\leq\underline{\Prb}\left[\exists G_n\in\cG_n,\,\mu_n\notin\cV\big(\widetilde{\cW},\fF,\ep_W/2\big)\right]\nonumber\\
&\hspace{2.5cm}+M'\Big(C_1\exp(-C_2n)+ \frac{1}{\theta_p}\sum_{k\in J}C_{1,k}\exp(-C_{2,k}n^{d-1})\Big)\,.
\end{align}
Thanks to Theorem \ref{prel}, there exist positive constants $C'_1$, $C'_2$, depending on $p$, $u$, $\ep_W$ and $d$ such that 
$$\Prb\left[\exists G_n\in\cG_n,\,\mu_n\notin\cV\big(\widetilde{\cW},\fF,\ep_W/2\big)\,\Big |\,0\in\sC_\infty\right]\leq C'_1\exp(-C'_2n^{1-3/2d})\,.$$
By choosing $w$ small enough, we obtain
\begin{align*}
\Prb&\left[n\varphi_n \geq (1-\ep)\frac{\cI_p(W_p)}{\theta_p\cL ^d(W_p)}\,\Big |\,0\in\sC_\infty\right]\\
&\hspace{0.3cm}\leq C'_1\exp(-C'_2 n^{1-3/2d})+M'\Big(C_1\exp(-C_2n)+\sum_{k\in J}C_{1,k}\exp(-C_{2,k}n^{d-1})\Big)\,.
\end{align*}
As $M'$ is polynomial in $n$, the result follows.

\subsection{Proof of Theorem \ref{formeGn}}
Let $\ep>0$. As in the proof of Theorem \ref{LLD}, there exists an integer $M'$ that is polynomial in $n$ and $z_1,\dots,z_{M'}$ points of $B(0,n^{d-1})$ such that for any finite set $\fF$ of continuous functions of infinite norm at most $1$, if $\mu_n\in\cV(\cW,\fF,\ep)$ then  \smash{$\mu_n\in\cV(\widetilde{\cW},\fF,2\ep)$ where $\widetilde{\cW}=\big\{\,\nu_{z_i+W_p},i=1,\dots,M'\,\big\}\,.$ } 
 Let $\delta>0$ we will choose later. Let us define $f$ and $g$ as
$$f(x) =  \min(d_2(x,\sR^d\setminus  W_p^\delta)/\delta, 1),\,\mbox{for $x\in\sR^d$}$$
and
$$g(x) = \min(d_2(x,W_p)/\delta, 1),\,\, \mbox{for $x\in\sR^d$}\,.$$
The functions $f$ and $g$ are uniformly continuous and satisfy $\|f\|_\infty\leq 1$ and $\|g\|_\infty\leq 1$. For each $i\in\{1,\dots,M'\}$, we define $f_i$ by $f_i(x)=f(x-z_i)$  and $g_i$ by $g_i(x)=f(x-z_i)$ for $x\in\sR^d$. We define $$\fF=\{f_i,\,1\leq i\leq M'\}\cup\{g_i,\,1\leq i\leq M'\}\,.$$ 
Let $G_n\in \cG_n$. Let $i\in\{1,\dots,M'\}$. We have
\begin{align}\label{eq25.1}
|G_n\Delta ((n(W_p+z_i))\cap \sC_\infty)|= |G_n\setminus  n(W_p+z_i)|+|(n(W_p+z_i)\cap \sC_\infty)\setminus G_n|\,.
\end{align}
Using a renormalization argument as in the proof of Theorem \ref{ULD}, there exist positive constants $C_1$ and $C_2$ depending on $p$, $\ep$ and $d$ such that for all $i\in\{1,\dots,M'\}$,
$$\Prb\left[\left|\frac{|(n(W+z_i))\cap\sC_\infty|}{n^d}-\theta_p\cL^d(W_p)\right|  \geq \ep \,\Big|\,0\in\sC_\infty\right]\leq C_1\exp(-C_2n)\,.$$
As $G_n\cap (n(W_p+z_i))\subset (n(W_p+z_i))\cap \sC_\infty$, we have with probability at least $1-C_1\exp(-C_2n)$,
\begin{align*}
|((n&(W_p+z_i))\cap \sC_\infty)\setminus G_n|\\
&= |(n(W_p+z_i))\cap \sC_\infty|- |G_n\cap (n(W_p+z_i))|\\
&\leq \theta_p\cL^d(W_p)n^d+n^d\ep-n^d\mu_n(f_i)+|n((W^\delta_p +z_i)\setminus (W_p+z_i))\cap\sZ^d|\,.
\end{align*}
We can find a constant $c(\delta)$ depending only on $\delta$, $p$ and $d$, such that $c(\delta)$ goes to $0$ when $\delta$ goes to $0$ and for all $z\in\sR^d$
$$|n((W^\delta_p +z)\setminus (W_p+z))\cap\sZ^d|\leq c(\delta)n^d\,,$$
so that,
\begin{align}\label{eq25.2}
|((n(W_p+z_i))\cap \sC_\infty)\setminus G_n|&\leq n^d|\nu_{W_p+z_i}(f_i)-\mu_n(f_i)|+(\ep +c(\delta))n ^d\nonumber\\
&\leq n^d\sup_{h\in\fF}|\nu_{W_p+z_i}(h)-\mu_n(h)|+(\ep +c(\delta))n ^d\,.
\end{align}
Moreover, noticing that $\nu_{W_p+z_i}(g_i)=0$, we obtain
\begin{align}\label{eq25.3}
|G_n\setminus n(W+z_i)|&\leq n ^d \mu_n(g_i)+ |n((W^\delta_p +z_i)\setminus (W_p+z_i))\cap\sZ^d|\nonumber\\
&\leq n^d|\mu_n(g_i)-\nu_{W_p+z_i}(g_i)|+n^d c(\delta)\nonumber\\
&\leq n^d\sup_{h\in\fF}|\nu_{W_p+z_i}(h)-\mu_n(h)|+ n^dc(\delta)\,.
\end{align}
Combining inequalities \eqref{eq25.1}, \eqref{eq25.2} and \eqref{eq25.3}, with high probability, we have
\begin{align*}
&\inf_{z\in\sR^d}\frac{1}{n^d}|G_n\Delta ((n(W_p+z))\cap \sC_\infty)|\\
&\hspace{1cm}\leq \min_{1\leq i\leq M'}\frac{1}{n^d}|G_n\Delta ((n(W_p+z_i))\cap \sC_\infty)|\\
&\hspace{1cm}\leq \min_{\nu \in \widetilde{\cW}}\left\{\sup_{h\in\fF}|\nu(h)-\mu_n(h)| + \sup_{h\in\fF}|\nu(h)-\mu_n(h)|\right\}+\ep +2c(\delta)\\
&\hspace{1cm}\leq 2\min_{\nu \in \widetilde{\cW}}\sup_{h\in\fF}|\nu(h)-\mu_n(h)| +\ep +2c(\delta)\,.
\end{align*}
Let us define for any $\iota>0$,
$$u_g(\iota)=\min \left(\sup\big\{\,\delta>0, \,\forall x,y\in\sR^d, \,\|x-y\|_2\leq \delta \implies |g(x)-g(y)|\leq \iota\,\big\},1\right)\,,$$
$$u_f(\iota)=\min \left(\sup\big\{\,\delta>0, \,\forall x,y\in\sR^d, \,\|x-y\|_2\leq \delta \implies |f(x)-f(y)|\leq \iota\,\big\},1\right)\,$$
and $u=\min(u_f,u_g)$. This function is positive because the function $f$ and $g$ are uniformly continuous.
It is easy to check that $\fF$ satisfies the condition required in Theorem \ref{prel} associated with the function $u$. Thus, there exist positive constants $c_1$ and $c_2$ depending on $p$, $u$, $\ep$ and $d$ such that
$$\Prb \left[\exists G_n\in\cG_n, \,\inf_{\nu\in\cW}\sup_{h\in\fF}|\nu(h)-\mu_n(h)|\geq \ep \,\Big|\,0\in\sC_\infty \right]\leq c_1\e^{-c_2n^{1-3/2d}} \,$$
and so
$$\Prb \left[\exists G_n\in\cG_n, \,\min_{\nu\in\widetilde{\cW}}\sup_{h\in\fF}|\nu(h)-\mu_n(h)|\geq 2\ep \,\Big|\,0\in\sC_\infty \right]\leq c_1\e^{-c_2n^{1-3/2d}} \,.$$
We now choose $\delta$ small enough such that $c(\delta)\leq \ep$ so that 
\begin{align*}
&\Prb\left[\exists G_n \in\cG_n,\, \inf_{z\in\sR^d}\frac{1}{n^d}|G_n\Delta ((n(W_p+z))\cap \sC_\infty)|\geq 7\ep\,\Big|\,0\in\sC_\infty\right]\\
&\hspace{4cm}\leq c_1\e^{-c_2n^{1-3/2d}}+M'C_1\exp(-C_2n)\,.
\end{align*}
As $M'$ is polynomial in $n$, this yields the result.

\paragraph{Acknowledgments} I wish to express my gratitude to Rapha{\"e}l Cerf for showing me this problem and for giving me the opportunity to work with him for an internship. I thank him for our fruitful discussions and his patience. This research was partially supported by the ANR project PPPP (ANR-16-CE40-0016).
\bibliographystyle{plain}
\bibliography{biblio}

\end{document}